\def\lastedit{2010-09-3 by jhr + corr on oct 14 by sg}
\def\mystretch{1.2}
\renewcommand{\baselinestretch}{\mystretch}
\def\pdfoutput{0}
\newtheorem{theorem}{Theorem}
\newtheorem{lemma}{Lemma}
\newtheorem{proposition}{Proposition}
\newtheorem{corollary}{Corollary}
\newtheorem{definition}{Definition}
\newtheorem{remark}{Remark}
\newlength{\blackboxsize} \blackboxsize=1.2ex
\def\blackbox{\hbox{\vrule height  \blackboxsize
width  \blackboxsize depth 0ex}}
\newenvironment{proof}[1][]%
 {\def\proof@temp{#1}\par\noindent
  \textsc{Proof}\ifx\proof@temp\@empty\else\ (#1)\fi\hspace{1em}}
 {~~\hfill\blackbox\par\vspace{.5\baselineskip}}
\makeatletter \@addtoreset{equation}{section}
\def\operatorname#1{\mathop{\operator@font #1}\nolimits}%
\newcommand{\suchthat}{\mathop{\,\vert\,}}
\newcommand{\half}{{\hbox{\normalsize$\frac12$}}}
\newcommand{\C}{\mathbb{C}}
\newcommand{\N}{\mathbb{N}}
\renewcommand{\P}{\mathbb{P}}
\newcommand{\R}{\mathbb{R}}
\newcommand{\Z}{\mathbb{Z}}
\renewcommand{\a}{\mathfrak{a}}
\newcommand{\g}{\mathfrak{g}}
\newcommand{\gl}{\mathfrak{gl}}
\newcommand{\h}{\mathfrak{h}}
\renewcommand{\k}{\mathfrak{k}}
\newcommand{\m}{\mathfrak{m}}
\newcommand{\n}{\mathfrak{n}}
\newcommand{\p}{\mathfrak{p}}
\renewcommand{\sl}{\mathfrak{sl}}
\newcommand{\so}{\mathfrak{so}}
\renewcommand{\sp}{\mathfrak{sp}}
\newcommand{\Sp}{Sp}
\renewcommand{\u}{\mathfrak{u}}
\newcommand{\su}{\mathfrak{su}}
\newcommand{\Ad}{\operatorname{Ad}}
\newcommand{\End}{\operatorname{End}}
\newcommand{\id}{\operatorname{Id}}
\newcommand{\Id}{\operatorname{Id}}
\newcommand{\Ker}{\operatorname{Ker}\,}
\renewcommand{\Im}{\operatorname{Im}\,}
\newcommand{\Span}{\operatorname{Span}}
\newcommand{\Symp}[1]{Sp(\R^{2(#1)},\Omega)}
\newcommand{\T}{\strut^\tau\!}
\newcommand{\Tr}{\operatorname{Tr}}
\let\ccirc\circ
\def\circ{{\ensuremath\ccirc}}
\newcommand{\cyclic}{\mathop{\kern0.9ex{{+}
 \kern-2.15ex\raise-.25ex\hbox{\Large\hbox{$\circlearrowright$}}}}\limits}
\let\ul\underline
\let\ol\overline
\let\ch\cosh \let\sh\sinh
\newcommand{\notationeq}{\stackrel{\hbox{\small{def}}}{=}}
\def\hours{\two@digits\hour}
\def\minutes{\two@digits\minute}
\def\Day{\two@digits\day}\def\Month{\two@digits\month}
\begin{document}\renewcommand{\baselinestretch}{1}

%%
%% Top matter
%%
\title{Transitive Subgroups of Transvections Acting on Some Symplectic
Symmetric Spaces of Ricci Type} %%
\author{~
Michel~Cahen$^1$, Simone Gutt$^{1,2}$, Amin D.  Malik$^1$ and John Rawnsley$^3$\\
\small {\ttfamily mcahen@ulb.ac.be, sgutt@ulb.ac.be, damin@ulb.ac.be, J.Rawnsley@warwick.ac.uk}\\[15pt]
\small $^1$ D\'{e}partement de Math\'{e}matiques\\[-5pt]
\small Universit\'{e} Libre de Bruxelles\\[-5pt]
\small Campus Plaine, CP 218\\[-5pt]
\small Boulevard du Triomphe\\[-5pt]
\small BE -- 1050 Bruxelles\\[-5pt]
\small Belgium\\[10pt]
\small $^2$ D\'{e}partement de Math\'{e}matiques\\[-5pt]
\small Universit\'{e} de Metz\\[-5pt]
\small Ile du Saulcy\\[-5pt]
\small F -- 57045 Metz Cedex 01\\[-5pt]
\small France\\[10pt]
\small$^3$  Mathematics Institute\\[-5pt]
\small Zeeman Building\\[-5pt]
\small University of Warwick\\[-5pt]
\small Coventry\ \ CV4\ \ 7AL\\[-5pt]
\small United Kingdom
~} %%

\date{~\\[10pt]{\small
File: {\sf \jobname.tex}\\[2pt]
\TeX{ed}: {\sf\number\year-\Month-\Day~\hours:\minutes}\\[2pt]
Last edit: {\sf\lastedit}  }}
 \maketitle

\setcounter{page}{0}

\thispagestyle{empty}

\begin{abstract}
Symmetric symplectic  spaces of Ricci type are a class of symmetric
symplectic spaces which can be entirely described by reduction of certain
quadratic Hamiltonian systems in a symplectic vector space. We determine,
in a large number of cases, if such a space admits a subgroup of its
transvection group acting simply transitively. We observe that the simply
transitive subgroups  obtained are one dimensional extensions of the
Heisenberg group.
\end{abstract}

%%
%% To generate a Table of Contents
%%
%\newpage
%\tableofcontents\thispagestyle{empty}

\renewcommand{\baselinestretch}{\mystretch}

\newpage

%***********************************************************************%
\section{Introduction}\label{section:one} 
This paper is devoted to the study of a class of symmetric symplectic
manifolds; those whose canonical connection is of Ricci type. More
precisely, we address the question of determining which of those
manifolds admit a simply transitive subgroup of transvections. This very
particular problem is motivated by quantization; in this case,  one has
at one's disposal  a number of techniques to construct  an invariant
quantization, either formal or convergent (see, for instance,
\cite{bib:AN1,bib:AN2}). When such a transitive subgroup $H$ exists, there is
another isomorphic one which is the image of $H$ by the involutive automorphism
$\sigma$ of the transvection group $G$  ($\sigma$ is conjugation in $G$ by the symmetry 
at a base point of the space). Observe that the  transvection group is generated by $H$ and
$\sigma(H)$; in particular, any object which is invariant under $H$ and
a symmetry will be automatically invariant under the whole of $G$.

The choice of these symmetric symplectic manifolds with Ricci-type
connections has two reasons.  The first one is  that it is one of the
few classes of symmetric symplectic  manifolds which is known and
completely classified. The second is that these symplectic manifolds
with connection are in some sense the analogue in symplectic geometry of
the classical space forms of Riemannian geometry.

A simply connected symmetric symplectic manifold of dimension $2n ~(n\ge
2)$ of Ricci type is entirely determined by the conjugacy class of a
non-zero  element $A$ of the symplectic Lie algebra
$\sp(\R^{2(n+1)},\Omega)$ such that $A^2=\mu \Id.$ More precisely,
connected symmetric symplectic spaces of Ricci-type of dimension $2n~(n\ge
2)$ are quotients of (the universal cover of) (the connected component of)
model manifolds obtained by reduction from the standard symplectic vector
space $(\R^{2(n+1)},\Omega)$ in the following way. Let  $A$ be a non-zero
element of $\sp(\R^{2(n+1)},\Omega)$ (with $n>1$) such that $A^2=\mu \Id$
and such that
\[
    \Sigma_A=\{x\in {\R}^{2(n+1)}\suchthat \Omega(x,Ax)=1\}\neq\emptyset.
\]
The 1-parameter group $\{\exp tA\}$ stabilizes $\Sigma_A$ and one considers
the space $M_A$  of orbits of the group $\{\exp tA\}$ and the canonical projection
\[
\pi : \Sigma_A \rightarrow M_A=\Sigma_A / \{ \exp tAÊ\}.
\]
The space $M_A$ has a manifold structure such that $\pi$ is a smooth
submersion, and is naturally endowed with a``reduced''  symplectic
structure and a ``reduced'' symplectic connection which is the canonical
connection for a natural ``reduced'' symmetric space structure.

Our main result can be described in terms of this characteristic element.

\begin{theorem} 
Let $0\neq A \in \sp(\R^{2(n+1)},\Omega)$ be   such that
$\Sigma_A\neq\emptyset$ and $A^2=\mu \Id$.
\begin{enumerate}
\item\label{musup0} If $\mu>0$, the space of orbits $M_A$ is diffeomorphic
to $TS^n$ (hence is always connected and simply connected when $n>1$) and
never admits a simply transitive subgroup of the transvection group.
\item\label{muinf0} If $\mu<0$, the symmetric bilinear form
$g(X,Y):=\Omega(X,AY)$ has $2p$ positive eigenvalues with $1\le p \le n+1;$
 the symmetric space  $M_A$  is connected and simply connected; it is
diffeomorphic to $\C^n$ if $p=1$ and to a complex vector bundle of rank
$n+1-p$ over $\P^{p-1}(\C)$ if $p>1$. It admits a simply transitive
subgroup of the transvection group if and only if $p=1$; non-isomorphic
simply transitive subgroups arise in this case.
\item\label{mueg0} If $\mu=0$, let $p ~(1\le p\le n+1)$ be the rank of $A$
and let $q ~(1\le q\le p)$ be the number of positive eigenvalues of the
symmetric bilinear form $g(X,Y):=\Omega(X,AY).$ The symmetric space $M_A$
is diffeomorphic to $T(S^{q-1}\times {\R}^{p-q})\times {\R}^{2(n+1-p)}$. If
$p=1$, each of the two connected components of $M_A$ is diffeomorphic to
the flat symplectic vector space and hence admits a simply transitive
subgroup of the transvection group. If $q\ne 1, 2 $ or $4,$ $M_A$ does not
admit a simply transitive subgroup of the transvection group. If $p=2$,
 $M_A$ admits a simply transitive subgroup of
the transvection group if and only if $q=1.$
\end{enumerate}
\end{theorem}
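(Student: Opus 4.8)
The plan is to recast the question in Lie-theoretic terms and then settle the cases by a mixture of topology and the (non)existence of a complementary subalgebra. Let $G$ be the transvection group of $M_A$, $\sigma$ the symmetry at a base point $o$, $K=G^\sigma$, and $\g=\k\oplus\m$ the associated symmetric decomposition, so that $\m\cong T_oM_A$ has dimension $2n$ and $\k=[\m,\m]$. Normalising $\mu\in\{-1,0,1\}$, the pair $(\g,\k)$ is read off explicitly from $A$: for $\mu>0$ one gets $\g\cong\sl(n+1,\R)$ with $\k\cong\gl(n,\R)$; for $\mu<0$ one gets $\g\cong\su(p,n+1-p)$ with $\k$ the isotropy algebra of the Hermitian symmetric space $SU(p,n+1-p)/S(U(1)\times U(p-1,n+1-p))$; and for $\mu=0$, $\g$ is the transvection algebra of the reduced nilpotent model, with a Heisenberg-type nilradical and a reductive part built from $\so(q,p-q)$. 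A connected subgroup $H\le G$ acts simply transitively on $M_A$ if and only if its Lie algebra $\mathfrak h$ is a subalgebra, $\g=\mathfrak h\oplus\k$, and the $H$-orbit of $o$ is all of $M_A$; writing a vector-space complement of $\k$ as the graph $\mathfrak h_\ell=\{X+\ell X:X\in\m\}$ of a linear map $\ell\colon\m\to\k$, the subalgebra requirement is the quadratic equation
\[
\ell\bigl([X,\ell Y]+[\ell X,Y]\bigr)=[X,Y]+[\ell X,\ell Y]\qquad(X,Y\in\m).
\]
Thus the theorem amounts to deciding, in each case, whether this equation has a solution and, if so, whether the corresponding subgroup has a single orbit.

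Next I would eliminate the cases in which $M_A$ is not even diffeomorphic to a Lie group. A connected Lie group is diffeomorphic to the product of a maximal compact subgroup $C$ (a compact connected Lie group) with a Euclidean space; a compact connected Lie group has $\chi=0$ unless it is trivial, and the only ones that are rational homotopy spheres are $S^1$, $SU(2)$ and $SO(3)$. A simply transitive $H$ would force $M_A$ to be diffeomorphic, hence homotopy equivalent, to such a $C$. Using the diffeomorphism types in the statement: for $\mu>0$ and $n\ne3$, $M_A\simeq S^n$ is impossible ($\chi(S^{2k})=2$, and $S^{2k+1}$ with $k\ge2$ is not a rational homotopy sphere of a Lie group); for $\mu<0$ and $p>1$, $M_A$ retracts onto $\P^{p-1}(\C)$, where $\chi=p\ge2$; for $\mu=0$ and $q\notin\{1,2,4\}$, $M_A$ retracts onto $S^{q-1}$ with $q-1\notin\{0,1,3\}$. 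This settles case (1) except $n=3$, case (2) except $p=1$, and the assertion ``$q\ne1,2,4$'' of case (3).

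For the positive cases I would argue by explicit construction. For $\mu=0$, $p=1$ the model is flat and each component, being a symplectic vector space, is acted on simply transitively by its translation group. For $\mu<0$, $p=1$ one has $M_A\cong\C H^n$, $\g\cong\su(1,n)$, and $\k$ is a maximal compact subalgebra; the Iwasawa decomposition $\g=\k\oplus\a\oplus\n$ has $\dim\a=1$ and $\n$ the $(2n-1)$-dimensional Heisenberg algebra, so $\mathfrak h=\a\oplus\n$ is a subalgebra with $\g=\mathfrak h\oplus\k$ solving the quadratic equation, and the closed simply connected solvable group $AN$ acts simply transitively; other solutions $\ell$ of the quadratic equation give further, pairwise non-isomorphic solvable complements, accounting for the non-isomorphic simply transitive subgroups. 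For $\mu=0$, $p=2$, $q=1$ the form $g$ has signature $(1,1)$ on the reduced plane, so the distinguished semisimple direction of $\g$ acts on the Heisenberg nilradical with real eigenvalues; the analogous ``$\a\oplus\n$''-recipe then produces a closed solvable complement $\mathfrak h$ with $\g=\mathfrak h\oplus\k$ acting simply transitively. In each positive case the group is a one-dimensional extension of a Heisenberg group, as anticipated in the introduction.

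Two cases remain, $\mu>0$ with $n=3$ and $\mu=0$ with $p=2$, $q=2$, where the topology is permissive and everything rests on the quadratic equation and the induced action. For $\mu=0$, $p=2$, $q=2$ the form $g$ has signature $(2,0)$, so the distinguished semisimple direction now acts on the Heisenberg nilradical by a genuine rotation (purely imaginary eigenvalues); one shows that no subalgebra complement $\mathfrak h$ of $\k$ can yield a subgroup projecting onto all of $M_A$ — the obstruction being exactly this compactness feature, which is what reverses the answer compared with $q=1$. For $\mu>0$, $n=3$ ($\g=\sl(4,\R)$, $\k=\gl(3,\R)$) one must rule out every $6$-dimensional subalgebra $\mathfrak h$ with $\g=\mathfrak h\oplus\k$ whose group acts transitively; organising by the Levi type of $\mathfrak h$ (trivial, $\sl(2,\R)$, $\su(2)$, or one of the four $6$-dimensional semisimple real algebras) and exploiting the $\k$-module structure $\m\cong\R^3\oplus(\R^3)^*$ to constrain $\ell$, one checks in each case that either no such complement exists or the corresponding subgroup — for instance $SL(2,\C)$ acting $\C$-linearly on $\R^4$ — fails to be transitive, its orbits lying in the level sets of a non-constant invariant. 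I expect this final step, the elimination of complementary, transitively acting subalgebras of $\sl(4,\R)$ where no topological shortcut is available, to be the main obstacle.
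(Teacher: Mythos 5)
Your skeleton---recasting simple transitivity as the existence of a subalgebra complementary to the isotropy algebra $\k$, a topological sieve via maximal compact subgroups, the Iwasawa group $AN$ when $\mu<0$, $p=1$, and an algebraic analysis of complements when $\mu=0$, $p=2$---is essentially the paper's strategy. But the two decisive negative cases are left unproved, and they are where the real content lies. For $\mu>0$, $n=3$ you only announce an enumeration of all $6$-dimensional subalgebras of $\sl(4,\R)$ complementary to $\gl(3,\R)$, organised by Levi type, and you flag it yourself as the main obstacle: that is a genuine gap, and your plan does not even use the strongest available constraint, namely that $H$ must be diffeomorphic to $TS^3$ and hence have maximal compact subgroup $SU(2)$. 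The paper closes this case by exactly that route: the compact three-dimensional subgroups of $SL(4,\R)$ form two conjugacy classes ($SU(2)_L$ and the diagonal $SO(3)$), so one may assume $\su(2)_L\subset\mathfrak{h}$; decomposing $\sl(4,\R)=\so(4)\oplus\p$ with $\p\cong\R^3\otimes\R^3$ under $SU(2)_L\times SU(2)_R$ and applying Schur's lemma forces $\mathfrak{h}=\su(2)_L\oplus(\su(2)_L\otimes w)$ for a single vector $w$; and then the element $\eta(tw,w)\in\mathfrak{h}$ is shown to lie in the isotropy algebra $\gl^+(3,\R)$ of the base point, so the $H$-orbit has dimension at most $5<6$, a contradiction. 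Without this (or a genuinely completed version of your case analysis, including solvable and $\sl(2,\R)$ Levi types with an explicit transitivity obstruction in each branch), item (1) is not established for $n=3$.

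The case $\mu=0$, $p=2$, $q=2$ is likewise only a heuristic in your write-up. The paper parametrises all vector-space complements of $\k=[\p_1,\p_1]/\R A$ by data $(B,\tilde a,\tilde b,\tilde c,a,c)$ and imposes closure under the bracket: the $q'P$-terms force $\tilde c=0$ and $B^2=-\epsilon\Id$, and the $pq'-p'q$-terms then force $\epsilon=-1$, i.e.\ $q=1$. So for $q=2$ no complementary subalgebra exists at all; the failure is algebraic and occurs before any question of orbits or of a subgroup ``projecting onto $M_A$'', so the appeal to purely imaginary eigenvalues is a slogan, not an argument. Smaller but real gaps of the same kind: for $\mu=0$, $p=2$, $q=1$ you assert global simple transitivity of the solvable complement, whereas the paper verifies it by computing the fundamental vector fields in explicit coordinates on $M_{Ao}\cong\R^{2n}$ and using $B^2=\Id$ (so $\ch\gamma\,\Id+\sh\gamma\,B$ is invertible) to show they are everywhere independent, hence orbits are open and connectedness gives transitivity; and for $\mu<0$, $p=1$ the statement that ``other solutions $\ell$'' give non-isomorphic simply transitive subgroups needs the paper's explicit construction $A_\varphi N$, with $\varphi:\a\to\m$ a homomorphism into the centraliser of $\a$ in $\k$, together with the proof that $K\times A_\varphi\times N\to G$ is a global decomposition and that $A_\varphi N$ is closed.
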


Let us mention that all simply transitive subgroups obtained here are 
$1$-dimensional extensions of the Heisenberg group of dimension $2n-1$.
The case of a sovable transvection group, i.e.when $A^2=0$ and $p=2$, 
was considered in 2006 in the m\'emoires de DEA of Amin Malik and 
Yannick Voglaire and was the framework of the quantization scheme developed by Bieliavsky in \cite{bib:AN2}.

The paper is organised as follows: in Section \ref{section:two}, we recall
definitions and known results concerning symplectic symmetric spaces of
Ricci-type. In particular, we describe  the manifold structure and the
transvection group of each $M_A$. Section \ref{section:3} is a summary of 
the facts about the topology of Lie groups which we need in the later
sections. We prove property \ref{musup0}  in Section \ref{section:musup0},
property \ref{muinf0} in Section \ref{section:muinf0} and property
\ref{mueg0} in Section \ref{section:mueg0}.

%***********************************************************************%
\section{Symmetric symplectic spaces whose curvature is of Ricci
type}\label{section:two}
We recall in this section known results which can be found for instance in 
the review \cite{bib:Ghent}.

\begin{definition}
A smooth symplectic manifold $(M,\omega)$ is a {\bf symmetric symplectic
space}  if there are symmetries, that is if there exists a smooth map
\[
s:M\times M\to M\
(x,y)\mapsto s(x,y)=s_xy
\] such that each ``symmetry''  $s_x$ squares to
the identity ($s_x^2=\id_{M}$), has $x$ as  isolated fixed point,  and
is a symplectic diffeomorphism ($s_x^*\omega=\omega$), such that
$s_xs_ys_x=s_{s_xy} \,\forall x,y\in M$. The {\bf canonical symmetric
connection}, $\nabla$, is defined by 
\[ 
\omega_x(\nabla_XY,Z) = \frac12 X_x\omega(Y+s_{x_*}Y,Z),\quad
    X,Y,Z\in \chi(M);
\]
it is symplectic (i.e. torsion-free and such that $\nabla\omega=0$) and
invariant by all symmetries.

A symmetric symplectic space $(M,\omega,s)$ is a homogeneous space
$M=G/K$ where $G$ is the group generated by products of even number
of symmetries ($G$ is called {\bf the transvection group}) and $K$
is the stabilizer in $G$ of a certain point $o$ (chosen as base
point).
\end{definition}
Symmetric symplectic spaces are not classified in general, but the
following cases are known:
\begin{enumerate}
  \item% [i)] 
  those whose transvection group $G$ is semisimple \cite{bib:sstransgrp};
  \item% [ii)] 
  those whose curvature of the canonical connection is
  of Ricci type (see  definition \ref{def:Riccitype} below) \cite{bib:CMF};
  \item% [iii)] 
  a class of symplectic symmetric spaces with nilpotent
  transvection group appearing as symmetric subspaces of a
  symplectic vector space \cite{bib:extrinsic};
  \item% [iv)] 
  symmetric symplectic spaces of dimension 2 and 4
  \cite{bib:sstransgrp}.
\end{enumerate}
\begin{definition}
Let $(V,\nu)$ be a symplectic vector space of dimension $2n$. An {\bf
algebraic symplectic curvature tensor} $R$ on $(V,\nu)$ is an element of
$\Lambda^2V^*\otimes S^2V^*$ such that
\[
    \cyclic_{X,Y,Z}R(X,Y,Z,T)=0
\]
where $  \cyclic_{X,Y,Z}$ denotes the sum over cyclic permutations of $X,Y$
and $Z$.
\end{definition}
The space $\mathcal{R}$ of algebraic symplectic curvature tensors on
$(V,\nu)$ splits (if $n\geq2$) into two subspaces which are irreducible
under the natural action of the symplectic group $Sp(V,\nu)$,
\cite{bib:Vaisman}. One writes:
\[
    \mathcal{R}=\mathcal{E}+\mathcal{W}.
\]
The $\mathcal{E}$ component of the curvature tensor $R$ can be
expressed in terms of the Ricci tensor $r$ associated to $R$. If
$R(X,Y)$ is the endomorphism of $(V,\nu)$ defined by
\[
    R(X,Y,Z,T)=\nu(R(X,Y)Z,T)
\]
the Ricci curvature associated to $R$ is the element of $S^2V^*$
\[
    r(X,Y)=\Tr(Z\mapsto R(X,Z)Y).
\]
The $\mathcal{E}$ component of the curvature $R$  has the form
\begin{eqnarray*}
  E(X,Y,Z,T) &=& \frac{-1}{2(n+1)}[2\nu(X,Y)r(Z,T)+\nu(X,Z)r(Y,T)
  +\nu(X,T)r(Y,Z)\\
   &~& \qquad\mbox{}-\nu(Y,Z)r(X,T)-\nu(Y,T)r(X,Z)].
\end{eqnarray*}

\begin{definition}\label{def:Riccitype}
Let $(M,\omega,\nabla)$ be a smooth symplectic manifold of
dimension $2n(n\geq2)$ endowed with a symplectic connection
$\nabla$. The connection $\nabla$ is said to be of {\bf Ricci type}
if, $\forall x\in M$, the curvature is of the form $R_x=E_x$.
\end{definition}
The following lemma is a direct consequence of the definitions.

\begin{lemma} Let $(M,\omega,\nabla)$ be a connected smooth
symplectic manifold of dimension $2n,n\geq2$ endowed with a smooth
symplectic connection $\nabla$ of Ricci type. Then the curvature
endomorphism $R(X,Y)$ is given by:
\begin{equation}\label{Ricci}
R_x(X,Y) = {\textstyle{\frac1{2n+2}}}(2\omega_x(X,Y)\rho_x
 + \rho_x Y\otimes \ul{X}+ X\otimes \ul{\rho_x Y}
 - \rho_x X\otimes \ul{Y}-Y\otimes \ul{\rho_x X})
\end{equation}
where $\rho$ is the Ricci endomorphism, i.e.
\[
    \omega(X,\rho Y)=r(X,Y)
\]
and where, for $X\in\chi(M)$, $\ul{X}=\omega(X,\,\cdot\,)$. 
Furthermore, there
exists a vector field $U$ such that 
\begin{equation}\label{U}
    \nabla_X\rho=\frac{-1}{2n+1}(X\otimes\ul{U}+U\otimes\ul{X});
\end{equation}
there exists a function $f$ such that:
\[
    \nabla_XU=-\frac{2n+1}{2(n+1)}\rho^2X+fX;
\]
and there exists a constant $K$ such that
\begin{equation}\label{K}
    \Tr\rho^2+\frac{4(n+1)}{2n+1}f=K.
\end{equation}
\end{lemma}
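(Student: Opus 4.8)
The plan is to handle the four assertions in order: the first is pointwise algebra, the other three rest on the second Bianchi identity, the Ricci commutation identity, and connectedness of $M$. I would begin by recording that $\rho_x$ is skew with respect to $\omega_x$: since $r\in S^2V^*$ one has $r(X,Y)=\omega(X,\rho Y)=r(Y,X)=\omega(Y,\rho X)=-\omega(\rho X,Y)$, so $\rho_x\in\sp(T_xM,\omega_x)$, and since $\nabla\omega=0$ the tensor $\nabla_X\rho$ is again $\sp$-valued. For the curvature formula $(\ref{Ricci})$ I would substitute $\nu=\omega$ and the Ricci tensor $r$ of $\nabla$ into the expression for the $\mathcal E$-component $E(X,Y,Z,T)$, use $R(X,Y,Z,T)=\omega(R(X,Y)Z,T)$ and nondegeneracy of $\omega$ to read off the endomorphism $R(X,Y)$, and rewrite each term with $r(Y,Z)=\omega(Y,\rho Z)=-\omega(\rho Y,Z)$ and $\ul X=\omega(X,\cdot)$; the two summands of $E$ that carry $T$ inside the Ricci factor each change sign under this rewriting, which is what turns the global $\frac{-1}{2(n+1)}$ into $\frac1{2n+2}$. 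This step is routine.

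For $(\ref{U})$ I would invoke the second Bianchi identity $\cyclic_{X,Y,Z}(\nabla_ZR)(X,Y)=0$. Since the curvature is of Ricci type, $(\ref{Ricci})$ and $\nabla\omega=0$ show that $(\nabla_ZR)(X,Y)$ is the right-hand side of $(\ref{Ricci})$ with $\rho$ replaced by $\nabla_Z\rho$, so the Bianchi identity becomes a linear relation on the $3$-tensor $t(Z,X,Y):=\omega((\nabla_Z\rho)X,Y)$, which is symmetric in $X,Y$ by the opening remark. Contracting this relation over a suitable pair of indices expresses one $\omega$-trace of $\nabla\rho$ as a $1$-form; defining $U$ (up to a normalising constant) by $\omega(U,\cdot)=-\sum_i\ul{e^i}\bigl((\nabla_{e_i}\rho)\,\cdot\,\bigr)$ for a symplectic frame $\{e_i\}$ and feeding this back into the contracted relation forces all remaining components of $t$ to vanish, which is exactly $(\ref{U})$. (Alternatively one may decompose $V^*\otimes S^2V^*$ under $Sp(V,\omega)$ and note that Bianchi annihilates all summands but the vectorial one.)

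For the formula for $\nabla_XU$ I would differentiate $(\ref{U})$: at a point, in a frame parallel there, $\nabla^2_{Y,X}\rho=\frac{-1}{2n+1}\bigl(X\otimes\ul{\nabla_YU}+(\nabla_YU)\otimes\ul X\bigr)$; antisymmetrising in $X,Y$ and invoking the Ricci identity for the $(1,1)$-tensor $\rho$, $\nabla^2_{Y,X}\rho-\nabla^2_{X,Y}\rho=[R(Y,X),\rho]$, produces an identity equating the $\End$-valued $2$-form $[R(\cdot,\cdot),\rho]$ --- computed from $(\ref{Ricci})$ via $\rho\circ(A\otimes\ul B)=(\rho A)\otimes\ul B$ and $(A\otimes\ul B)\circ\rho=-A\otimes\ul{\rho B}$ --- to the $2$-form built from $\nabla U$. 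The summands of $R$ proportional to $\omega(\cdot,\cdot)\rho$ commute with $\rho$ and drop out; matching the rest and contracting to isolate the $(1,1)$-tensor $\nabla U$ gives $\nabla_XU=-\frac{2n+1}{2(n+1)}\rho^2X+fX$, where the pure-trace part is undetermined by the equation (scalars lie in its kernel) and is christened $f$; smoothness of $f$ follows from $2nf=\Tr(\nabla U)+\frac{2n+1}{2(n+1)}\Tr\rho^2$.

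For $(\ref{K})$ I would set $g:=\Tr\rho^2+\frac{4(n+1)}{2n+1}f$ and show $dg=0$, whence $g\equiv K$ by connectedness of $M$. One has $X(\Tr\rho^2)=2\Tr\bigl((\nabla_X\rho)\rho\bigr)$, which equals $\frac{-4}{2n+1}\,r(U,X)$ by $(\ref{U})$; and $X(f)$ is obtained by differentiating the formula for $\nabla_XU$, applying the Ricci identity for the vector field $U$ together with $\nabla_X\rho^2=(\nabla_X\rho)\rho+\rho(\nabla_X\rho)$, $(\ref{U})$, and the value of $R(\cdot,\cdot)U$ read off from $(\ref{Ricci})$, after which a contraction collapses everything to $X(f)=\frac1{n+1}\,r(U,X)$ --- precisely the value that makes $X(\Tr\rho^2)+\frac{4(n+1)}{2n+1}X(f)=0$. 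The hard part is none of these identities but the bookkeeping: pinning down the two constants $-\frac{2n+1}{2(n+1)}$ and $\frac{4(n+1)}{2n+1}$ demands scrupulous control of the sign conventions --- skew-$\omega$-adjointness of $\rho$, the musical map $\ul{\cdot}$, the composition rules for rank-one operators $A\otimes\ul B$, and the chosen sign in the Ricci identity --- and every contraction in the last three steps must be carried out in a fixed symplectic frame; otherwise the input (Bianchi, Ricci, connectedness) is entirely standard.
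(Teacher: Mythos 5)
The paper does not actually prove this lemma: it is stated as ``a direct consequence of the definitions'' and the results of Section~\ref{section:two} are recalled from the cited review \cite{bib:Ghent}, so there is no in-paper argument to compare yours with. Judged on its own terms, your outline is the standard derivation from that literature and its key points check out: (\ref{Ricci}) is indeed just the $\mathcal{E}$-component rewritten through $\omega$ and $\rho$; (\ref{U}) comes from the second Bianchi identity applied to $R=E[\rho]$, with $U$ a suitably normalised $\omega$-trace of $\nabla\rho$ (equivalently, the representation-theoretic remark that only the vectorial summand of $\nabla\rho$ survives); the formula for $\nabla_XU$ comes from antisymmetrising $\nabla^2\rho$ against the Ricci identity $[R(\cdot,\cdot),\rho]$; and your closedness computation for (\ref{K}) uses exactly the right intermediate identities, $X(\Tr\rho^2)=-\tfrac{4}{2n+1}r(U,X)$ and $X(f)=\tfrac1{n+1}r(U,X)$ (the latter does follow from the Ricci identity for $U$ together with $R(\cdot,\cdot)U$ read off from (\ref{Ricci}), and then $dg=0$ plus connectedness gives the constant $K$). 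Two caveats. First, your parenthetical explanation of the sign change in step one is garbled: three of the five summands of $E$ carry $T$ inside the Ricci factor, and in fact \emph{every} term flips sign exactly once when you rewrite (three via $r(\cdot\,,T)=-\omega(\rho\,\cdot\,,T)$, two via $r(\cdot\,,\cdot)=-\ul{\rho\,\cdot}(\cdot)$), which is precisely why the single global prefactor $\tfrac{-1}{2(n+1)}$ becomes $\tfrac1{2n+2}$; as you state it, the bookkeeping would not balance. Second, steps two and three are asserted rather than carried out (``contracting forces the remaining components to vanish'', ``matching and contracting gives the coefficient $-\tfrac{2n+1}{2(n+1)}$''); these contractions are exactly where the constants are produced, so a complete write-up must do them explicitly, but the plan as described is correct and is the same route taken in the sources the paper cites.
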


\begin{corollary} Two Ricci type connections on a real analytic 
connected symplectic manifold $(M,\omega)$ coincide if they have the same values 
$(\rho_o,U_o)$ of the Ricci endomorphism and of the
vector field $U$ at a point $o$ of $M$ (with $U$ defined be \ref{U}) and the same constant $K$ defined by
\ref{K}. 
\end{corollary}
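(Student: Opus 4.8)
The plan is to read off from the Lemma that, for a Ricci type connection, every iterated covariant derivative of the curvature at $o$ is a universal polynomial in $\rho_o$, $U_o$ and $K$ (``universal'' meaning: the same polynomial for every Ricci type connection of dimension $2n$), and then to recover the connection from this tower of tensors using real analyticity.

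First I would invoke the standard fact that, in $\nabla$-geodesic normal coordinates centred at $o$, the Christoffel symbols of a real analytic connection are explicit universal power series in the components (in a fixed symplectic frame at $o$) of $R_o,\ (\nabla R)_o,\ (\nabla^{2}R)_o,\dots$ (the normal-coordinate / Jacobi-field expansion); hence two real analytic Ricci type connections with the same sequence $(\nabla^{k}R)_o$ coincide on a normal neighbourhood of $o$, and it suffices to pin that sequence down. By (\ref{Ricci}), $R$ is a fixed linear function of the Ricci endomorphism $\rho$, and since $\nabla\omega=0$ the same linear function carries $(\nabla^{k}\rho)_o$ to $(\nabla^{k}R)_o$. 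By (\ref{U}), $\nabla\rho$ is linear in $U$; by the Lemma $\nabla U=-\frac{2n+1}{2(n+1)}\rho^{2}+f\,\Id$, and by (\ref{K}) $f=\frac{2n+1}{4(n+1)}\bigl(K-\Tr\rho^{2}\bigr)$, so $\nabla U$ is a polynomial in $\rho$ and the constant $K$, while $\nabla f$ is a polynomial in $\rho$ and $\nabla\rho$, hence in $\rho$ and $U$. Feeding these identities into one another and differentiating, a straightforward induction on $k$ shows that $(\nabla^{k}\rho)_o$ and $(\nabla^{k}U)_o$, and therefore each $(\nabla^{k}R)_o$, are universal polynomials in $\rho_o$, $U_o$ and $K$. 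Since the two connections share this data, all their iterated curvatures at $o$ agree, and thus $\nabla$ and $\nabla'$ coincide near $o$.

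To extend this to all of $M$ I would propagate along paths issuing from $o$: the difference $\nabla-\nabla'$, which (both connections being torsion free and symplectic) is a section of $S^{3}T^{*}M$, together with $\rho-\rho'$ and $U-U'$, satisfies --- by the closed first order system of the Lemma --- an analytic linear first order differential system whose data vanish near $o$; by uniqueness it vanishes identically, and $M$ being connected this gives $\nabla=\nabla'$. I expect the step requiring the most care to be precisely this local-to-global passage, together with making the rigidity input exact: matching the $(\nabla^{k}R)_o$ yields, a priori, only a local affine symplectomorphism fixing $o$ and intertwining the two connections (Cartan--Ambrose--Hicks), so one really has to lean on the real analyticity of $M$ and of the connections, on the connectedness of $M$, and on the fact that $\rho$ and $U$ are globally defined tensor fields obeying the propagation equations of the Lemma in order to turn this into a genuine coincidence.
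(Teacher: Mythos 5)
The paper itself offers no proof of this corollary (it is stated as an immediate consequence of the Lemma), so your attempt has to be judged on its own terms. Its first half is sound and is surely the intended content: using (\ref{Ricci}), (\ref{U}), the equation $\nabla_XU=-\frac{2n+1}{2(n+1)}\rho^2X+fX$ and (\ref{K}) (which expresses $f$ polynomially in $\rho$ and $K$), an induction does show that every $(\nabla^k\rho)_o$, $(\nabla^kU)_o$ and hence every $(\nabla^kR)_o$ is a universal expression in $(\rho_o,U_o,K)$.

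The genuine gap is exactly at the step you flag, and it cannot be repaired the way you propose. Equality of the two curvature towers at $o$ gives, in the analytic category, only a local affine symplectomorphism $\phi$ with $\phi(o)=o$, $d\phi_o=\Id$ and $\phi^*\nabla'=\nabla$: your normal-coordinate argument compares each connection in its own normal chart, so it proves precisely this and no more. The proposed patch --- that $\nabla-\nabla'$, $\rho-\rho'$, $U-U'$ satisfy a closed analytic linear first-order system with vanishing data --- does not exist: the identities of the Lemma prescribe only the curvature, i.e. an antisymmetrized combination of first derivatives of the Christoffel difference, never its full first derivative, so there is no determined system along paths to which a uniqueness theorem applies; the uncontrolled freedom is exactly the diffeomorphism gauge. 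Indeed literal coincidence is more than the pointwise data can give: on flat $\R^{2n}$ ($n\ge2$) with the standard $\omega$, a Ricci type space with $\rho=0$, $U=0$, $K=0$, let $\phi$ be the time-one Hamiltonian flow of $(x^1)^3$; it is an analytic symplectomorphism fixing $o=0$ with $d\phi_o=\Id$, and $\phi^*\nabla^0$ is torsion-free, symplectic and flat, hence Ricci type with the same data $(0,0,0)$ at $o$, yet $\phi^*\nabla^0\neq\nabla^0$ because $\phi$ is not affine. So what your (correct) induction actually establishes is uniqueness of the germ of $(M,\omega,\nabla)$ at $o$ up to a local symplectomorphism fixing $o$ to first order; that is the form in which the statement is used later (Corollary \ref{2.3} and the classification by the conjugacy class of $\rho_o$), where the residual gauge freedom disappears because the connection is further required to be the canonical connection of a fixed symmetric structure $s$. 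Equality of the two connections on all of $M$ cannot be reached by your route, and you should reformulate the conclusion accordingly rather than try to force the local-to-global propagation.
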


\begin{corollary}\label{2.3} Let $(M,\omega,s)$ be a connected symplectic
symmetric space of dimension $2n$ whose canonical connection is of
Ricci type. Then the vector field $U$ vanishes; the function $f$ is
a constant,
$
f=\frac{2n+1}{4(n+1)^2}K
$,
and the Ricci endomorphism is such that
\[
    \rho^2=\frac{K}{2(n+1)}\Id.
\]
Furthermore given the value $\rho_o$ of $\rho$ at a base point $o$
of $M$, the canonical connection $\nabla$ of $(M,\omega,s)$ is
uniquely determined.
\end{corollary}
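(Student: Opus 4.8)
The plan is to derive everything from one structural fact: on a symmetric space the canonical connection is locally symmetric, so that its Ricci endomorphism $\rho$ is parallel; once $\nabla\rho=0$ is available, the three displayed identities of the Lemma become purely algebraic, and the uniqueness assertion is exactly the preceding Corollary. So first I would prove $\nabla\rho=0$. By construction the canonical connection is invariant under every symmetry $s_x$, and each $s_x$ is symplectic; hence $s_x$ preserves every tensor built naturally from $\omega$ and $\nabla$ --- in particular $R$, its Ricci tensor $r$, the endomorphism $\rho$ (defined from $r$ and $\omega$), and therefore $\nabla\rho$. Because $s_x$ is an affine involution having $x$ as isolated fixed point, its differential at $x$ must be $-\Id_{T_xM}$: a nonzero $(+1)$-eigenvector would exponentiate to a geodesic of fixed points through $x$. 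Since $\nabla\rho$ is a tensor with an odd number of slots, $s_x^{*}(\nabla\rho)_x=-(\nabla\rho)_x$; comparing with $s_x^{*}(\nabla\rho)=\nabla\rho$ gives $(\nabla\rho)_x=0$, and $x$ was arbitrary.

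Next I would feed $\nabla\rho=0$ into the structure equations of the Lemma. Equation (\ref{U}) becomes $X\otimes\ul{U}+U\otimes\ul{X}=0$ for all $X$; applying the endomorphism $X\otimes\ul{U}+U\otimes\ul{X}$ to a vector $Z$ and specialising $X:=U$ gives $2\,\omega(U,Z)\,U=0$ for all $Z$, so $U\equiv0$ since $\omega$ is nondegenerate. Then $\nabla_XU=0$, and the second equation of the Lemma reduces to $\frac{2n+1}{2(n+1)}\rho^{2}X=fX$ for every $X$, i.e. $\rho^{2}=\frac{2(n+1)}{2n+1}f\,\Id$. Taking the trace over the $2n$-dimensional tangent space gives $\Tr\rho^{2}=\frac{4n(n+1)}{2n+1}f$, and substituting into (\ref{K}) gives $\frac{4(n+1)^{2}}{2n+1}f=K$, so $f=\frac{2n+1}{4(n+1)^{2}}K$ is constant and $\rho^{2}=\frac{K}{2(n+1)}\Id$. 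For the uniqueness I would invoke the preceding Corollary, using that a symmetric space carries a canonical real-analytic structure for which $\omega$ and $\nabla$ are analytic: we have just shown $U_o=0$, and $\rho_o^{2}=\frac{K}{2(n+1)}\Id$ exhibits $K$ as determined by $\rho_o$, so the data $(\rho_o,U_o,K)$ that pin down a Ricci-type connection are all recovered from $\rho_o$ alone.

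I expect the only nonroutine point to be the first step, namely knowing that the canonical connection of a symmetric space is locally symmetric --- equivalently, combining the parity argument with the facts that $s_x$ is affine for $\nabla$ and that its differential at $x$ equals $-\Id$. After that the argument is bookkeeping with the Lemma plus a reference to the previous Corollary; the only point requiring a little care is the real-analyticity remark that makes the uniqueness statement legitimate.
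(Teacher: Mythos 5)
Your proof is correct, and it follows exactly the route the paper intends: the paper states this corollary without proof, and the expected argument is precisely yours --- symmetry-invariance of $\nabla$ and $\rho$ together with $d(s_x)_x=-\Id$ gives $\nabla\rho=0$, which fed into (\ref{U}), the equation for $\nabla_XU$ and (\ref{K}) yields $U=0$, $f=\frac{2n+1}{4(n+1)^2}K$ and $\rho^2=\frac{K}{2(n+1)}\Id$, with uniqueness then coming from the preceding corollary since $U_o=0$ and $K$ is recovered from $\rho_o$. Your computations (including the trace over the $2n$-dimensional tangent space) check out, and flagging the real-analyticity needed to invoke the preceding corollary is an appropriate touch.
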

%%%%%%%%%%%%%

Let $(M_i,\omega_i,s_i),i=1,2$ be connected, simply-connected,
symmetric symplectic spaces of Ricci-type; let $o_i\in M_i$ and let
$\Psi:T_{o_1}M_1\to T_{o_2}M_2$ be a linear isomorphism such that
$\Psi^*\omega_2=\omega_1$ and $\Psi\circ\rho_1=\rho_2\circ\Psi$,
where $\rho_i$ is the Ricci endomorphism of the canonical connection
of $(M_i,\omega_i,s_i)$. Then $\Psi$ extends to a global symplectic
diffeomorphism $\widetilde{\Psi}:(M_1,\omega_1)\to(M_2,\omega_2)$
such that $\widetilde{\Psi}\,\circ\, s^{(1)}_{x}=
s^{(2)}_{\tilde{\Psi}(x)}\circ \,\widetilde{\Psi}$, i.e.
these two symmetric spaces are isomorphic. In view of Corollary
\ref{2.3}, we have an injective map of the set of isomorphism
classes of connected, simply-connected symmetric symplectic spaces
of Ricci-type and of dimension $2n$ into the set of conjugacy classes of
elements $\rho$ of the symplectic algebra $\sp(n,{\R})$ whose square is a
multiple of the identity (under the adjoint action of $Sp(n,{\R})$).
Indeed, recall that the Ricci endomorphism $\rho_o$ at a point $o$ of
$(M,\omega)$ is such that
\[
    r_o(X,Y)=\omega_o(X,\rho_oY)=r_o(Y,X)=\omega_o(Y,\rho_oX)
\]
and hence $\rho_o\in \sp(T_oM,\omega_o)$.
The examples which follow show that this map is also
surjective.

\bigskip

%%%%%%%%%%%%%
Let $(\R^{2(n+1)},\Omega)$ be the standard symplectic vector space of
dimension $2(n+1)$ and let $o\neq A$ be an element of
$\sp(n+1,{\R})$ such that $A^2=\mu \Id$ and such that
\[
    \Sigma_A=\{x\in {\R}^{2(n+1)}\suchthat \Omega(x,Ax)=1\}\neq\emptyset.
\]
Then $\Sigma_A$ is an embedded $2(n+1)$-dimensional submanifold of
${\R}^{2(n+1)}$. The group
$\exp tA$ stabilizes $\Sigma_A$ and has no fixed point in
$\Sigma_A$. The space $M_A$ of orbits of the group $\exp tA$ has a
shape depending on the sign of $\mu$:
\begin{itemize}
  \item[i)] if $\mu=k^2$, $k>0$, $\exp tA=\ch kt\ I+\frac{1}{k}\sh kt\
  A$;
  \item[ii)] if $\mu=-k^2$, $k>0$, $\exp tA=\cos kt\ I+\frac{1}{k}\sin kt\
  A$;
   \item[iii)] if $\mu=0$, $\exp tA= \Id+tA$.
\end{itemize}
%%%%%%%%%%%%%

\noindent$\bullet$ In the first case ($\mu=k^2$), $A$ admits $\pm k$ as
eigenvalues and
\[
    {\R}^{2(n+1)}=V^+\oplus V^-,\quad A|_{V^\pm}=\pm k\Id
\]
with $V^\pm$ lagrangian subspaces. In a basis $\{e_i,e'_i\suchthat1\leq i\leq
n+1\}$ adapted to this decomposition and such that
$\Omega(e_i,e'_j)=\delta_{ij}$, we can write
\[
x = x^++x^-, \quad\textrm{ so that }Ê\quad 
  \Omega(x,Ax) = -2k<x_+,x_->
\]
where $<,>$ is the standard scalar product on ${\R}^{n+1}$.
The orbit of $x_0=x_0^++x_0^-$ is
\[
    x(t)=e^{kt}x_0^++e^{-kt}x_0^-.
\]
An element  $x_0$ is in $\Sigma_A$ if and only if
\[
    <x_0^+,x_0^->=-\frac{1}{2k}
\]
and we  can choose a unique  element in its orbit, $x(t)$ such that
\[
    <x(t)^+,x(t)^->=-\frac{1}{2k},\quad <x(t)^+,x(t)^+>=1
\]
so that $M_A = \{(u,v)\suchthat<u,u>=1,<u,v>=-\frac{1}{2k}\}$ can be seen
as  an embedded submanifold. The map $\pi:\Sigma_A\to M_A$ given by
$(x_+,x_-)\mapsto(u,v):=(\frac{x_+}{<x_+,x_+>^{1/2}},<x_+,x_+>^{1/2}x_-)$
is a surjective submersion. The manifold  $M_A$ can be identified with
$TS^n = \{(u,w)\suchthat <u,u>=1 \,  <u,w>=0\}$ via
\[
    w=v+\frac{1}{2k}u.
\]

%%%%%%%%%%%%
\bigskip

\noindent$\bullet$ In the second case ($\mu=-k^2$), $A/k=J$ is a complex
structure compatible with $\Omega$, $\Omega(Jx,Jy)=\Omega(x,y)$, and the
corresponding bilinear form $g(x,y) = \Omega(x,Ay)$ has signature
$(2p,2q)$ where $p+q = n+1$ (if $g(x,x) \ne 0$, then $g(x,x)$ and
$g(Ax,Ax)$ have the same sign). Thus there exists a basis $\{e_j,f_j
\suchthat 1 \le j \le n+1\}$ and a diagonal matrix $\Id_{pq}=
\mathrm{diag}
(\epsilon_1,\dots,\epsilon_{n+1})$, where $\epsilon_r = 1$, $r \le p$
and $\epsilon_r = -1$, $r > p$ such that
\[
\Omega(e_i,f_j) = \epsilon_i\delta_{ij}, \qquad A e_l = k f_l, \qquad A
f_l = -k e_l.
\]
The hypersurface $\Sigma_A = \{u \suchthat \Omega(u,Au) = 1\}$ has equation
\[
x\cdot\Id_{pq} x + y\cdot\Id_{pq} y = 1
\]
if $u= \sum_i x^i e_i + y^i f_i$, $x=(x^1,\dots,x^{n+1})$,
$y=(y^1,\dots,y^{n+1})$. To have $\Sigma_A$ non-empty we must have
$p \ge 1$.

The group $\exp tA$ acts
by
\[
x \mapsto \cos kt\, x - \sin kt \, y,
\qquad y \mapsto \sin kt \, x + \cos kt \, y.
\]
If $p=1$, $\Sigma_A$ is diffeomorphic to $S^1\times\R^{2n}$ and the
quotient manifold is diffeomorphic to $\C^{n}$. If $1<p<n+1$, $\Sigma_A$
is diffeomorphic to $S^{2p-1} \times \R^{2q}$ and the quotient manifold
$M_A$ can be identified with a complex vector bundle of rank $q$ over
$\P_{p-1}( \mathbb{C})$. If $p=n+1$, $\Sigma_A$ is diffeomorphic to
$S^{2n+1}$ and the quotient manifold $M_A$ is $\P_n( \mathbb{C})$.

\bigskip

%%%%%%%%%%%%%%%%%

\noindent$\bullet$ In the third case ($\mu=0$), let $V=\Im A$; then $\Ker
A=(\Im A)^\bot\supset \Im A$. Let $p=\dim V,1\leq p\leq n+1$; let $W$ be
an arbitrary subspace of $\Ker A$, supplementary to $V$; then $W$ is
symplectic and $W^\bot=V\oplus V^*$, where $V^*$ is a lagrangian
subspace of $W^\bot$ in duality with $V$. Choose a basis $\{e_i, i\leq
p\}$ of $V$,$\{e_i^*, i\leq p\}$ of $V^*$ and $\{f_a,a\leq2(n+1-p)\}$ of
$W$ such that
\[
Ae_i^*=e_i,\quad  \Omega(e_i^*,e_j)=\epsilon_i\delta_{ij},\quad
\epsilon_i=\left\{\begin{array}{ll}1 
&\mbox{for } 1\le j\le q\\-1 &\mbox{for } q<j \le p\end{array}\right. .
\]
Denote by $\Omega^0$ the restriction of
$\Omega$ to the symplectic vector space $W: \quad
\Omega^0_{ab}=\Omega(f_a,f_b).$

If $u=\sum_{i=1}^p x^ie_i+\sum_{a=1}^{2(n+1-p)}
X^af_a+\sum_{j=1}^px_*^{j}e_j^*\notationeq x+X+x^*$, then
\[
    \Omega(u,Au)=\sum^p_{i=1}\epsilon_i(x_*^{i})^2.
\]
Since $q$ is the number of indices $i$ such that $\epsilon_i=1$, 
$\Sigma_A$ is not empty iff $1\leq q$.
The orbit of a  point $(x_o, X_0, x_{*0})$
under the action of $\exp tA$ has the form:
\[
\exp tA\cdot (x_o+X_o+x_{o*})=(x_o+tx_{o*},X_o,x_{o*})
=: (x_o(t),X_o(t),x_{o*}(t)).
\]
Hence, for any point in $\Sigma_A$:
\[
\sum^p_{i=1}\epsilon_ix^i(t)x_*^{i}(t)=\sum^p_{i=1}\epsilon_ix^i(0)x_*^{i}(0)+t
\]
so that each orbit of $\exp tA$ in $\Sigma_A$ contains a unique point
satisfying
\[
    \sum^p_{i=1}\epsilon_ix^i(t)x_*^{i}(t)=0, \qquad
  \left(\textrm{namely for }\ \   t=-\sum^p_{i=1}\epsilon_ix^i(o)x_*^{i}(0)\right).
\]
The projection $\pi:\Sigma\to M_A:(x,X,x_*)\mapsto
(x-(\sum^p_{i=1}\epsilon_ix^ix_*^{i})x_*,X,x_*)$ is a surjective submersion
on the space of orbits which is identified to the submanifold of
$\R^{2n+2}$ defined by
\[
    \sum^p_{i=1}\epsilon_ i(x_*^{i})^2=1, \qquad
    \sum^p_{i=1}\epsilon_ix^ix_*^{i}=0.
\]
Hence $M_A$  is diffeomorphic to $T(S^{q-1}\times {\R}^{p-q})\times
{\R}^{2(n+1-p)}$. If $q=1$, the manifold has two connected
components, each diffeomorphic to ${\R}^{2n}$. If $q=p$, $M_A$
is diffeomorphic to $TS^{p-1}\times {\R}^{2(n+1-p)}$.

%%%%%%%%%%%%
\begin{lemma} \label{lem 2.4} 
Let $(\R^{2(n+1)},\Omega)$ be the standard symplectic vector space of
dimension $2(n+1)$; let $o\neq A$ be an element of $\sp(n+1,\R)$ such
that
\begin{itemize}
  \item [i)] $A^2=\mu \Id$, $\mu\in \R$;
  \item [ii)] $\Sigma_A=\{x\in \R^{2(n+1)}\suchthat \Omega(x,Ax)=1\}\neq
  \emptyset$
\end{itemize}
then $M_A=\Sigma_A/\exp tA$ has a canonical structure of a smooth
manifold of dimension $2n$ and the canonical map $\Sigma\to
\Sigma/\exp tA$ is a smooth submersion. Furthermore
\begin{itemize}
\item if $\mu=k^2$, $M_A$ is diffeomorphic to $TS^n$, hence is connected
and simply connected for $n>1$;
\item if $\mu=-k^2$,  the signature of the symmetric bilinear form
$g(X,Y):=\Omega(X,AY)$ has $2p$ positive eigenvalues with $1\le p \le n+1;$
the symmetric space  $M_A$  is connected and simply connected; if $p=1$, it
is diffeomorphic to $\C^n$; if $1<p<n+1$  it is diffeomorphic to a complex
vector bundle of rank $n+1-p$ over $\mathbb{P}^{p-1}$; and if $p=n+1$, it
is diffeomorphic to $\mathbb{P}_n(\mathbb{C})$;
\item if $\mu=0$, let $p ~(1\le p\le n+1)$ be the rank of $A$ and let $q
~(1\le q\le p)$ be the number of positive eigenvalues in the symmetric
bilinear form $g(X,Y):=\Omega(X,AY).$ The space $M_A$ is diffeomorphic to
$T(S^{q-1}\times {\R}^{p-q})\times {\R}^{2(n+1-p)}$. If $q=1$, $M_A$ has
two connected components diffeomorphic to ${\R}^{2n}$. The space $M_A$ is
connected if $q>1$ and simply connected if $q>2.$
\end{itemize}
\end{lemma}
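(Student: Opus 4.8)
The plan is to establish all three bullet points at once, by a case analysis on the sign of $\mu$, in each case exhibiting an explicit smooth section of the orbit projection $\pi\colon\Sigma_A\to M_A$ --- a canonical representative in each orbit of $\{\exp tA\}$. Such a section at once shows that $M_A$ carries a smooth manifold structure of dimension $2n$ for which $\pi$ is a submersion, and realises $M_A$ as a concretely described embedded submanifold of $\R^{2(n+1)}$ from which the diffeomorphism type and the topology can be read off. I shall use two facts already recorded above: that $\{\exp tA\}$ acts freely on $\Sigma_A$, and the explicit form of $\exp tA$ in each case. When $\mu\geq0$ the section is a genuine global slice --- it meets each orbit exactly once and transversally --- which immediately provides the manifold structure and makes $\pi$ a submersion, since the retraction of $\Sigma_A$ onto the slice is smooth by the implicit function theorem; when $\mu<0$ the orbits are circles, so instead one uses that a free action of the compact group $S^1$ makes $\pi$ a principal circle bundle.

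I would first dispose of the cases $\mu=k^2>0$ and $\mu=0$. For $\mu=k^2$, writing $x=x_++x_-$ in the eigenspace splitting gives $\Omega(x,Ax)=-2k<x_+,x_->$, so $\Sigma_A=\{<x_+,x_->=-\frac1{2k}\}$; in particular $x_+\neq0$, hence $<x_+,x_+>\,>0$, and as $<e^{kt}x_+,e^{kt}x_+>\,=e^{2kt}<x_+,x_+>$ increases strictly from $0$ to $+\infty$ as $t$ runs over $\R$, each orbit meets the slice $<x_+,x_+>\,=1$ exactly once. This identifies $M_A$ with $\{(u,v)\suchthat<u,u>=1,\ <u,v>=-\frac1{2k}\}$, and the substitution $w=v+\frac1{2k}u$ carries it onto $TS^n$. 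For $\mu=0$, in the coordinates $u=x+X+x^*$ one has $\Omega(u,Au)=\sum_{i=1}^p\epsilon_i(x_*^i)^2$, so $\Sigma_A\neq\emptyset$ iff $q\geq1$; since $\exp tA$ acts here by $x^i\mapsto x^i+t\,x_*^i$ (fixing $X$ and the $x_*^i$), one has $\sum_{i=1}^p\epsilon_ix^i(t)x_*^i(t)=\sum_{i=1}^p\epsilon_ix^i(0)x_*^i(0)+t$ along each orbit, so the slice $\sum_{i=1}^p\epsilon_ix^ix_*^i=0$ is met exactly once and the projection formula displayed above provides the section. Hence $M_A$ is the submanifold $\{\sum_{i=1}^p\epsilon_i(x_*^i)^2=1,\ \sum_{i=1}^p\epsilon_ix^ix_*^i=0\}$ of $\R^{2n+2}$; reordering coordinates so that the first equation reads $\sum_{i\leq q}(x_*^i)^2-\sum_{q<i\leq p}(x_*^i)^2=1$ exhibits the $x_*$-locus as $S^{q-1}\times\R^{p-q}$, the second equation --- linear in $x$ for fixed $x_*$ --- cuts out the tangent bundle of that locus, and the free variable $X$ contributes an $\R^{2(n+1-p)}$ factor, so $M_A\cong T(S^{q-1}\times\R^{p-q})\times\R^{2(n+1-p)}$.

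For $\mu=-k^2<0$ I would pass to the compatible complex structure $J=A/k$ and identify $(\R^{2(n+1)},J)$ with $\C^{n+1}$; then $\Sigma_A=\{z\in\C^{n+1}\suchthat\sum_{i\leq p}|z_i|^2-\sum_{i>p}|z_i|^2=1\}$, nonempty iff $p\geq1$, and $\exp tA$ acts as the diagonal circle $z\mapsto e^{ikt}z$, freely; hence $M_A$ is a smooth $2n$-manifold and $\pi$ is a principal circle bundle. For the diffeomorphism type I would construct an $S^1$-equivariant deformation retraction of $\Sigma_A$ onto $\{(z_1,\dots,z_p,0,\dots,0)\suchthat\sum_{i\leq p}|z_i|^2=1\}\cong S^{2p-1}$ --- shrinking the last $q=n+1-p$ coordinates to $0$ while renormalising the first $p$ --- under which the circle action becomes the Hopf action, and then identify $M_A$ with the total space of the rank-$q$ complex vector bundle over $S^{2p-1}/S^1=\P^{p-1}(\C)$ associated to the Hopf bundle and the standard action of $S^1$ on $\C^q$. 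This yields $\C^n$ when $p=1$, a rank-$q$ complex vector bundle over $\P^{p-1}(\C)$ when $1<p<n+1$, and $\P^n(\C)$ when $p=n+1$.

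Finally, the connectivity assertions follow by inspecting the models. For $\mu>0$, $TS^n$ deformation-retracts onto $S^n$, which is connected and simply connected for $n>1$. For $\mu<0$, $\C^n$ and $\P^n(\C)$ are simply connected, and a complex vector bundle over $\P^{p-1}(\C)$ deformation-retracts onto its simply connected base, so $M_A$ is always connected and simply connected. For $\mu=0$, $S^{q-1}$ is two points when $q=1$, so $M_A$ then has two components, each $\cong\R^{2n}$; it is connected when $q\geq2$ and simply connected when $q\geq3$, and since the remaining factors are Euclidean, $M_A$ is connected for $q>1$ and simply connected for $q>2$. I expect the one genuinely delicate step to be the subcase $\mu<0$, $1<p<n+1$: arranging the retraction compatibly with the circle action and, above all, verifying that the quotient really is the total space of a rank-$q$ complex vector bundle over $\P^{p-1}(\C)$, and not merely a space homotopy equivalent to it. Everything else is a one-line verification --- the manifold and submersion claims, once the global slice, respectively the free $S^1$-action, is available --- or a direct reading-off from the explicit models.
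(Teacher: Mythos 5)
Your proposal is correct and follows essentially the same route as the paper, whose ``proof'' of this lemma is precisely the preceding explicit constructions: the global slices $\langle x_+,x_+\rangle=1$ (for $\mu=k^2$) and $\sum_i\epsilon_i x^i x_*^i=0$ (for $\mu=0$) realising $M_A$ as an embedded submanifold diffeomorphic to $TS^n$, respectively $T(S^{q-1}\times\R^{p-q})\times\R^{2(n+1-p)}$, and for $\mu=-k^2$ the identification via the complex structure $J=A/k$ of $\Sigma_A$ with $S^{2p-1}\times\R^{2q}$ carrying the diagonal circle action, whose quotient is the rank-$(n+1-p)$ complex vector bundle over $\P^{p-1}(\C)$ associated to the Hopf fibration. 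Your extra care about equivariance of the retraction in the case $1<p<n+1$ is exactly the point the paper leaves implicit, and the connectivity statements are read off from the models in both treatments.
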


We now show  that on any such $M_A=\Sigma_A/\exp tA$, there exists a
symplectic structure $\omega$ and a connection $\nabla$ which is
symplectic, of Ricci type, and locally symmetric.

Let $\pi:\Sigma_A\to M_A=\Sigma_A/\exp tA$ be the natural projection; let
$x\in \Sigma_A$ and $y=\pi(x)$.
The tangent space to $\Sigma_A$ is
\[
    T_x\Sigma_A=\{Y\in T_x{\R}^{2(n+1)}\suchthat \Omega(Y,Ax)=0\}.
\]
Observe that $\forall x\in\Sigma_A$, the vector space ${\R}x$ is
transversal to $\Sigma_A$; we have an orthogonal decomposition:
\[
T_x{\R}^{2(n+1)}=T_x\Sigma_A\oplus\R x=\, \Span\{x,Ax\}\oplus \Span\{x,Ax\}^\bot.
\] 
 The subspace
$H_x = {} \Span\{ x,Ax\}^\bot$ is symplectic and $\pi_{*_x}:H_x\to T_yM_A$
is a linear isomorphism. Since $\exp tA:x\mapsto\exp tA.x$ maps
$H_x$ on $H_{\exp tAx}$, one may define a $2-$form on $M_A$ by
\[
\omega_y(X,Y)=\Omega_x(\ol{X},\ol{Y})
\]
where $X,Y\in T_yM_A, \ol{X},\ol{Y}\in H_x$ and
$\pi_*\ol{X}=X, \pi_*\ol{Y}=Y$. One checks readily that
this $2-$ form is indeed symplectic.
Let $\nabla^0$ be the standard flat symplectic connection on
$(\R^{2(n+1)},\Omega)$; define  (as in \cite{bib:BaguisCahen})
\[
    \ol{\nabla_XY}(x)=\nabla^0_{\ol{X}}\ol{Y}
    -\Omega(A\ol{X},\ol{Y})x+\Omega(\ol{X},\ol{Y})Ax.
\]
This is a linear connection on $M_A$, which is torsion free and has the
property that $\nabla\omega=0$ and hence is symplectic.

The curvature of this connection is given by:
\begin{eqnarray}\label{2.20}
  \ol{R(X,Y)Z} &=& -2\Omega(\ol{X},\ol{Y})A\ol{Z}-
  \Omega(\ol{X},\ol{Z})A\ol{Y}+\Omega(\ol{Y},\ol{Z})A\ol{X}\nonumber \\
    && \mbox{}+\Omega(A\ol{X},\ol{Z})\ol{Y}-\Omega(A\ol{Y},\ol{Z})\ol{X}.
\end{eqnarray}
Comparing \ref{Ricci} and \ref{2.20} one gets
\begin{eqnarray*}
  R(X,Y) &=& -\frac{1}{2(n+1)}[-2\omega(X,Y)(-2(n+1)\widetilde{A})
 + 2(n+1)\widetilde{A}Y\otimes \ul{X} \\
 && -2(n+1)\widetilde{A}X\otimes  \ul{Y}
    +X\otimes2(n+1) \ul{\widetilde{A}Y}-Y\otimes2(n+1) \ul{\widetilde{A}X}]
\end{eqnarray*}
where $\widetilde{A}(\in \End\ T_yM_A)$ is defined by
\[
    (\widetilde{A}X)_y=\pi_{*_x}(A\ol{X})_x.
\]
In particular this shows that the canonical connection is of Ricci
type and that the Ricci endomorphism is:
\[
    \rho=-2(n+1)\widetilde{A}.
\]
Since $\widetilde{A}\nabla_XY=\nabla_X\widetilde{A}Y$, we have
$\nabla_X\rho=0$ and hence the spaces $M_A$ are locally symmetric.
%%%%%%%

\begin{lemma} Let $0\neq A$ be an element of $\sp(n+1,{{\R}})$ such that
$A^2=\mu \Id$ and $\Sigma_A\neq\emptyset$. Then the orbit
space $M_A=\Sigma_A/\exp tA$ is a locally symmetric symplectic
manifold of dimension $2n$.
\end{lemma}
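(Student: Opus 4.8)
The plan is to assemble into a single proof the constructions already carried out in the discussion preceding the statement. First I would invoke Lemma~\ref{lem 2.4}, which gives that $M_A = \Sigma_A/\exp tA$ is a smooth manifold of dimension $2n$ with $\pi : \Sigma_A \to M_A$ a smooth submersion; so only the symplectic form, the connection, and the local symmetry remain to be produced. For the symplectic form I would use the transversal splitting $T_x\R^{2(n+1)} = \Span\{x,Ax\}\oplus H_x$ with $H_x = \Span\{x,Ax\}^\bot$: since $\pi_{*x}|_{H_x}\colon H_x \to T_{\pi(x)}M_A$ is an isomorphism and $\exp tA$ carries $H_x$ onto $H_{\exp tA\,x}$ while preserving $\Omega$, the formula $\omega_y(X,Y)=\Omega_x(\ol X,\ol Y)$, with $\ol X,\ol Y\in H_x$ the horizontal lifts, does not depend on the chosen point $x$ in the fibre $\pi^{-1}(y)$. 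Non-degeneracy is immediate because $H_x$ is a symplectic subspace, and closedness is a short check, for instance by extending lifts to $\exp tA$-invariant vector fields and using $d\Omega=0$ together with invariance under the flow.

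Next I would treat the connection. I would check that the right-hand side of
\[
\ol{\nabla_X Y}(x) = \nabla^0_{\ol X}\ol Y - \Omega(A\ol X,\ol Y)\,x + \Omega(\ol X,\ol Y)\,Ax
\]
indeed lies in $H_x$ (the two correction terms are exactly what is needed to kill the components along $x$ and $Ax$ of the flat derivative), that it is equivariant under $\exp tA$, and that it therefore descends to a well-defined linear connection $\nabla$ on $M_A$. Torsion-freeness follows from that of $\nabla^0$ together with the antisymmetry of the correction in $(\ol X,\ol Y)$, and $\nabla\omega = 0$ from a direct computation using $\nabla^0\Omega = 0$ and $A\in\sp(n+1,\R)$.

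Then I would compute the curvature, obtaining formula~\ref{2.20}, and compare it term by term with the Ricci-type expression~\ref{Ricci}. Reading off the Ricci endomorphism of the $R$ given by~\ref{2.20} yields $\rho = -2(n+1)\wt A$, where $\wt A$ is the endomorphism field of $TM_A$ induced by $A$, and with this $\rho$ formula~\ref{2.20} is precisely~\ref{Ricci}; hence $R_x = E_x$ for every $x$, i.e. $\nabla$ is of Ricci type. Finally, since $A$ is a fixed linear endomorphism of $\R^{2(n+1)}$ one has $\wt A\,\nabla_X Y = \nabla_X \wt A Y$, so $\nabla\rho = 0$; because by~\ref{Ricci} the curvature tensor is built algebraically out of $\omega$ and $\rho$ alone, $\nabla\rho = 0$ forces $\nabla R = 0$. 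A torsion-free connection with parallel curvature is locally affinely symmetric, and since $\omega$ is $\nabla$-parallel the local geodesic symmetries are symplectomorphisms; thus $(M_A,\omega,\nabla)$ is a locally symmetric symplectic manifold of dimension $2n$.

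I expect the main obstacle to be the two well-definedness verifications — that $\omega$ and the connection formula are genuinely independent of the choice of horizontal lift and invariant along the orbits of $\exp tA$ — together with the curvature computation leading to~\ref{2.20}; once~\ref{2.20} is in hand, the identification with Ricci type and the parallelism of $\rho$ (hence of $R$) are essentially bookkeeping.
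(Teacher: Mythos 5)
Your proposal is correct and follows essentially the same route as the paper, whose ``proof'' of this lemma is precisely the construction immediately preceding it: the reduced form $\omega$ via the horizontal spaces $H_x=\Span\{x,Ax\}^\bot$, the connection $\ol{\nabla_XY}=\nabla^0_{\ol X}\ol Y-\Omega(A\ol X,\ol Y)x+\Omega(\ol X,\ol Y)Ax$, the curvature formula~\ref{2.20} compared with~\ref{Ricci} to get $\rho=-2(n+1)\widetilde A$, and $\widetilde A\nabla_XY=\nabla_X\widetilde A Y$ giving $\nabla\rho=0$, hence local symmetry. You merely make explicit the routine well-definedness and descent checks that the paper leaves implicit.
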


\begin{lemma} The manifold $M_A=\Sigma_A/\exp tA$ is a globally
symmetric symplectic manifold and its canonical connection is $\nabla$.
\end{lemma}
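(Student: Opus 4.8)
The plan is to upgrade the \emph{locally} symmetric symplectic manifold $(M_A,\omega,\nabla)$ to a \emph{globally} symmetric one by exhibiting the global symmetries explicitly, using the linear symmetries available on the ambient symplectic vector space $(\R^{2(n+1)},\Omega)$. For a point $y\in M_A$ choose $x\in\Sigma_A$ with $\pi(x)=y$. The key observation is that the linear map $S_x$ on $\R^{2(n+1)}$ defined as $+\Id$ on $\Span\{x,Ax\}$ and $-\Id$ on the symplectic complement $H_x=\Span\{x,Ax\}^\bot$ is an element of $\Sp(\R^{2(n+1)},\Omega)$ (it is $-1$ times a linear symplectic involution on $H_x$, extended by the identity on the complementary symplectic plane, hence symplectic), it fixes $x$, and it commutes with $A$ on $H_x$ up to sign in a way compatible with the $\exp tA$-action — more precisely one checks $S_x$ descends through $\pi$ because it preserves $\Sigma_A$ and intertwines the $\{\exp tA\}$-orbit structure. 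This yields a well-defined smooth map $s_x\colon M_A\to M_A$.

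The steps, in order, are as follows. First I would verify that $S_x$ indeed preserves $\Sigma_A$: since $\Omega(S_xu,AS_xu)$ must equal $\Omega(u,Au)=1$, which follows because $S_x$ is symplectic and $S_x A S_x^{-1}$ agrees with $A$ in the relevant terms (one has to be a little careful here, checking that $A$ maps $H_x$ to $H_x$ and $\Span\{x,Ax\}$ to itself, using $A^2=\mu\Id$). Second, I would check that $S_x$ is compatible with the $\{\exp tA\}$-action so that it passes to a map $s_x$ on the quotient; since on $H_{x}$ we have $S_x=-\Id$ and $\exp tA$ maps $H_x$ to $H_{\exp tA\, x}$, the composite behaves correctly on orbits. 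Third, from the defining formula $\overline{S_x\overline{X}}$ and the construction of $\omega$ and $\nabla$, it is immediate that $s_x^*\omega=\omega$ and that $s_x$ preserves $\nabla$ (because $S_x$ is linear symplectic and the correction terms in $\overline{\nabla_XY}$ are built from $\Omega$ and $A$, both $S_x$-compatible on $H_x$), and that $(s_x)_{*_y}=-\Id$ on $T_yM_A$, so $y$ is an isolated fixed point and $s_x^2=\id$. Fourth, since a symplectic connection-preserving map with $(s_x)_{*}=-\Id$ at a fixed point $y$ is, on a \emph{connected} manifold, the unique \emph{affine} symmetry (geodesic symmetry) at $y$, and since $\nabla$ is complete — which I would get from $M_A$ being a quotient of a complete object, or directly from the fact that geodesics of $\nabla$ are images under $\pi$ of curves in $\Sigma_A$ obtained from the flat structure — we conclude $s_x$ is globally defined as the geodesic symmetry, independent of the choice of $x$ over $y$. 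Fifth, the relation $s_xs_ys_x=s_{s_xy}$ follows from the corresponding relation at the level of geodesic symmetries of a complete affine connection, or can be checked directly from the linear description. Finally, by Corollary \ref{2.3} and the uniqueness statement preceding Lemma \ref{lem 2.4}, the canonical connection of the symmetric space $(M_A,\omega,s)$ — which is determined by $\rho_o$ and $K$ — must coincide with $\nabla$, since $\nabla$ is of Ricci type with the same Ricci endomorphism.

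The main obstacle I anticipate is the careful bookkeeping in showing $S_x$ preserves $\Sigma_A$ and descends to the quotient: one must track how $A$ interacts with the decomposition $\R^{2(n+1)}=\Span\{x,Ax\}\oplus H_x$ in each of the three cases $\mu=k^2$, $\mu=-k^2$, $\mu=0$ (where the span may degenerate, e.g. when $Ax$ is proportional to $x$, though $\Omega(x,Ax)=1\ne0$ rules out $Ax\in\R x$, so the plane is genuinely $2$-dimensional), and confirm the $S_x$-invariance of the correction terms $-\Omega(A\overline X,\overline Y)x+\Omega(\overline X,\overline Y)Ax$ in the formula for $\overline{\nabla_XY}$. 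The completeness of $\nabla$, needed to globalize the local symmetry, is the other point requiring care, but it follows cleanly because $\Sigma_A$ is an embedded submanifold of a vector space and the $\nabla$-geodesics lift to curves with a globally defined second-order ODE on $\Sigma_A$. An alternative, perhaps cleaner, route to completeness and to the whole statement is to identify $M_A=\Sigma_A/\{\exp tA\}$ as a Marsden--Weinstein reduction and invoke that the transvection group of the model (which acts on $\Sigma_A$ as the stabilizer of $A$ in $\Sp(\R^{2(n+1)},\Omega)$, or a suitable subgroup thereof) acts transitively, so that $M_A$ is homogeneous and the local symmetry at one point transports to all points; I would mention this but carry out the direct argument since it keeps the paper self-contained.
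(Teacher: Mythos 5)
Your proposal is correct and follows essentially the same route as the paper: your map $S_x$ (equal to $+\Id$ on $\Span\{x,Ax\}$ and $-\Id$ on $\Span\{x,Ax\}^{\bot}$) is exactly the paper's explicit $S_xy=-y+2\Omega(y,Ax)x-2\Omega(y,x)Ax$, which is symplectic, commutes with $A$ \emph{exactly} (being $\pm\Id$ on $A$-invariant subspaces, so no ``up to sign'' subtlety arises), hence stabilizes $\Sigma_A$ and descends to symplectic affine symmetries $s_{\pi(x)}$ of $(M_A,\omega,\nabla)$. The extra verifications you list (well-definedness on orbits, $s_x^2=\id$, $s_xs_ys_x=s_{s_xy}$, and the identification of $\nabla$ with the canonical connection) are exactly what the paper leaves to the reader, so the two arguments coincide in substance.
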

\begin{proof} Let $x\in \Sigma_A$ and let $y\in \R^{2(n+1)}$; if
\[
S_xy:=-y+2\Omega(y,Ax)x-2\Omega(y,x)Ax.
\]
Then $S_x$ belongs to $Sp(n+1,{{\R}})$ and commutes with
$A$, hence stabilizes $\Sigma_A$. If $\pi:\Sigma_A\to M_A$ is the
canonical projection, define:
\[
    s_{\pi(x)}\pi(y)=\pi(S_xy) \quad y\in\Sigma_A.
\]
This is well defined as the right hand side does not depend on the
choice of $x $(resp.{} $y$) in the fibre over $\pi(x)$ (resp.{}
$\pi(y)$). The diffeomorphism $s_{\pi(x)}$ of $M_A$ is symplectic and
one checks that it is affine (for the canonical connection $\nabla$
on $M_A$). Hence the conclusion.
\end{proof}

\begin{theorem} 
Let $(N,\nu,s)$ be a symmetric symplectic space of
dimension $2n$, $n\geq2$, whose curvature is of Ricci type. Then there
exists $A\neq 0$ in $\sp(n+1,\R)$ such that
$A^2=\mu \Id$ and  $\Sigma_A=\{x\in
  \R^{2n+2}\suchthat \Omega(x,Ax)=1\}\neq\emptyset$ and $(N,\nu,s)$
  is locally isomorphic to the symmetric symplectic space
  $(M_A=\Sigma_A/\exp tA,\omega,s)$.
  
If $(N,\nu,s)$ is connected, then its universal cover is globally
isomorphic to (the universal cover of) (a connected component of) $M_A$.
\end{theorem}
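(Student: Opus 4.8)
The plan is to reconstruct the characteristic element $A$ from the infinitesimal data of $(N,\nu,s)$ at a base point $o$, and then invoke the earlier uniqueness results (Corollary \ref{2.3} and the isomorphism statement following it) to identify $(N,\nu,s)$ with one of the model spaces $M_A$. First I would set $V = T_oN$ with the symplectic form $\nu_o$, and take $\rho_o\in\sp(V,\nu_o)$ to be the Ricci endomorphism of the canonical connection at $o$. By Corollary \ref{2.3} we know $\rho_o^2 = \lambda\,\Id_V$ for the constant $\lambda = \frac{K}{2(n+1)}$. The idea is to use $\rho_o$ to build, on the $(2n{+}2)$-dimensional space $\widehat V := V\oplus\R^2$, an endomorphism $A$ with $A^2 = \mu\,\Id$ and $\Sigma_A\neq\emptyset$, whose reduced Ricci endomorphism is exactly $\rho_o$ up to the normalization $\rho = -2(n+1)\widetilde A$ found above. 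Concretely, in the computation leading to $\rho = -2(n+1)\widetilde A$ one sees how $A$ acts on $H_x\cong V$; so the construction of $A$ amounts to extending $\frac{-1}{2(n+1)}\rho_o$ from $V$ to $\widehat V$ by prescribing its action on the extra plane $\Span\{x,Ax\}$ in a way compatible with $A^2 = \mu\Id$, choosing $\mu$ according to the sign of $\lambda$ and scaling so that $\Sigma_A$ is nonempty (which forces at least one positive eigenvalue of $g(X,Y)=\Omega(X,AY)$, automatically arranged since $\rho_o\in\sp$).

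The key steps, in order, are: (1) normalize and record the infinitesimal invariants $(\rho_o, U_o, K)$ of $(N,\nu,s)$ at $o$, noting $U_o = 0$ and $f$ constant by Corollary \ref{2.3}; (2) exhibit an explicit $A\in\sp(n{+}1,\R)$ with $A^2=\mu\Id$ and $\Sigma_A\neq\emptyset$ whose associated model $M_A$ has, at a chosen base point, Ricci endomorphism conjugate to $\rho_o$ — this is where the "examples which follow show that this map is also surjective" remark is made precise, running the normal-form analysis of $A$ (cases $\mu=k^2$, $\mu=-k^2$, $\mu=0$) in reverse against the Jordan/conjugacy type of $\rho_o$; (3) apply the isomorphism result stated just before the $M_A$ construction: since both $(N,\nu,s)$ and (the relevant simply-connected cover of a component of) $M_A$ are connected simply-connected Ricci-type symmetric spaces with conjugate Ricci endomorphisms at base points, a symplectic linear isomorphism intertwining the two $\rho$'s extends to a global isomorphism of symmetric spaces; (4) deduce the local isomorphism for general $(N,\nu,s)$, and for connected $(N,\nu,s)$ pass to universal covers, using that covering maps are local isomorphisms of the symmetric-space structure and that $\widetilde\Psi$ lifts uniquely.

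The main obstacle is step (2): matching conjugacy classes. One must check that \emph{every} conjugacy class of $\rho\in\sp(V,\nu_o)$ with $\rho^2$ a scalar actually arises from some admissible $A$, i.e. that the reduction construction is surjective onto such classes, and that the nonemptiness condition $\Sigma_A\neq\emptyset$ (equivalently, the positivity condition on $g$) can always be met by an appropriate choice of $\mu$'s sign and scaling — in the $\mu<0$ and $\mu=0$ cases this interacts with the signature data $(p,q)$, so the bookkeeping of eigenvalue signs of $\rho_o$ versus those of $g_A$ must be done carefully. A secondary point requiring care is the "connected component" clause: when $M_A$ is disconnected ($q=1$ in the $\mu=0$ case, or the $\mu=k^2$ hyperboloid), one should verify that the two components are isomorphic as symmetric symplectic spaces, so that it does not matter which one $(N,\nu,s)$'s universal cover maps onto; this follows from an explicit symplectomorphism of $\Sigma_A$ exchanging the components and commuting with $\exp tA$, e.g. $x\mapsto -x$, which descends to $M_A$.
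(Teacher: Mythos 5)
Your proposal is correct and follows essentially the same route as the paper: reconstruct $A$ from the Ricci endomorphism $\rho_o$ at a base point via the normalization $\rho=-2(n+1)\widetilde{A}$, extending $-\frac{1}{2(n+1)}\rho_o$ to $T_oN\oplus\R^2$, and then invoke Corollary \ref{2.3} and the global-extension statement for simply connected Ricci-type spaces. The only difference is that the paper writes $A$ down explicitly in block form, with the extra $2\times 2$ block $\left(\begin{array}{cc}0&\frac{\mu}{4(n+1)^2}\\ 1&0\end{array}\right)$, so that $e_0\in\Sigma_A$ automatically and the case-by-case conjugacy-class matching you flag as the main obstacle in your step (2) is not needed.
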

\begin{proof} Choose $o\in N$ and ${\xi}_o$ a symplectic frame of
$T_oN$ at $o$. Let $\check{\rho}$ be the element of $\sp(n,\R)$
defined by the Ricci endomorphism $\rho$ of $N$ at $o$. Let
$j:\R^{2n}\to (\R^{2(n+1)},\Omega)$ be the symplectic embedding of
$\R^{2n}$ onto the hyperplane spanned by the $2n$ last basis
vectors $e_i,1\leq i\leq 2n$. One assumes
$\Omega(e_0,e_i)=\Omega(e'_o,e_i)=0$ and $\Omega(e_0,e'_0)=1$. Define
$A\in \sp(n+1,\R)$ by:
\[
    A=\left(
        \begin{array}{ccc}
          0 & \frac{\mu}{4(n+1)^2} & 0 \\
          1 & 0 & 0 \\
          0 & 0 & -\frac{\check{\rho}}{2(n+1)} \\
        \end{array}
      \right)
\]
where $\check{\rho}=\mu \Id$. Then the manifold $\Sigma_A/\exp tA$ is
locally isomorphic to $(N,\nu)$.
\end{proof}
We end this paragraph by determining the transvection algebra of
the symmetric space $M_A=\Sigma_A/\exp tA$.

\begin{remark}
Any linear symplectic endomorphism $g\in\Sp(\R^{2(n+1)},\Omega)$ which
commutes with $A$  induces a symplectic diffeomorphism $\alpha(g)$ of
$M_A$, which is an affine map for $\nabla$ through
\[
\left(\alpha{(g)}\right)\left(\pi(x)\right):=\pi(gx), \quad x\in \Sigma_A
\]
The action of $G_1:=\{\, g\in Sp(\R^{2(n+1)},\Omega)\,\vert\, gA=Ag\,\}$
via $\alpha$ on $(M,\omega)$ is strongly Hamiltonian, and the function
$\tilde{f}_B$ on $ M_A$ corresponding to an element $B\in
\sp(\R^{2(n+1)},\Omega)$ is defined through
\[
\left(\pi^*(\tilde{f}_B)\right)(x) = 
\half \Omega(x,Bx) \quad \mbox{for } x\in \Sigma_A\subset \R^{2(n+1)}.
\]
\end{remark}

\begin{lemma}
The kernel of the homomorphism $\alpha$ from $G_1=\{g\in
Sp(n+1,\R)\suchthat gAg^{-1}=A\}$ into the group of affine symplectic
diffeomorphisms of $M_A$ is given by
\[
    \Ker\alpha=\{\exp tA\suchthat t\in \R\}.
\]
\end{lemma}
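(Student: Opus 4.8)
The plan is to prove the two inclusions separately. The inclusion $\{\exp tA\suchthat t\in\R\}\subseteq\Ker\alpha$ is immediate: $\exp tA$ commutes with $A$, hence lies in $G_1$, and since $\pi$ is by definition the projection onto the space of $\{\exp sA\}$-orbits, $\alpha(\exp tA)(\pi(x))=\pi(\exp tA\cdot x)=\pi(x)$ for every $x\in\Sigma_A$, so $\alpha(\exp tA)=\Id_{M_A}$.

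For the reverse inclusion I would take $g\in\Ker\alpha$ and first reduce to a fixed‑point situation. Since $g$ is symplectic and commutes with $A$, it preserves the function $x\mapsto\Omega(x,Ax)$, hence preserves $\Sigma_A$, and $\alpha(g)=\Id$ means precisely that $\pi\circ g=\pi$ on $\Sigma_A$, i.e. $g$ carries each $\{\exp tA\}$-orbit onto itself. Choosing a base point $x_0\in\Sigma_A$, the point $gx_0$ lies on the orbit of $x_0$, so $gx_0=\exp t_0A\cdot x_0$ for some $t_0\in\R$; replacing $g$ by $\exp(-t_0A)\,g$, which is again in the subgroup $\Ker\alpha$ by the first paragraph, we may assume $gx_0=x_0$. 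It then suffices to show $g=\Id$.

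Next I would differentiate the identity $\pi\circ g=\pi$ at $x_0$. Because $g$ is a linear automorphism of $\R^{2(n+1)}$ preserving $\Sigma_A$, the differential at $x_0$ of $g|_{\Sigma_A}$ is the restriction of $g$ to $T_{x_0}\Sigma_A=\{v\suchthat\Omega(v,Ax_0)=0\}$, so $\pi_{*x_0}(gv)=\pi_{*x_0}(v)$, i.e. $(g-\Id)v\in\Ker\pi_{*x_0}$ for all $v\in T_{x_0}\Sigma_A$. As $\pi$ is a submersion onto the $2n$-dimensional $M_A$ whose fibre through $x_0$ is the orbit $\{\exp tA\cdot x_0\}$, the one‑dimensional space $\Ker\pi_{*x_0}$ equals $\R\,Ax_0$ (note $Ax_0\neq0$ because $\Omega(x_0,Ax_0)=1$). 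Combining this with $(g-\Id)x_0=0$ and the transversal decomposition $\R^{2(n+1)}=T_{x_0}\Sigma_A\oplus\R x_0$, I get $(g-\Id)u=\phi(u)\,Ax_0$ for a linear form $\phi$ with $\phi(x_0)=0$; equivalently $gu=u+\phi(u)Ax_0$.

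Finally I would feed this form back into the hypothesis that $g$ is symplectic. Expanding $\Omega(gu,gw)=\Omega(u,w)$ and using $\Omega(Ax_0,Ax_0)=0$ gives $\phi(w)\,\Omega(u,Ax_0)=\phi(u)\,\Omega(w,Ax_0)$ for all $u,w$; taking $w=x_0$ and using $\Omega(x_0,Ax_0)=1$ together with $\phi(x_0)=0$ forces $\phi\equiv0$, hence $g=\Id$, so the original $g$ equals $\exp t_0A$. This establishes $\Ker\alpha\subseteq\{\exp tA\suchthat t\in\R\}$. I do not expect a genuine obstacle here; the two points that need a little care are the identification $\Ker\pi_{*x_0}=\R\,Ax_0$ (which uses only that $\pi$ is a submersion and that $Ax_0\neq0$) and the idea of testing the symplectic relation against the transversal vector $x_0$, for which $\Omega(x_0,Ax_0)=1$, rather than against a generic vector.
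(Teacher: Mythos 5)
Your proof is correct, but it follows a genuinely different route from the paper's. The paper argues globally: it takes a basis $u_1,\dots,u_{2(n+1)}$ of $\R^{2(n+1)}$ consisting of vectors lying in $\Sigma_A$, writes $Bu_i=e^{\tau_iA}u_i$ for an element $B$ of the kernel, and uses the pairwise invariance $\Omega(u_i,u_j)=\Omega(Bu_i,Bu_j)=\Omega(e^{(\tau_i-\tau_j)A}u_i,u_j)$ to force all the parameters to act identically, so $B=e^{\tau_1A}$. You instead normalize by multiplying with $\exp(-t_0A)$ so that a single point $x_0\in\Sigma_A$ is fixed, then work infinitesimally: differentiating $\pi\circ g=\pi$ at $x_0$ and using $\Ker\pi_{*x_0}=\R\,Ax_0$ together with the transversal splitting $\R^{2(n+1)}=T_{x_0}\Sigma_A\oplus\R x_0$ shows $g=\Id+Ax_0\otimes\phi$ with $\phi(x_0)=0$, and testing the symplectic condition against $w=x_0$ (where $\Omega(x_0,Ax_0)=1$) kills $\phi$. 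What each buys: the paper's argument avoids any differentiation but rests on the unproved (and case-dependent) claim that a basis of $\R^{2(n+1)}$ can be chosen inside $\Sigma_A$, and its step from the scalar identity $\Omega(e^{(\tau_i-\tau_j)A}u_i,u_j)=\Omega(u_i,u_j)$ to the vector identity $e^{(\tau_i-\tau_j)A}u_i=u_i$ is terse; your argument needs only that $\pi$ is a submersion with fibres the $\exp tA$-orbits and one well-chosen pairing, so it is more self-contained and uniform in the sign of $\mu$, at the cost of being a local/linear-algebra argument rather than a purely group-theoretic one.
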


\begin{proof}
Consider an element $B\in Sp(n+1,\R)$ so that $BA=AB$ and assume
that $\alpha(B)=\Id_{M_A}$.
This means that for any $x\in \Sigma_A$ there exists $t_x \in \R$
so that $Bx=\exp{t_xA}\,x$. 
There exists a basis $u_i$, $i \le 2(n+1)$, of $\R^{2(n+1)}$ such that
$u_i \in \Sigma_A$ for all $i$. If $B \in Sp(n+1,\R)$ and $BA=AB$, there
exist $\tau_i \in \R$ such that
\[
B u_i = e^{\tau_iA}u_i
\]
then $\omega(u_i,u_j) = \omega(Bu_i,Bu_j) =
\omega(e^{(\tau_i-\tau_j)A}u_i,u_j)$; that is $e^{(\tau_i-\tau_j)A}u_i =
u_i$ and hence \\$e^{\tau_iA}u_i=e^{\tau_1A}u_i\, \forall i$. Thus 
$B=e^{\tau_1A}$.
\end{proof}

The group $\alpha(G_1)$ being transitive on $M_A$ and stable by
conjugation by the symmetry at a base point $o=\pi(x_o)$ of $M_A$
contains the transvection group of $M_A$, denoted by $G(M_A)$.

If $\widetilde{\sigma}$ denotes the involutive automorphism of
$G(M_A)$, $\widetilde{\sigma}(h)=s_ohs_o$ and
$\sigma:=\widetilde{\sigma}_{*_1}$ the corresponding
involutive automorphism of $\g$ (the Lie algebra of
$G(M_A)$), one knows that $\g$ is generated by the subspace
$\p$ of $\g$:
\[
\p=\{x\in \g\,\vert\,\sigma X=-X\}, \qquad 
\g=\p\oplus [\p,\p]=:\p\oplus  \k.
\]
The group $G_1$ is stable by conjugation by $S_{x_o}$. Denote by
$ \g_1$  the Lie algebra of $G_1$,  by $\widetilde{\sigma_1}$ 
the automorphism of $G_1$
defined by 
$\widetilde{\sigma_1}g:=
S_{x_o}gS_{x_o}$ and by $\sigma_1:=\widetilde{\sigma_1}_{*_1}$.
Introduce $\p_1:=\{Y\in \g_1\,\vert\, \sigma_1Y=-Y;\}$.
Consider  $\p_1+[\p_1,\p_1]\subset \g_1$.
Remark that  $\alpha_*\p_1=\p$ and
$\alpha_{*_{|\p_1}}$ is injective, since $\Ker\alpha_*=\R A$
and $\sigma_1A=A$. Hence
\begin{lemma}
With the notations defined above, the algebra $\g$ of the transvection
group of $M_A$ is isomorphic to $\p_1+[\p_1,\p_1]$ if $A\notin [\p_1,\p_1]$
and to $\p_1\oplus [\p_1,\p_1]/\R A$ if $A\in [\p_1,\p_1]$.
\end{lemma}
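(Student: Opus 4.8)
The plan is to transport the known description of $\g$ through the homomorphism $\alpha_*$. Since $\g$ is generated as a Lie algebra by $\p$ and $\p=\alpha_*\p_1$, the subalgebra $\g$ is exactly the image under $\alpha_*$ of the Lie subalgebra $\mathfrak{s}_1\subseteq\g_1$ generated by $\p_1$. So the statement reduces to: (i) identifying $\mathfrak{s}_1$ explicitly, and (ii) computing the kernel of the restriction $\alpha_*|_{\mathfrak{s}_1}$; the first isomorphism theorem for Lie algebras then yields the result.

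For (i), recall that $\sigma_1$ is an involutive automorphism of $\g_1$ (it is conjugation by $S_{x_o}$, and one checks $S_{x_o}^2=\Id$ on $\R^{2(n+1)}$), so $\g_1=\k_1\oplus\p_1$ with $\k_1$ the $(+1)$-eigenspace. Bracket parity gives $[\p_1,\p_1]\subseteq\k_1$ and $[\k_1,\p_1]\subseteq\p_1$, hence $[\p_1,[\p_1,\p_1]]\subseteq\p_1$, while the Jacobi identity gives $[[\p_1,\p_1],[\p_1,\p_1]]\subseteq[\p_1,[\p_1,[\p_1,\p_1]]]\subseteq[\p_1,\p_1]$. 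Thus $\p_1+[\p_1,\p_1]$ is already closed under the bracket, so $\mathfrak{s}_1=\p_1+[\p_1,\p_1]$, and the sum is direct since $\p_1\cap\k_1=0$. This is the routine ``symmetric pair'' computation and presents no difficulty.

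For (ii), $\alpha_*|_{\mathfrak{s}_1}$ maps onto the subalgebra of $\alpha_*(\g_1)$ generated by $\p$, namely $\g$, and its kernel is $\mathfrak{s}_1\cap\Ker\alpha_*=\mathfrak{s}_1\cap\R A$. Because $\sigma_1A=A$ and $A\neq0$, the line $\R A$ lies in $\k_1$ and is disjoint from $\p_1$, so $\mathfrak{s}_1\cap\R A=[\p_1,\p_1]\cap\R A$; this is all of $\R A$ exactly when $A\in[\p_1,\p_1]$ and is $\{0\}$ otherwise. When $A\notin[\p_1,\p_1]$, $\alpha_*|_{\mathfrak{s}_1}$ is an isomorphism and $\g\cong\p_1+[\p_1,\p_1]$. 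When $A\in[\p_1,\p_1]$, the line $\R A=\Ker\alpha_*$ is an ideal of $\g_1$ (equivalently $[\g_1,A]\subseteq\R A$) contained in $\mathfrak{s}_1$, hence an ideal of $\mathfrak{s}_1$, and $\g\cong(\p_1\oplus[\p_1,\p_1])/\R A$.

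I do not expect a genuine obstacle: the argument rests entirely on facts already in hand, namely $\Ker\alpha_*=\R A$, $\alpha_*\p_1=\p$, $\sigma_1A=A$, and $\g=\p\oplus[\p,\p]$ with $\g$ generated by $\p$. The two points deserving a moment's care are that the dichotomy is governed by ``$A\in[\p_1,\p_1]$'' rather than ``$A\in\mathfrak{s}_1$'' — these coincide because $A$ is $\sigma_1$-even and so can enter $\mathfrak{s}_1=\p_1\oplus[\p_1,\p_1]$ only through its $\k_1$-component $[\p_1,\p_1]$ — and that the quotient in the second case is legitimate, which holds because $\R A$, being an ideal of $\g_1$, is a fortiori an ideal of the subalgebra $\p_1+[\p_1,\p_1]$.
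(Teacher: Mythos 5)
Your argument is correct and follows essentially the same route the paper intends: the paper's "proof" consists precisely of the remarks that $\alpha_*\p_1=\p$, that $\Ker\alpha_*=\R A$ with $\sigma_1A=A$ (so $\R A$ meets $\p_1\oplus[\p_1,\p_1]$ only inside $[\p_1,\p_1]$), and that $\g$ is generated by $\p$, from which the dichotomy follows exactly as you spell out via the first isomorphism theorem. You have merely filled in the routine symmetric-pair bookkeeping that the paper leaves implicit (note only that the paper later uses $\k_1$ for $[\p_1,\p_1]$ rather than for the full $(+1)$-eigenspace of $\sigma_1$, so keep the notations distinct).
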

We determine this case by case.

\bigskip

%%%%%%%%%%%%%%%%%%%%%%
If $A^2=k^2 \Id$, $k>0$, we have, as indicated above, a basis
$\{e_i;i\leq n+1,e'_i;i\leq n+1\}$ such that in this basis:
\[
    \Omega=\left(
             \begin{array}{cc}
               0 & I_{n+1} \\
               -I_{n+1} & 0 \\
             \end{array}
           \right), \qquad
    A=\left(
          \begin{array}{cc}
            kI_{n+1} & 0 \\
            0 & -kI_{n+1} \\
          \end{array}
        \right).
\]
The algebra $\g_1=\{X\in \sp(n+1,\R)\suchthat [X,A]=0\}$ is composed of
elements
\[
    X=\left(
        \begin{array}{cc}
          X_1 & 0 \\
          0 & -{}^\tau X_1 \\
        \end{array}
      \right),
    \qquad X_1\in \gl(n+1,\R)
\]
Choose as base point $x_o\in \Sigma_A$,
$x_o=-\frac{1}{\sqrt{2k}}e_1+\frac{1}{\sqrt{2k}}e'_1$. The symmetry
$S_{x_o}$ has matrix
\[
S_{x_o}=\left(
          \begin{array}{cc}
            I_{1,n} & 0 \\
            0 & I_{1,n} \\
          \end{array}
        \right)
\]
and thus
\[
    \sigma_1X=S_{x_o}XS_{x_o}=\left(
                                \begin{array}{cc}
                                  I_{1,n}X_1I_{1,n} & 0 \\
                                  0 & -{}^\tau I_{1,n}X_1I_{1,n} \\
                                \end{array}
                              \right)
\]
and $X\in \p_1$ if and only if
\begin{equation}\label{2.31}
    X_1=\left(
          \begin{array}{cc}
            0 & {}^\tau b \\
            a & 0 \\
          \end{array}
        \right)
   \ a,b\in \R^n
\end{equation}
Consider the algebra $\p_1\oplus
[\p_1,\p_1]=:\p_1\oplus \k_1$ so that 
\begin{equation}\label{2.32}
    \k_1=\left\{X=\left(
              \begin{array}{cc}
                X_1 & 0 \\
                0 & -{}^\tau X_1 \\
              \end{array}
            \right)
            \left|\,X_1=\left(
                   \begin{array}{cc}
                     {}^\tau ba'-{}^\tau b'a & 0 \\
                     0 & a\otimes {}^\tau b'-a'\otimes {}^\tau b\\
                   \end{array}
                 \right)
            a,b,a',b'\in \R^{n}\right.\right\}.
\end{equation}
The matrices $X_1$ corresponding to elements of $\k_1$ have
zero trace. From (\ref{2.31}) and (\ref{2.32}), one sees that
$\p_1 \oplus \k_1$ is isomorphic to $\sl(n+1,\R)$ and $\k_1$ to
$\gl(n,\R)$. As $A\notin \k_1$, $\p_1 \oplus \k_1=\sl(n+1,\R)$ is
the transvection algebra.

\bigskip

%%%%%%%%%%%%%%%%%%
If $A^2=-k^2 \Id$, $k>0$, we have  a basis 
$\{\widetilde{e}_j;j\leq2(n+1)\}$ such that in this basis
\[
    \Omega=\left(
             \begin{array}{cc}
               0 & I_{n+1} \\
               -I_{n+1} & 0 \\
             \end{array}
           \right),
    \qquad A=k\left(
           \begin{array}{cc}
             0 & -I_{p,q} \\
             I_{p,q} & 0 \\
           \end{array}
         \right).
\]
The algebra ${\g}_1=\{X\in \sp(n+1,\R)\suchthat [X,A]=0\}$ is composed
of elements
\[
    X=\left(
        \begin{array}{cc}
          X_1 & X_2 \\
          X_3 & -{}^\tau X_1 \\
        \end{array}
      \right)
%\]
~\mbox{ with }~
%\[
    \begin{array}{l}  {}^\tau X_2=X_2,\\ %\qquad
     {}^\tau X_3=X_3,\end{array}\qquad
       \begin{array}{l}  X_3=-I_{p,q}X_2I_{p,q},\\%\qquad
    {}^\tau X_1 I_{p,q}+I_{p,q}X_1=0\end{array}.
\]
Let us introduce on $\R^{2(n+1)}$ the complex
structure $J=\frac{1}{k}A$ and the hermitian form
\[
    h=g-i\Omega, \quad \mathrm{where}\quad   g(X,Y)=\Omega (X,JY).
\]
Identifying $\R^{2(n+1)}$ with $\C^{n+1}$ by $  z = u_1+iI_{p,q}u_2$ with
$u_1,u_2\in\R^{n+1}$ one has
\[
  h(z,z') = {}^\tau z I_{p,q}\ol{z}'
\]
and the element $X\in \g_1$ acts on $\C^{n+1}$ by
\[
    Xz=(X_1-iX_2I_{p,q})z.
\]
Thus $\g_1$ is isomorphic to $\u(p,q)$ and $Az=-ikz$. Choose as base point
$x_o=\frac{1}{\sqrt{k}}\widetilde{e_1}\in\Sigma_A$. The symmetry
$S_{x_o}$ has matrix
\[
    S_{x_o}=\left(
              \begin{array}{cc}
                I_{1,n} & 0 \\
                0 & I_{1,n} \\
              \end{array}
            \right)
\]
and thus
\[
    \sigma_1X=S_{x_o}XS_{x_o}=\left(
				\begin{array}{cc}
					I_{1,n}X_1I_{1,n} &  I_{1,n}X_2I_{1,n}\\
					I_{1,n}X_3I_{1,n} & -I_{1,n}{}^\tau X_1I_{1,n} \\
				\end{array}
			\right)
\]
and $X\in \p_1$ if and only if
\begin{eqnarray*}
  X_1 &=& \left(
            \begin{array}{cc}
              0 & -{}^\tau a I_{p-1,q} \\
              a & 0 \\
            \end{array}
          \right),
   \qquad a\in\R^n\\
  X_2 &=& \left(
            \begin{array}{cc}
              0 & {}^\tau c \\
              c & 0 \\
            \end{array}
          \right),
   \qquad c\in\R^n\\\
  X_3 &=& -I_{p,q}X_2I_{p,q}.
\end{eqnarray*}
One sees that
$\k_1=[\p_1,\p_1] $ is equal to $ \u(p-1,q) $ and that $
\p_1 \oplus \k_1 $  is equal to $\su(p,q)$. As $A\notin
\k_1$, the transvection algebra is $\mathfrak{g}=\su(p,q)$.

\bigskip

%%%%%%%%%%%%%%%

If $A^2=0$, $A\neq0$, we have a basis $\{e_i,i\leq
p;f_a,a\leq2(n+1-p);e^*_i,i\leq p\}$ of $\R^{2n+2}$ such that in this
basis
 \[
    \Omega=\left(
             \begin{array}{ccc}
               0 & 0 & -I_{q,p-q} \\
               0 & \Omega^0 & 0 \\
               I_{q,p-q} & 0 & 0 \\
             \end{array}
           \right)
    \ A=\left(
          \begin{array}{ccc}
            0 & 0 & I_p \\
            0 & 0 & 0 \\
            0 & 0 & 0 \\
          \end{array}
        \right)1\leq p\leq n+1,1\leq q\leq p
\]
where
\[
   \Omega^0_{ab}:= \Omega(f_a,f_b).
\]
The algebra ${\g}_1=\{X\in \sp(n+1,\R)\suchthat [X,A]=0\}$ is composed
of elements
\[
    X = \left(\begin{array}{ccc}X_1&X_2&X_3\\0&X_5&X_6\\
0&0&X_1\end{array}\right)
\]
where
\begin{eqnarray*}
  &&{}^\tau X_1I_{q,p-q} + I_{q,p-q} X_1= 0,
  \quad  {}^\tau X_5\Omega^0 + \Omega^0 X_5 = 0 \\
  &&{}^\tau X_2 =\Omega^0 X_6 I_{q,p-q},
  \quad  {}^\tau X_3I_{q,p-q} - I_{q,p-q}
X_3 = 0
\end{eqnarray*}
Choose as point $x_0\in\Sigma_A$, $x_0=e^*_1$. The symmetry
$S_{x_o}$ has matrix
\[
    S_{x_0}=\left(\begin{array}{ccc}
I_{1,p-1}&0&0\\
0&-I_{2(n+1-p)}&0\\
0&0&I_{1,p-1}
\end{array}\right).
\]
and thus
\[
    \sigma_1X=S_{x_o}XS_{x_o}=\left(\begin{array}{ccc}
I_{1,p-1}X_1I_{1,p-1}&-I_{1,p-1}X_2&I_{1,p-1}X_3I_{1,p-1}\\
0&X_5&-X_6I_{1,p-1}\\
0&0&I_{1,p-1}X_1I_{1,p-1}
\end{array}
\right).
\]
and hence $X\in \p_1$ if and only if
\begin{eqnarray*}
  X_1 &=& \left(
            \begin{array}{cc}
              0 & -\T p'' I_{q-1,p-q}\\
              p'' & 0 \\
            \end{array}
          \right)\qquad p'' \in\R^{p-1}
   \\
  X_6 &=& \left(
            \begin{array}{cc}
             P &0
            \end{array}
          \right),\qquad P\in\R^{2(n+1-p)}
   \\
  X_3 &=& \left(
            \begin{array}{cc}
              0 & \T p'I_{q-1,p-q} \\
              p' & 0 \\
            \end{array}
          \right),\qquad p'\in\R^{p-1}
   \\
  X_5 &=& 0.
\end{eqnarray*}
Notice that an element $X\in \p_1$  is determined by a triple of matrices
$(X_1,X_6,X_3)$, as specified above. An element of $[p_1,p_1]$ is also determined
by  matrices $(X_1,X_6,X_3)$ and 
\begin{eqnarray*}
    [(X_1,X_6,X_3),(X'_1,X'_6,X'_3)]&=&\nonumber\\
    && \mbox{}\kern-5cm \left([X_1,X'_1],X_6X_1'-X_6'X_1,[X_1,X_3']
    +[X_3,X_1']+{}^\tau(\Omega_0 X_6 I_{q,p-q})X_6'-{}^\tau(\Omega_0 X_6' 
    I_{q,p-q})X_6\right).
\end{eqnarray*}
The algebra $\p_1+[\p_1,\p_1]=\p_1+\k_1$ is thus composed of
elements
\[
    X = \left(\begin{array}{ccc}X_1&{}^\tau(\Omega_0 X_6 
     I_{q,p-q})&X_3\\0&0&X_6\\
0&0&X_1\end{array}\right)
\]
where
\[
X_1\in \so(q,p-q), \quad 
X_6 \in \gl(\R^p,\R^{2(n+1-p)}),\quad  {}^\tau X_3I_{q,p-q} - I_{q,p-q}
X_3 = 0.
\]
Observe that the subalgebra generated by the $X_6$'s and $X_3$'s is a
nilpotent ideal and that $A\in \k_1$. Recall that the transvection Lie
algebra is $\g=(\p_1+\k_1) /\R A$. Thus

\begin{lemma} \label{lem5} Let $A\in \sp(n+1,\R)$, $A\neq 0$, $A^2=0$;
assume $\Sigma_A\neq\emptyset$ and let $M_A=\Sigma_A/\exp tA$. Let $p
~(1\le p\le n+1)$ be the rank of $A$ and let $q ~(1\le q\le p)$ be the
number of positive eigenvalues in the symmetric bilinear form
$g(X,Y):=\Omega(X,AY).$ Then
\begin{itemize}
  \item [i)] if $p=1$ (and hence also $q=1$), $M_A$ has two connected
  components diffeomorphic to $\R^{2n}$ and the transvection algebra
  is the $2n$-dimensional abelian algebra;
  \item [ii)] if $p=2$, the transvection algebra is solvable and
  admits a codimension $1$ nilpotent ideal;
  \item [iii)] if $p>2$, the transvection algebra has a Levi factor
  isomorphic to $\so(q,p-q)$ and a nilpotent radical.
\end{itemize}
\end{lemma}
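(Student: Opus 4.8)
The plan is to read off the structure of the transvection algebra $\g=(\p_1+\k_1)/\R A$ directly from the explicit presentation of $\p_1+\k_1$, of its bracket, and of the position of $A$ obtained just above the statement, and then to separate the three regimes according to the behaviour of the ``$X_1$-block''. First I would introduce the subspace $\n\subset\p_1+\k_1$ consisting of the elements whose $X_1$-block vanishes --- equivalently, the subalgebra generated by the $X_6$'s and $X_3$'s singled out above. From the displayed bracket formula, the first component of $[(X_1,X_6,X_3),(X_1',X_6',X_3')]$ is $[X_1,X_1']$; hence $\n$ is an ideal and $X\mapsto X_1$ induces a Lie algebra isomorphism $(\p_1+\k_1)/\n\cong\so(q,p-q)$. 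Restricting the bracket to $\n$, the $X_1$- and $X_6$-components of $[(0,X_6,X_3),(0,X_6',X_3')]$ both vanish, so $[\n,\n]$ lies in the space of pure $X_3$-triples, which in turn is central in $\n$; thus $\n$ is two-step nilpotent. Since $A$ has triple $(0,0,I_p)\in\n$, the image $\ol{\n}:=\n/\R A$ is a nilpotent ideal of $\g$ sitting in a short exact sequence $0\to\ol{\n}\to\g\to\so(q,p-q)\to 0$. All three assertions will then be extracted from this sequence together with the elementary structure theory of $\so(q,p-q)$.

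For (i), $p=1$ forces $q=1$ and $\so(1,0)=0$, so $\n=\p_1+\k_1$; it is $(2n+1)$-dimensional with coordinates $X_6\in\R^{2n}$ and $X_3\in\R$, and $[\n,\n]$ lies in the one-dimensional line $\R A$, whence $\g=\n/\R A$ is abelian of dimension $2n$. The statement that $M_A$ has two components diffeomorphic to $\R^{2n}$ is already contained in Lemma~\ref{lem 2.4}. For (ii), $p=2$ makes $\so(q,p-q)$ one-dimensional, hence abelian; then $\ol{\n}$ has codimension $1$ in $\g$, and $[\g,\g]\subseteq\ol{\n}$, a nilpotent ideal, so $\g$ is solvable with exactly the codimension-one nilpotent ideal claimed. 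For (iii), $p>2$ makes $\so(q,p-q)$ semisimple; I would then check that $\mathrm{rad}(\g)=\ol{\n}$ by noting that the image of $\mathrm{rad}(\g)$ in $\g/\ol{\n}\cong\so(q,p-q)$ is a solvable ideal of a semisimple algebra, hence zero, so that $\mathrm{rad}(\g)\subseteq\ol{\n}\subseteq\mathrm{rad}(\g)$; Levi's theorem then gives $\g=\mathfrak{s}\ltimes\ol{\n}$ with $\mathfrak{s}\cong\g/\ol{\n}\cong\so(q,p-q)$ and nilpotent radical $\ol{\n}$.

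The bulk of the work --- determining $\g_1$, the involution $\sigma_1$, the spaces $\p_1$ and $\k_1$, the bracket on $[\p_1,\p_1]$, and the fact that $A\in\k_1$ --- is already carried out in the text, so what remains is essentially bookkeeping around the sequence above. The two places where I would be most careful are: first, establishing that $\n$ is genuinely two-step nilpotent and not merely solvable --- this is exactly what makes the radical in case (iii) nilpotent rather than only solvable, and it hinges on the bracket on $\n$ collapsing onto the central $X_3$-line; and second, pinning the radical of $\g$ down to $\ol{\n}$ (and not something strictly larger) in case (iii), for which the semisimplicity of the quotient $\g/\ol{\n}$ is the essential input. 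Neither point is difficult, but they are the two spots where the argument could quietly fail.
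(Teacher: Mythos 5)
Your argument is correct and is essentially the paper's own: the paper likewise observes that the subalgebra of $\p_1+\k_1$ spanned by the $X_6$- and $X_3$-parts is a nilpotent ideal containing $A$, with quotient $\so(q,p-q)$ read off from the $X_1$-block, and then deduces the three cases of the lemma from $\g=(\p_1+\k_1)/\R A$. Your write-up merely makes explicit the bookkeeping (two-step nilpotency of $\n$, identification of $\mathrm{rad}(\g)$ with $\ol{\n}$ via semisimplicity of $\so(q,p-q)$ for $p>2$) that the paper leaves to the reader.
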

We shall now investigate the existence of a simply-transitive subgroup $H$
of the transvection group $G(M_A)$ for the symmetric spaces
$M_A=\Sigma_A/\exp tA$. We separate the discussion into three cases
$A^2=k^2I$, $A^2=-k^2I$ and $A^2=0$.

%%%%%%%%%%%%%%%%%
%%%%%%%%%%%%%%%%%%%%%%
\section{Topology of Lie Groups}\label{section:3}

In this section we summarize the facts about the topology of Lie groups
which we need in the later sections. All the results cited can be found
in the survey article \cite{bib:Samelson} by Hans Samelson and its
bibliography.

\begin{theorem}\label{thm:TopLieGp}
Let $H$ be a Lie group diffeomorphic to $S^a\times \R^b$ or $TS^a\times
\R^b$. Then $a=1$ or $a=3$ and $H$ has maximal compact subgroup
isomorphic to $S^1$ or $SU(2)$.
\end{theorem}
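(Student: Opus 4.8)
The plan is to reduce everything to the classification of compact connected Lie groups with the homotopy type of a sphere. Since $TS^a$ deformation retracts onto its zero section and $\R^b$ is contractible, both $S^a\times\R^b$ and $TS^a\times\R^b$ are homotopy equivalent to $S^a$; hence $H$ is homotopy equivalent to $S^a$. For $a\ge1$ the sphere $S^a$ is connected, so $H$ is connected, and I assume this from now on (the value $a=0$ does not occur in the situations to which the theorem is applied).

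Next I would apply the Cartan--Mal'cev--Iwasawa structure theorem for connected Lie groups, as recalled in \cite{bib:Samelson}: $H$ is diffeomorphic to $K\times\R^m$ for a maximal compact subgroup $K$, which is a compact connected Lie group, unique up to conjugacy. Consequently $H$ is homotopy equivalent to $K$, so $K$ is a compact connected Lie group homotopy equivalent to $S^a$. It therefore suffices to prove that such a $K$ must be isomorphic to $S^1$ (forcing $a=1$) or to $SU(2)$ (forcing $a=3$); this yields both assertions of the theorem simultaneously.

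To see this, recall Hopf's theorem (again see \cite{bib:Samelson}): the rational cohomology ring of a compact connected Lie group $K$ is an exterior algebra $\Lambda_{\mathbb{Q}}(x_1,\dots,x_r)$ on $r$ generators of odd degrees $d_1,\dots,d_r$, where $r$ is the rank of $K$. If $K$ is homotopy equivalent to $S^a$, then comparing total dimensions gives $2^r=\dim_{\mathbb{Q}}H^*(K;\mathbb{Q})=\dim_{\mathbb{Q}}H^*(S^a;\mathbb{Q})=2$, so $r=1$; comparing the rings then forces $d_1=a$, and $d_1$ being odd makes $a$ odd. Thus $K$ is a rank-one compact connected Lie group, i.e. $K$ is one of $S^1=U(1)$, $SU(2)$, $SO(3)$. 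Since $\pi_1(K)\cong\pi_1(S^a)$ equals $\Z$ when $a=1$ and is trivial when $a\ge2$, the group $SO(3)$ (which has $\pi_1\cong\Z/2$, being diffeomorphic to $\mathbb{RP}^3$) is ruled out, leaving $K\cong S^1$ with $a=1$, or $K\cong SU(2)$ with $a=3$.

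The only substantial ingredients are the classical structure and cohomology theory of compact Lie groups---the Cartan--Mal'cev--Iwasawa theorem, Hopf's theorem on rational cohomology, and the classification of rank-one compact connected Lie groups---all contained in \cite{bib:Samelson}; the remaining steps are routine homotopy theory, so I do not anticipate a real obstacle beyond correctly invoking these facts.
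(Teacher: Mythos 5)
Your argument is correct and follows essentially the same route as the paper: the Iwasawa--Mal'cev decomposition $H\cong K\times\R^m$ reduces the problem to a compact connected group with the rational cohomology of $S^a$, and Hopf's theorem then pins down $K$. The only differences are cosmetic --- you treat $TS^a\times\R^b$ by homotopy equivalence instead of the diffeomorphism $TS^a\times\R\cong S^a\times\R^{a+1}$ used in the paper, and you identify $K$ via the rank-one classification together with $\pi_1$ rather than the paper's decomposition into simply connected simple factors.
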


\begin{proof} 
Note that $TS^a\times\R$ is diffeomorphic to $S^a\times\R^{a+1}$ so we have
to consider, $S^a\times \R^b$ and $TS^a$.

By the Iwasawa--Malcev theorem, $H$ is diffeomorphic to a product of its
maximal compact subgroup $K$ and a euclidean space. If $H$ is diffeomorphic
to $S^a\times \R^b$ and $a\ge 2$, $H$ must be simply connected and hence so
is $K$. The latter is then a product of simply connected compact Lie groups
with simple Lie algebras. By a theorem of Hopf each factor has the same
real cohomology as a bouquet of odd dimensional spheres with one copy of
$S^3$ always occurring and with as many spheres as the rank of the Lie
algebra. Since $S^a\times\R^b$ has the real cohomology of a single sphere,
there can only be one simple factor and it must have rank 1. Thus $K$ is
isomorphic to $SU(2)$ and $a=3$. If $a=1$ then $K$ is compact, connected,
1-dimensional and so $K=S^1$.

Suppose now that $H$ is diffeomorphic to $TS^a$ then $H\times \R$ is
diffeomorphic to $S^a\times\R^{a+1}$ so by the previous result $a=1$ or
$a=3$ and $H\times\R$ has maximal compact subgroup $S^1$ or $SU(2)$ and
hence so has $H$. Since $S^1$ and $S^3$ have trivial tangent bundles, we
are done.
\end{proof}

%%%%%%%%%%%%%%%%%%%%%%%
%%%%%%%%%%%%%%%%%%%%%%%%
\section{Simply-transitive subgroups of the group of trans\-vections $G(M_A)$ for
$M_A=\Sigma_A/\exp tA$, when $A^2=k^2I$} \label{section:musup0}

We have shown
\begin{itemize}
  \item [i)] that $M_A=TS^n$ (Lemma \ref{lem 2.4});
  \item [ii)] that the transvection algebra of $M_A$ is isomorphic to
  $\sl(n+1,\R)$;
  \item [iii)] that in an appropriate basis of $\R^{2(n+1)}$,
  $\{e_i^+,i\leq n+1;e_i^-,i\leq n+1\}$
  \[
    \Sigma_A=\{x=\sum_i{x^+}^ie_i^++{x^-}^ie_i^-\suchthat 
    \sum_{i=1}^{n+1}x_i^+x_i^-=-\frac{1}{2k}\};
  \]
  \item [iv)] that the projection $\pi:\Sigma_A\to
  TS^n=\Sigma_A/\exp tA$ has the form
  \[
    \pi(x_+,x_-)=\left(\frac{x_+}{<x_+,x_+>^{1/2}}=u,<x^+,x^+>^{1/2}x_-
    +\frac{1}{2k}\frac{x_+}{<x_+,x_+>^{1/2}}=w\right).
  \]
\end{itemize}
The subgroup of $Sp(n+1,\R)$, $G_1=\{C\suchthat [C,A]=0\}$ is $Gl(n+1,\R)$.
It acts on $TS^n$ by
\[
    B.(u,w)=\left(\frac{Bu}{<Bu,Bu>^{1/2}}\, ,\,<Bu,Bu>^{1/2}\T
    B^{-1}(w-\frac{1}{2k}u)+\frac{1}{2k}\frac{Bu}{<Bu,Bu>^{1/2}}\right)
\]
and the kernel of effectivity is $\{B=\lambda I,\lambda>0\}$; the
connected subgroup $GL^{+}(n+1,\R)$ modulo the kernel of effectivity
is $SL(n+1,\R)$. Hence the transvection group of $TS^n$ is
$SL(n+1,\R)$ and the stabilizer of the point $(u=e_1^+,w=0)$ is the
group $GL^{+}(n,\R)$.

\begin{theorem}\label{thm 3.1}
No symplectic symmetric space  $M_A = \Sigma_A / \exp tA$, where $A^2 = k^2
I$, admits a subgroup of its transvection group acting simply-transitively.
\end{theorem}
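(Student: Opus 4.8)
The plan is a proof by contradiction: first reduce to $n=3$ by topology, then decide which $6$‑dimensional Lie groups can sit inside $SL(4,\mathbb R)$ as a simply transitive subgroup of the transvection group.

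\emph{Reduction to $n=3$.} Suppose $H\le G(M_A)=SL(n+1,\mathbb R)$ acts simply transitively on $M_A$. Then the orbit map $h\mapsto h\cdot o$ is a diffeomorphism of $H$ onto $M_A$, which by Lemma~\ref{lem 2.4} is diffeomorphic to $TS^n$; hence $H$ is a Lie group diffeomorphic to $TS^n$. Theorem~\ref{thm:TopLieGp} forces $n\in\{1,3\}$, and since $n\ge 2$ we get $n=3$, $\dim H=6$, and the maximal compact subgroup $K$ of $H$ is isomorphic to $SU(2)$. Recall that here the isotropy group of the base point in $SL(4,\mathbb R)$ is $GL^+(3,\mathbb R)$ (dimension $9$), and that the $SL(4,\mathbb R)$‑action on $TS^3=\{(u,w):|u|=1,\ \langle u,w\rangle=0\}$ covers the action $u\mapsto Bu/|Bu|$ on the base $S^3\subset\mathbb R^4$ and is affine on each fibre.

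\emph{The action of $K$ on the base $S^3$.} Since $H$ acts simply transitively, $K$ acts freely on $TS^3$. If some $k\in K$ fixed a point $u_1$ of the base, then the closed (compact) subgroup it generates would act by affine transformations on the $3$‑dimensional fibre over $u_1$, and the barycentre of an orbit in that fibre would be a point of $TS^3$ fixed by $k$; hence $k=e$, i.e.\ $K$ acts freely on the base $S^3$. Conjugating $H$ inside $SL(4,\mathbb R)$ we may assume $K\subset SO(4)$, so $K$ acts linearly on $S^3\subset\mathbb R^4$; a free linear action of $SU(2)$ on $S^3$ must be a Hopf action, so $K$ acts simply transitively on the base $S^3$ and acts irreducibly (hence faithfully) on $\mathbb R^4$. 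Consequently $H=K\cdot H_o$, where $H_o$ is the isotropy group in $H$ of the base point, $K\cap H_o=\{e\}$, $\dim H_o=3$, and $H_o$ acts simply transitively on the fibre; in particular $H_o\cong\mathbb R^3$ as a manifold.

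\emph{Possible structures of $H$.} As $H$ is diffeomorphic to $S^3\times\mathbb R^3$ it is simply connected, so in the Levi--Mostow decomposition $H=L_{ss}\ltimes R$ the radical $R$ is simply connected, hence contractible and has no non‑trivial compact subgroup; therefore the maximal compact subgroup of $H$ is a maximal compact subgroup of a Levi factor $L_{ss}$. Using $\dim H=6$, $\dim K=3$, and the fact that $H$ has a faithful $4$‑dimensional representation (so no factor $\widetilde{SL(2,\mathbb R)}$ can occur), only two cases survive: either $R=\{e\}$ and $H\cong SL(2,\mathbb C)$, or $L_{ss}\cong SU(2)$ and $H=SU(2)\ltimes R$ with $R$ a $3$‑dimensional simply connected solvable group whose maximal compact subgroup is that $SU(2)$, which we take to be a standard Hopf $SU(2)_L\subset SO(4)$.

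\emph{Elimination of both cases.} If $H\cong SL(2,\mathbb C)$ acting through $\mathbb C^2\cong\mathbb R^4$, the isotropy group $\{B\in SL(2,\mathbb C):Be_1\in\mathbb R^+e_1\}$ of a ray is $3$‑dimensional, and a direct computation of its action on the corresponding fibre of $TS^3$ shows it preserves a foliation of the fibre by parallel $2$‑planes, hence is not transitive on the $3$‑dimensional fibre; so $SL(2,\mathbb C)$ is not even transitive on $TS^3$. If $H=SU(2)_L\ltimes R$, then $\mathfrak r$ is a $3$‑dimensional $SU(2)_L$‑submodule of $\mathfrak{sl}(4,\mathbb R)$, hence either the trivial or the adjoint module. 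If trivial, $R$ centralizes $SU(2)_L$; but $Z_{SL(4,\mathbb R)}(SU(2)_L)$ is (conjugate to) the other Hopf copy $SU(2)_R$, a compact group with no $3$‑dimensional solvable subgroup — contradiction. If $\mathfrak r$ is the adjoint module, Schur's lemma (the bracket $\Lambda^2\mathfrak r\to\mathfrak r$ is $SU(2)_L$‑equivariant and $\Lambda^2\mathfrak r\cong\mathfrak r$) forces $\mathfrak r$ abelian; since $SU(2)_L$ normalizes the semisimple and unipotent parts of $\mathfrak r$ and acts irreducibly on $\mathfrak r$, the subalgebra $\mathfrak r$ consists entirely of semisimple, or entirely of nilpotent, elements of $\mathfrak{sl}(\mathbb R^4)$ — in the first case the weight spaces of $\mathfrak r$ on $\mathbb R^4$ would be proper $SU(2)_L$‑invariant subspaces, impossible as $SU(2)_L$ is irreducible and $\mathfrak r$ faithful; in the second case $\sum_{\xi\in\mathfrak r}\operatorname{Im}\xi$ is a non‑zero $SU(2)_L$‑invariant subspace, hence all of $\mathbb R^4$, which contradicts nilpotence of the associative algebra generated by $\mathfrak r$. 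Thus no such $H$ exists. I expect this fourth step — in particular the elimination of $SU(2)\ltimes R$ and the identification of $Z_{SL(4,\mathbb R)}(SU(2)_L)$ — together with making the barycentre/Hopf argument of the second step fully rigorous for the specific affine fibre‑action, to be where the real work lies.
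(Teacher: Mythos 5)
Your argument coincides with the paper's only in the first step: the orbit map makes $H$ diffeomorphic to $TS^n$, and Theorem \ref{thm:TopLieGp} forces $n=3$ with maximal compact $K\cong SU(2)$ (the paper also treats $n=1$, which its Proposition covers; for the Theorem itself $n\ge 2$, so omitting it is harmless). From there the two proofs genuinely diverge. The paper never identifies $H$ as an abstract group: it decomposes $\sl(4,\R)=\so(4)\oplus\p$ under $\su(2)_L$, uses Schur's lemma to show $\h=\su(2)_L\oplus(\su(2)_L\otimes w)$ inside $\p\cong\su(2)_L\otimes\mathcal{W}$, and then exhibits a one-parameter subalgebra of $\h$ inside the isotropy algebra of the base point, so the $H$-orbit has dimension at most $5$. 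You instead exploit the fibration $TS^3\to S^3$ (freeness via the barycentre argument for the affine fibre action, which the paper's explicit formula for the $GL(4,\R)$-action does justify, then the Hopf classification) to make $K$ simply transitive on the base, classify $H$ through its Levi decomposition as $SL(2,\C)$ or $SU(2)_L\ltimes R$, and eliminate each case. Your route gives structural information about a hypothetical $H$ at the price of a longer case analysis; the paper's Schur argument disposes of all your cases at once without knowing what $H$ is.

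Two places need firming up. First, the $SL(2,\C)$ case rests on an asserted computation; it is in fact correct: for $B\in SL(2,\C)$ with $Be_1=ae_1$, $a>0$, the affine action on the fibre over $u_0=e_1$ preserves the component of $w$ along $iu_0$ (the linear part $a\,{}^\tau B^{-1}$ and the translation term both leave it unchanged), so fibre orbits have dimension at most $2$ and $SL(2,\C)$ is not even transitive on $TS^3$. Second, and more substantively, your claim that the adjoint-type radical $\mathfrak{r}$ consists entirely of semisimple or entirely of nilpotent elements does not follow from the reasons you give: irreducibility of $\mathfrak{r}$ only shows that the subspaces of nilpotent and of semisimple elements of $\mathfrak{r}$ (subspaces because $\mathfrak{r}$ is abelian) are each $0$ or $\mathfrak{r}$, and the mixed case in which both are $0$ is not excluded. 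This is repairable: additivity of the Jordan decomposition on commuting elements makes $X\mapsto X_s$ an $SU(2)_L$-equivariant linear map, whose image is either $0$ (then your nilpotent argument applies) or another copy of the adjoint module consisting of commuting semisimple elements; in that case the finitely many weights on $\C^4$ are permuted, hence fixed, by the connected group $SU(2)_L$, and since the coadjoint module has no nonzero invariant covectors all weights vanish, a contradiction. Note this also replaces your appeal to irreducibility of $\R^4$, which is not quite enough as stated, because the weight decomposition lives in $\C^4\cong\C^2\oplus\ol{\C^2}$, which is not $SU(2)_L$-irreducible. Your identification $Z_{SL(4,\R)}(SU(2)_L)=SU(2)_R$ is correct (the commutant of the quaternionic irreducible is $\mathbb{H}$, and determinant one forces unit quaternions). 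With these repairs your proof is complete and genuinely different from the paper's.
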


Equivalently,

\begin{proposition}
There is no Lie subgroup of $SL(n+1,\R)$ which acts simply-transitively
on $TS^n$, $n \ge 1$.
\end{proposition}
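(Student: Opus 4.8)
The plan is to establish the \emph{Proposition} --- that no Lie subgroup of $SL(n+1,\R)$ acts simply transitively on $TS^n$ --- and then deduce Theorem \ref{thm 3.1}, since by the discussion immediately above the theorem the transvection group of $M_A=TS^n$ has been identified with $SL(n+1,\R)$ acting in the stated way. The key observation is that if $H\subseteq SL(n+1,\R)$ acts simply transitively on $TS^n$, then as a manifold $H$ is diffeomorphic to $TS^n$ (the orbit map $h\mapsto h\cdot p$ is a diffeomorphism). So $H$ is a Lie group diffeomorphic to $TS^a\times\R^b$ with $a=n$ and $b=0$, and Theorem \ref{thm:TopLieGp} from Section \ref{section:3} applies: it forces $n=1$ or $n=3$, and $H$ has maximal compact subgroup isomorphic to $S^1$ (if $n=1$) or $SU(2)$ (if $n=3$). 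This immediately disposes of all $n\neq 1,3$, and reduces the problem to two explicit low-dimensional cases.

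For the two remaining cases I would argue by a dimension-plus-structure count. When $n=1$, $TS^1$ is $2$-dimensional, so $H$ is a $2$-dimensional subgroup of $SL(2,\R)$ with maximal compact subgroup $S^1$; but the only $2$-dimensional subalgebras of $\sl(2,\R)$ are (conjugates of) the Borel $\mathfrak{b}=\left\{\bigl(\begin{smallmatrix}a&b\\0&-a\end{smallmatrix}\bigr)\right\}$, which is solvable and whose associated connected subgroup is diffeomorphic to $\R^2$, hence has \emph{trivial} maximal compact subgroup --- contradiction. (Equivalently: a $2$-dimensional Lie group with a circle subgroup is not simply connected, whereas $TS^1\cong S^1\times\R$ is not simply connected either, so one must instead observe that the $S^1\subset H$ would be a compact subgroup of $SL(2,\R)$, i.e. conjugate into $SO(2)$, and then that the normalizer-type constraints in $\sl(2,\R)$ leave no room for a complementary direction inside a group diffeomorphic to $S^1\times\R$; the cleanest route is the subalgebra classification just given, noting the solvable Borel cannot contain a torus.) When $n=3$, $TS^3\cong S^3\times\R^3$ (since $S^3$ is parallelizable), so $H$ is a $6$-dimensional subgroup of $SL(4,\R)$ with maximal compact $SU(2)$ and $H\cong SU(2)\times\R^3$ as a manifold. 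Then $\mathfrak{h}$ is a $6$-dimensional Lie algebra with Levi factor $\su(2)$ and a $3$-dimensional radical $\mathfrak r$; I would examine the $\su(2)$-action on $\mathfrak r$: the only real $\su(2)$-modules of dimension $\le 3$ are the trivial $3$-dimensional one and the $3$-dimensional adjoint/irreducible one. In the trivial case $\mathfrak h\cong\su(2)\oplus\R^3$ and the corresponding group has center containing $\R^3$; one then checks such a group cannot be realized as a subgroup of $SL(4,\R)$ acting transitively on $TS^3$ --- e.g. because $SU(2)$ would act on $TS^3$ with the $\R^3$-orbits as a transverse foliation, forcing the $SU(2)$-orbits to be the $S^3$ factors, which are not preserved by the linear $SL(4,\R)$-action in the way required. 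In the adjoint case $\mathfrak h\cong\su(2)\ltimes\R^3$ integrates to (a cover of) the Euclidean group $SE(3)$ (or $SU(2)\ltimes\R^3$), whose faithful linear representations of minimal dimension exceed $4$, or more directly: any embedding of this group in $SL(4,\R)$ has an $\su(2)$ that, being compact, is conjugate into a fixed $SU(2)\subset SO(4)$, and the $\R^3$ then cannot commute with $A=kI\oplus(-kI)$ unless it is abelian in a way incompatible with the nontrivial bracket.

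The main obstacle --- and the part that needs care rather than the topological reduction, which is immediate from Theorem \ref{thm:TopLieGp} --- is ruling out the $n=3$ case cleanly, because there genuinely \emph{is} a $6$-dimensional compact-times-euclidean Lie group of the right homotopy type, so a soft argument does not suffice. I expect the efficient way to finish is \emph{not} to classify abstract $6$-dimensional Lie algebras, but to use the constraint that $H\subset G_1=GL(n+1,\R)$, i.e. that $H$ commutes with $A$ and acts on $TS^n$ through the \emph{specific} formula displayed before the theorem. In particular the $SU(2)\subset H$, being compact, lands (up to conjugacy) in a maximal compact of $GL(4,\R)$, namely $O(4)$, and its action on $S^3$ via $u\mapsto Bu/|Bu|$ is then the restriction of an orthogonal action, so it is transitive on $S^3$ only if it is (conjugate to) one of the standard $SU(2)$-actions; pinning down the centralizer of this $SU(2)$ inside $GL(4,\R)$ (which is small --- essentially scalars on each isotypic block) then shows there is no room for the complementary radical directions needed to make $H$ act transitively, giving the contradiction. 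An alternative and perhaps shorter endgame: observe that a simply transitive action gives $TS^n$ a left-invariant flat affine connection coming from the reductive structure, forcing strong restrictions (incomplete or not) that are known to be incompatible with the geometry of $TS^n$ for $n=1,3$; but I would present the direct subgroup argument as the primary line and mention this only as a remark.
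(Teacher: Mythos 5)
Your reduction to $n=1$ and $n=3$ via Theorem \ref{thm:TopLieGp} is exactly the paper's first step, and your $n=1$ argument (any $2$-dimensional subalgebra of $\sl(2,\R)$ is a Borel, whose connected subgroup is diffeomorphic to $\R^2$ and so has no circle) is a correct, if slightly different, substitute for the paper's representation-theoretic argument. The problem is the $n=3$ case, which you yourself identify as the crux but do not actually settle. Your structural claim that $\h$ must be ``$\su(2)$ Levi factor plus a $3$-dimensional radical'' is incomplete: a $6$-dimensional Lie group diffeomorphic to $S^3\times\R^3$ with maximal compact $SU(2)$ can be \emph{semisimple}, namely $SL(2,\C)$, and the realification of $SL(2,\C)$ does embed in $SL(4,\R)$. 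So this candidate cannot be excluded by topology or by abstract structure theory at all; it has to be ruled out geometrically (one checks its isotropy at a point of $TS^3$ is $1$-dimensional, so its orbits are $5$-dimensional). Moreover, the arguments you give for the two cases you do treat are not sound: the Euclidean group $SE(3)=SO(3)\ltimes\R^3$ \emph{does} have a faithful $4$-dimensional (affine) representation landing in $SL(4,\R)$, so ``minimal faithful dimension exceeds $4$'' is false and something sharper is needed (the real point is that the relevant maximal compact is $SU(2)$, not $SO(3)$, and a copy of $\su(2)_L\ltimes\R^3$ with abelian $\R^3$ in the adjoint module does not exist inside $\sl(4,\R)$); the remark that ``the $\R^3$ cannot commute with $A$'' is vacuous, since by construction every element of the relevant copy of $SL(4,\R)$ commutes with $A$; and the ``small centralizer'' idea only constrains the case where the complement of $\su(2)$ in $\h$ is a trivial module, not the irreducible case (which is where $\sl(2,\C)$ and the genuinely delicate candidates live).

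For comparison, the paper's proof avoids any classification of abstract $6$-dimensional Lie algebras. It shows the maximal compact may be taken to be $SU(2)_L\subset SO(4)$, decomposes $\sl(4,\R)=\so(4)\oplus\p$ with $\p\cong\R^3\otimes\R^3$ as an $SU(2)_L\times SU(2)_R$-module, and argues that the $\su(2)_L$-invariant complement $\h_1$ of $\su(2)_L$ in $\h$ is either a sum of trivial modules (forcing $\h=\so(4)$, compact, impossible) or, by Schur's lemma, of the form $\su(2)_L\otimes w$ inside $\p$. It then exhibits an explicit nonzero element of $\su(2)_L\otimes w$ lying in the isotropy algebra $\gl(3,\R)$ of the point $(u=e_1^+,w=0)$, so every such $H$ has an orbit of dimension at most $5$, contradicting transitivity. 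This isotropy-intersection argument is uniform in the isomorphism type of $\h$ and in particular disposes of the $\sl(2,\C)$ possibility that your outline misses; to repair your proof you would need either this argument or an equally concrete computation covering the semisimple case and a correct treatment of the semidirect-product cases.
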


\begin{proof}
If there is a Lie subgroup of $SL(n+1,\R)$ acting simply-transitively
then $TS^n$ is diffeomorphic to a Lie group and hence by Theorem
\ref{thm:TopLieGp} $n=1$ or $n=3$.
\begin{itemize}

 \item[(i)] $n=1$. $TS^1$ is diffeomorphic to $S^1\times\R$. If there
exists a subgroup $H$ of $SL(2,\R)$ acting simply-transitively on
$TS^1$, $H$ must be non-abelian since $SL(2,\R)$ has no 2 dimensional
abelian subgroups. $H$ is diffeomorphic to $TS^1$ so must have $S^1$ as
maximal compact subgroup. We consider the Lie algebra $\h$ of $H$ as a
representation of this circle subgroup. Real irreducibles of $S^1$ are
either the trivial one dimensional representation or non-trivial two
dimensional representations. Since $\h$ already contains one trivial
representation it must be the sum of two trivial representations and
hence be abelian, so we have a contradiction showing no such $H$ exists.
    
 \item[(ii)] $n=3$. 
This is the most complicated case and the rest of this section is
devoted to its examination.
\end{itemize}
 Let $H$ be a subgroup of $SL(4,\R)$ acting simply-transitively on
 $TS^3$. By Theorem \ref{thm:TopLieGp}, a maximal compact subgroup $K$
 of $H$ is isomorphic to $SU(2)$. Up to conjugation, we may assume
 $K\subset SO(4,\R)$. Let us investigate the $SU(2)$ subgroups of
 $SO(4,\R)$. For this, consider $\mathbb{H}$ the space of quaternions.
 We have a natural map
  \[
    (SU(2)\times SU(2))\times\mathbb{H}\to\mathbb{H}:((q_1,q_2),x)
    \mapsto q_1xq_2^{-1}
  \]
which is norm preserving:
\[
    (q_1xq_2^{-1})\ol{(q_1xq_2^{-1})}=x\ol{x}, \qquad
    \ol{x}=\textrm{quaternionic conjugate of $x$}
\]
and hence we have a homomorphism
\[
    SU(2)\times SU(2)\to O(4)
\]
which by connectedness takes it values in $SO(4)$. The kernel of
this homomorphism is $\mathbb{Z}_2=\{(1,1),(-1,-1)\}$. Hence
\begin{itemize}
  \item [a)] $SU(2)\times SU(2)/\mathbb{Z}_2$ is isomorphic to $SO(4)$;
  \item [b)] $SU(2)\times\{1\}$ and $\{1\}\times SU(2)$ project
  isomorphically into $SO(4)$.
\end{itemize}
We shall denote these subgroups $SU(2)_L$ and $SU(2)_R$. They are
normal subgroups of $SO(4)$, hence not conjugate. One checks that
they are conjugate in $SL(4,\R)$; indeed if $C$ denotes conjugation
in $\mathbb{H}$:
\[
    C\circ R_{q_2^{-1}}\circ C=L_{q_2}.
\]
Any other compact $3$-dimensional subgroup of $SO(4)$ must have a
non-trivial component in each of $SU(2)_L$ and $SU(2)_R$, and since
there are no $2$-dimensional subgroups of $SU(2)$ we can use the
projection into one factor to parametrise, and the other factor must
then be related by an (inner) automorphism from which we see that such a
subgroup must be conjugate to the image in $SO(4)$ of the diagonal
$SU(2) = \{(q,q)\suchthat q \in SU(2)\}$ and this projects to an $SO(3)$
subgroup.

Thus $SL(4,\R)$ has two conjugacy classes of compact $3$-dimensional
subgroups whose members are either isomorphic to $SU(2)$ (and we may
take $SU(2)_L$ as a representative) or isomorphic to $SO(3)$. Hence in
looking for subgroups $H$ diffeomorphic to $TS^3$ we may assume $H$ has
maximal compact subgroup $SU(2)_L$ without any loss of generality.

We now describe the adjoint action of $SU(2)_L$ on $\sl(4,\R)$. The Lie
algebra $\so(4)$ consists of all skew-symmetric matrices in $\sl(4,\R)$
and has an invariant $9$-dimensional complement $\p$ given by all
traceless symmetric matrices. These form an irreducible representation
of $SO(4)$ under the adjoint action (indeed if it were not one could
construct a non-trivial ideal in $\sl(4,\R)$) and hence of
$SU(2)_L\times SU(2)_R$. Since both factors act non-trivially (if it
were not, one could construct a non trivial ideal of $\sl(4,\R)$ in
$\so(4)$), $\p$ must be isomorphic to a tensor product of the
$3$-dimensional irreducible representation of each factor as $9 = 3
\times 3$ is the only non-trivial factorisation.  Thus as $\su(2)_L$
module, $\sl(4,\R)$ can be written as
\[
    \sl(4,\R)=\k\oplus
    \p=(\su(2)_L\oplus(\R\oplus\R\oplus\R))\oplus\R^3\oplus\R^3\oplus\R^3
\]
where $\R^3$ denotes the irreducible 3 dimensional $\su(2)_L$
module.

The algebra $\mathfrak{h}$ of $H$ contains $\su(2)_L$ and is stable by
the $\su(2)_L$ action. Hence
\[
    \mathfrak{h}=\su(2)_L\oplus \mathfrak{h}_1
\]
where $\mathfrak{h}_1$ is a 3 dimensional representation of
$\su(2)_L$. In view of the above, it is either the sum of three
trivial 1 dimensional representations or a 3 dimensional irreducible
representation. In the first case, $\mathfrak{h}$ would be isomorphic
to $\so(4)$ and $H$ can not be diffeomorphic to $TS^3$. Hence
$\mathfrak{h}_1$ is a 3 dimensional irreducible representation of
$\su(2)_L$. Thus
$\mathfrak{h}_1\subset\p$. Now $\p$ is isomorphic
as $\su(2)_L$ module to $\su(2)_L\otimes \mathcal{W}$, where
$\mathcal{W}$ is a trivial 3 dimensional $\su(2)_L$ module. We can
view the injection $\mathfrak{h}_1\to \p$ as an intertwining
map $\phi$ from $\su(2)_L$ into $\su(2)_L\otimes \mathcal{W}$. If
$v\in \su(2)_L$ and $e_i(i\leq3)$ is a basis of $\mathcal{W}$, we
have
\[
    \phi(v)=\sum_{i=1}^3\phi_i(v)\otimes e_i
\]
For any $v'\in \su(2)_L$:
\begin{eqnarray*}
  [v',\phi(v)] &=& \sum^3_{i=1}[v',\phi_i(v)]\otimes e_i=\phi([v',v]) \\
   &=& \sum_{i=1}^3\phi_i([v',v])\otimes e_i
\end{eqnarray*}
Hence each $\phi_i$ is an intertwining operator and by Schur's
lemma, is a multiple of the identity; hence
\[
    \phi(v)=\sum_{i=1}^3\lambda_iv\otimes
    e_i=v\otimes\sum_{i=1}^3\lambda_i e_i=:v\otimes w
\]
Thus $\mathfrak{h}_1=\su(2)_L\otimes w$.

Now the left action of $SU(2)_L$ on $\R^4(\sim\mathbb{H})$ has the
form
\begin{eqnarray*}
  qxq^{-1} &=&(q_0,q)(x_0,x)(q_0,-q)  \\
   &=& (x_0,(q_0^2-q^2)x+2q_0q\wedge
  x+2q_0x_0q)=:(x_0,R(q)x)
\end{eqnarray*}
where $q_0^2+|q|^2=1$. Similarly the map $x\mapsto qx$ has matrix
\[
q_L=\left(
      \begin{array}{cc}
        q_0 & -\T q \\
        q & q_0I+q_\wedge \\
      \end{array}
    \right)
\]
and the map $x\mapsto xq^{-1}$ has matrix
\[
    q_R=\left(
          \begin{array}{cc}
            q_0 & \T q \\
            -q & q_0I+q_\wedge \\
          \end{array}
        \right)
\]
Define $\eta:\R^3\otimes\R^3\to P$
\[
    \eta(x,y)=\left(
                \begin{array}{cc}
                  x.y & \T(y\wedge x) \\
                  y\wedge x & x\otimes\T y+y\otimes \T x-xy1 \\
                \end{array}
              \right)
\]
This is indeed in $\p$ as $\T \eta(x,y)=\eta(x,y)$ and
$tr\eta(x,y)=0$. Furthermore the map is clearly surjective. The
following relations show that the map $\eta$ exhibits the $SU(2)_L$
and $SU(2)_R$ action on $\p$
\begin{eqnarray*}
  q_L\eta(x,y)q_L^{-1} &=& \eta(R(q)x,y) \\
  q_R\eta(x,y)q_R^{-1} &=& \eta(x,R(q)y)
\end{eqnarray*}
To conclude, we observe that the element $t\T w\otimes w$ of
$\mathfrak{h}_1(t\in\R)$ corresponds to the element of $\p$
\[
    \eta(tw,w)=\left(
                 \begin{array}{cc}
                   t|w|^2 & 0 \\
                   0 & 2tw\otimes\T w-t|w|^21 \\
                 \end{array}
               \right).
\]
This belongs to the Lie algebra of the subgroup $GL^+(3,\R)$ which
is the stabilizer of the point $(u=e_1^+,w=0)$. Hence the orbit of
$H$ on $TS^3$ is at most of dimension 5; a contradiction.
\end{proof}

%%%%%%%%%%%%%%%%%%%%
%%%%%%%%%%%%%%%%%%%%%%
\section{Simply-transitive subgroups of the group of trans\-vections
$G(M_A)$ for $M_A=\Sigma_A/\exp tA$, when $A^2=-k^2I$}\label{section:muinf0}
 
We have shown that such spaces of dimension $2n$ are characterized by an
integer $p$, (the symmetric bilinear form $g(X,Y):=\Omega(X,AY)$ has $2p$
positive eigenvalues) $1\leq p\leq n+1$. If  $p=1$ $M_A=\C^n$ 
; $M_A$ is a complex vector bundle of rank $q=n+1-p$  over
$\mathbb{P}^{p-1}(\C)$ if $1<p<n+1$; and $M_A=\mathbb{P}^n(\C)$ if $p=n+1$.
All these spaces are simply-connected. We have also shown that the
transvection algebra is $\su(p,q)$ and that the isotropy algebra is
$u(p-1,q)$.

The subgroup of $\Symp{n+1}$ which commutes with $A$ acts on $z\in \C^{n+1} $ as
$U(p,q)$ and $\exp tA$ acts as multiplication by $e^{-ikt}$. Hence on
the quotient we obtain the projective pseudo-unitary group
$PU(p,q)=U(p,q)/e^{it}I_{n+1} = SU(p,q)/\Z_{n+1}$ as a symmetry group.
However $PU(p,q)$ is simple and acts faithfuly so it must coincide with the transvection
group of $M_A$. The stabiliser of $z=e_1$ is isomorphic to $U(p-1,q)$.

\begin{theorem} The symplectic symmetric space $M_A=\Sigma_A/\exp
tA$, $A^2=-k^2I$, admits a simply-transitive subgroup of its
transvection group if and only if $p=1$.
\end{theorem}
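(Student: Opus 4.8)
The plan is to treat the two directions separately. For the ``if'' direction ($p=1$), the space $M_A$ is diffeomorphic to $\C^n$, so no topological obstruction arises, and one should exhibit an explicit simply-transitive subgroup of the transvection group $PU(1,n)$. The natural candidate is an Iwasawa-type $AN$ subgroup: writing $PU(1,n)$ in the Iwasawa decomposition $KAN$ with $K=P(U(1)\times U(n))$ the stabiliser of the base point $z=e_1$, the subgroup $AN$ has dimension $\dim PU(1,n)-\dim K = (n^2+2n) - n^2 = 2n = \dim M_A$, acts freely (since $AN\cap K=\{1\}$), and transitively (by the Iwasawa decomposition $PU(1,n)=K\cdot AN$ applied to the orbit map). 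Moreover $N$ is the Heisenberg group of dimension $2n-1$ and $A\cong\R$, so $AN$ is precisely a one-dimensional extension of the Heisenberg group, matching the claim made in the introduction. I would also remark that varying the choice of how $A=\R$ is embedded relative to $N$ (or taking different one-dimensional solvable extensions realised inside $\su(1,n)$) yields non-isomorphic such subgroups, accounting for the ``non-isomorphic simply transitive subgroups arise'' statement in part (2) of the main theorem.

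For the ``only if'' direction I would argue by contradiction, exactly as in the $A^2=k^2I$ case: suppose $1<p$ and $H\subset PU(p,q)$ acts simply transitively on $M_A$. Then $M_A$ is diffeomorphic to a Lie group $H$. Now invoke the topology: for $1<p<n+1$, $M_A$ is a complex vector bundle of rank $q=n+1-p$ over $\P^{p-1}(\C)$, hence is homotopy equivalent to $\P^{p-1}(\C)$, which for $p\ge 2$ has nontrivial even cohomology ($H^2(\P^{p-1}(\C);\R)\ne 0$) and, for $p\ge 2$, is simply connected but not contractible. A Lie group, however, has the rational homotopy type of a product of odd-dimensional spheres (its maximal compact $K$ does, by Hopf's theorem, and $H\simeq K$ by Iwasawa--Malcev), so $H^{even}(H;\R)$ is generated in degree $0$ only — in particular $H^2(H;\R)=0$. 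This contradicts $H^2(M_A;\R)=H^2(\P^{p-1}(\C);\R)\ne 0$. The remaining subcase $p=n+1$, where $M_A=\P^n(\C)$ is compact, is even easier: a compact Lie group diffeomorphic to $\P^n(\C)$ would force $\P^n(\C)$ to have a parallelisable-friendly cohomology ring, but again $H^*(\P^n(\C);\R)$ is not an exterior algebra on odd generators for $n\ge 1$, a contradiction (alternatively, a compact Lie group has Euler characteristic $0$ unless it is a point, while $\chi(\P^n(\C))=n+1>1$).

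The main obstacle, and the step requiring the most care, is making the cohomological comparison airtight across all the cases $1<p\le n+1$ simultaneously — in particular confirming that the vector-bundle description really does give a deformation retraction onto $\P^{p-1}(\C)$ so that the cohomology computation applies, and handling the borderline small cases (e.g. checking there is no sporadic low-dimensional coincidence, as genuinely happened with $TS^1$ and $TS^3$ in Theorem \ref{thm 3.1}). I expect no such coincidence here precisely because the obstruction is the presence of degree-$2$ cohomology rather than a dimension count, and degree-$2$ classes cannot be killed by passing to a Lie group regardless of dimension; but this should be stated explicitly. The ``if'' direction, by contrast, is essentially a packaging of the standard Iwasawa decomposition and should go through routinely once the dimension bookkeeping is checked.
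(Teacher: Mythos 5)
Your proof is correct and takes essentially the same route as the paper: necessity of $p=1$ via the homotopy equivalence of $M_A$ with $\P^{p-1}(\C)$ (nonzero cohomology in degree $2$) against the fact that a simply connected, non-contractible Lie group has the real cohomology of a product of odd-dimensional spheres with first nonzero group in degree $3$, and sufficiency via the Iwasawa decomposition $SU(1,n)=U(n)AN$ with $AN$ acting simply transitively. Only be careful that your claim $H^2(H;\R)=0$ requires $H$ to be simply connected (so that no circle factors occur in its maximal compact subgroup); this holds here because $H$ would be diffeomorphic to the simply connected $M_A$, which is precisely the hypothesis the paper invokes, and it also makes your separate treatment of the compact case $p=n+1$ unnecessary.
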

\begin{proof} 
To determine whether there is a simply-transitive subgroup of the
transvection group, we note that all the above manifolds are
simply-connected, and when $p>1$ they have the homotopy type of
$\P_{p-1}(\C)$ which has first non-vanishing cohomology group in degree 2.
However a simply-connected Lie group which is not contractible will have
the cohomology of a product of odd dimensional spheres, with at least one
copy of $S^3$ occurring. Thus the first non-zero cohomology group will be
in degree 3. Hence for $p>1$ there can be no simply-transitive subgroup of
the transvection group. Thus we have shown $p=1$ is necessary.

\bigskip

If $p=1$, $M_A=SU(1,n)/U(n)$ (since the kernel of effectivity is contained in
the $U(n)$ subgroup). Now $U(n)$ is the maximal compact subgroup of
$SU(1,n)$ and  we have an Iwasawa decomposition
\[
    SU(1,n)=U(n)AN
\]
where $A$ is a  real closed 1-parameter subgroup isomorphic to $\R$. The
group $AN$ acts simply-transitively on $M_A$.
\end{proof}

\begin{remark}
The subgroup $H=AN$ above is isomorphic to the extension
$K_{2n}$ of the $2n-1$-dimensional Heisenberg group $H_{2n-1}$ by
dilations; the Heisenberg Lie algebra $\h_{2n-1}$ is defined by
\[
\h_{2n-1}=\left\{ (X,a)\,\vert\, X\in \R^{2(n-1)},a\in\R\right\} 
\ \mathrm{with} \ \,[(X,a),(Y,b)]=\left(0,\Omega^0(X,Y)\right)
\]
for a non degenerate skewsymmetric $2$-form $\Omega^0$ on $\R^{2n-2}$;
and the Lie algebra of its extension by dilations is
\[
\mathcal{K}_{2n}=\h_{2n-1}\oplus \R D \ \mathrm{with} \  
D \left( (X,a)\right):=(X,2a).
\]
\end{remark}

The question is whether there are any subgroups other than $AN$ which
act simply-transi\-tively on $G/K$ in this case. That there are follows
from the following Proposition:

\begin{proposition} 
Let $G$ be a connected semisimple Lie group; let $\mathfrak{g}=\k+\p$ be a
Cartan decomposition of its Lie algebra. Let $K$ be the analytic subgroup
with algebra $\k$; let $\a$ be the maximal abelian subalgebra of $\p$; let
$A$ be the analytic subgroup with algebra $\a$. Let $\n$ be the subalgebra
of $\mathfrak{g}$ spanned by the positive root vectors corresponding to a
choice of positive roots of the pair $(\mathfrak{g},\a)$. Let $N$ be the
analytic subgroup with algebra $\n$. Let $\m$ be the centraliser of $\a$ in
$\k$; let $\varphi:\a\to\m$ be a homomorphism; let
$\a_\varphi=\{X+\varphi(X)\suchthat X\in\a\}$. Then $a_\varphi$ is an
abelian subalgebra of $\mathfrak{g}$ of the same dimension as $\a$. Let
$A_\varphi$ be the analytic subgroup with algebra $\a_\varphi$. Then the
map $K\times A_\varphi\times N\to G:(k,a_\varphi,n)\mapsto ka_\varphi n$ is
a global decomposition of $G$ as a product of three closed subgroups.
Finally, $H_\varphi=A_\varphi N$ is a closed solvable subgroup of $G$
acting simply-transitively on $G/K$.
\end{proposition}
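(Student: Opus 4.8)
The plan is to obtain $G=KA_\varphi N$ as a $\varphi$-twisted version of the classical Iwasawa decomposition $G=KAN$. First I would integrate $\varphi$: since $\a\subseteq\p$ is abelian and $\exp\colon\a\to A$ is a diffeomorphism, $A$ is simply connected, so $\varphi\colon\a\to\m$ integrates to a unique smooth homomorphism $\wt\varphi\colon A\to K$ with $d\wt\varphi=\varphi$ and image in the analytic subgroup with algebra $\varphi(\a)\subseteq\m$. Because $\m=Z_\k(\a)$ we have $[\a,\m]=0$, so $A$ and $\wt\varphi(A)$ commute elementwise. A short bracket computation using $[\a,\a]=0$, $[\a,\m]=0$ and $[\varphi(X),\varphi(Y)]=\varphi([X,Y])=0$ shows $\a_\varphi=\{X+\varphi(X)\suchthat X\in\a\}$ is abelian, of dimension $\dim\a$ since $X\mapsto X+\varphi(X)$ is injective; and $\a_\varphi$ normalises $\n$ because $[\a,\n]\subseteq\n$ and, for $Z\in\m$, $Y\in\g_\lambda$, $H\in\a$, one gets $[H,[Z,Y]]=\lambda(H)[Z,Y]$, i.e. $[\m,\g_\lambda]\subseteq\g_\lambda$, hence $[\varphi(\a),\n]\subseteq\n$. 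Thus $\h_\varphi:=\a_\varphi+\n$ is a subalgebra with $[\h_\varphi,\h_\varphi]\subseteq\n$, so it is solvable, $A_\varphi$ normalises $N$, and $H_\varphi=A_\varphi N=NA_\varphi$ is the corresponding connected solvable subgroup.

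The crux is the decomposition itself. I would define $\jmath\colon K\times A\times N\to G$ by $\jmath(k,a,n)=k\,(a\,\wt\varphi(a))\,n$. Since $a$ commutes with $\wt\varphi(a)$, $\jmath(k,a,n)=(k\,\wt\varphi(a))\,a\,n=\iota(\Phi(k,a,n))$, where $\iota\colon K\times A\times N\to G$ is the Iwasawa diffeomorphism and $\Phi(k,a,n)=(k\,\wt\varphi(a),a,n)$ is a diffeomorphism of $K\times A\times N$ (inverse $(k,a,n)\mapsto(k\,\wt\varphi(a)^{-1},a,n)$, smooth since $\wt\varphi$ is). Hence $\jmath$ is a diffeomorphism onto $G$; putting $k=n=e$, its injectivity forces $a\mapsto a\,\wt\varphi(a)$ to be injective, so $A_\varphi=\{a\,\wt\varphi(a)\suchthat a\in A\}$ is a subgroup isomorphic to $A$ with Lie algebra $\a_\varphi$, and $\jmath$ is, up to this parametrisation, the multiplication map $K\times A_\varphi\times N\to G$, which is therefore a diffeomorphism. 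The three factors are closed in $G$: $K$ and $N$ as usual, and $A_\varphi=\jmath(\{e\}\times A\times\{e\})$, $H_\varphi=\jmath(\{e\}\times A\times N)$ are images of closed sets under the homeomorphism $\jmath$. This is the asserted global decomposition into closed subgroups.

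Finally, for the action on $G/K$: from $G=KA_\varphi N=KH_\varphi$ and the fact that $K,H_\varphi$ are subgroups, $G=G^{-1}=H_\varphi K$, so $H_\varphi$ acts transitively on $G/K$ by left translation. Since all stabilisers of a transitive action are conjugate, it suffices to check that the stabiliser $H_\varphi\cap K$ of $eK$ is trivial, and this holds because any $h\in H_\varphi\cap K$ can be written both as $\jmath(e,a,n)$ (using $h\in A_\varphi N$) and as $\jmath(h,e,e)$ (using $h\in K$), so $h=e$ by injectivity of $\jmath$. Hence $H_\varphi$ acts simply transitively. I expect no deep obstacle beyond organising this: the whole argument rests on the diffeomorphism $\jmath=\iota\circ\Phi$ together with the commutation facts $[\a,\m]=0$ and $[\m,\g_\lambda]\subseteq\g_\lambda$; the only points calling for a little care — closedness of $A_\varphi$ and $H_\varphi$, and injectivity of $a\mapsto a\,\wt\varphi(a)$ — all come for free from $\jmath$ rather than requiring a separate analysis of $A\cap\wt\varphi(A)$.
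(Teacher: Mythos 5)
Your proof is correct and follows essentially the same route as the paper: both twist the Iwasawa decomposition by $\exp\varphi(H)$, using $[\a,\m]=0$ to move the compensating factor into $K$, and deduce existence, uniqueness and closedness of $A_\varphi N$ from the Iwasawa decomposition. Your packaging via the diffeomorphism $\jmath=\iota\circ\Phi$ merely streamlines what the paper does element-by-element (with a sequence argument, using continuity of the Iwasawa projection, for closedness of $A_\varphi$), and it makes the simple transitivity, which the paper leaves implicit, explicit.
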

\begin{proof} It is clear that $\a_\varphi$ is an abelian subalgebra
of $\mathfrak{g}$ of the same dimension as $\a$ and that
$[\a_\varphi,\n]\subset\n$; hence $\h_\varphi=\a_\varphi+\n$
is a solvable algebra. Let $g\in G$ and let
\[
    g=kan
\]
be its Iwasawa decomposition. Then, if $a=\exp H$,
\[
    g=k\exp-\varphi(H)\exp(H+\varphi(H))n=:k'a'n.
\]
Hence any $g$ can be written as a product of an element of $K$, an
element of $A_\varphi$ and an element of $N$. Uniqueness of the
decomposition comes from the uniqueness of the Iwasawa decomposition
and
\begin{eqnarray*}
  k_1a_1^\varphi n_1 &=& k_1\exp(H+\varphi(H))n_1=k_1\exp\varphi(H)\exp Hn_1\\
   &=& k_1\exp\varphi(H)a_1n_1
\end{eqnarray*}
and $k_1\exp \varphi(H)\in K$.\\
Let $(a'_i)$, $i\in \N$ be a sequence of elements of $A_\varphi$
converging in $G$; then $a'_i=\exp(H_i+\varphi(H_i))$, $H_i\in\a$;
then $(H_i)$ converges to $H$ as the projection on an Iwasawa factor
is continuous. Hence
$\lim_{i\to\infty}\exp(H_i+\varphi(H_i))=\exp(H+\varphi(H))$ and
$A_\varphi$ is closed. Thus also $H_\varphi$ is a closed solvable
subgroup of $G$.
\end{proof}

\begin{remark} 
The adjoint action of $\a_\varphi$ on $\n^\C$ is semi-simple. The
eigenvalues have an imaginary part determined by $\varphi$. Thus in general
distinct $\varphi$'s will give rise to non-isomorphic subgroups
$H_\varphi$. These subgroups are all $1$-dimensional extensions of $N$
which is isomorphic to the Heisenberg group.
\end{remark}

\begin{lemma} 
Let $H$ be a linear Lie group acting simply-transitively on $\R^n$; then
$H$ is solvable.
\end{lemma}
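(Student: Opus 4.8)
The plan is to exploit the fact that a simply-transitive action of $H$ on $\R^n$ makes $H$ diffeomorphic to $\R^n$, hence contractible, which already kills any semisimple piece; one then pins down the radical. First I would invoke the Levi decomposition $\h=\s\ltimes\r$ where $\s$ is a Levi factor and $\r$ the radical. The maximal compact subgroup $K$ of the analytic subgroup with Lie algebra $\s$ is also (up to conjugacy) the maximal compact subgroup of $H$, since the radical, being a solvable connected Lie group, retracts onto a torus which — as we are about to see — must be trivial. Because $H\cong\R^n$ is contractible, Iwasawa--Malcev forces the maximal compact subgroup of $H$ to be trivial; in particular $\s$ is a semisimple Lie algebra whose associated simply-connected group has trivial maximal compact, which means $\s=0$. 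Therefore $\h=\r$ is solvable.

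The one gap in the paragraph above is the claim that the maximal compact subgroup of $H$ is trivial. This is where the hypothesis ``acting on $\R^n$'' (rather than on some other contractible space) is not even needed beyond contractibility, but one does need to be careful that the acting group $H$ itself — not just the homogeneous space — is diffeomorphic to $\R^n$. That is exactly what simple transitivity buys: the orbit map $h\mapsto h\cdot x_0$ is a diffeomorphism $H\to\R^n$ (it is a smooth bijection; simple transitivity plus the fact that the action is by a Lie group acting smoothly, with the isotropy trivial, makes it an immersion between manifolds of equal dimension, hence a local diffeomorphism, hence a diffeomorphism). So $H$ is a connected Lie group diffeomorphic to Euclidean space; by Iwasawa--Malcev (as recalled in the proof of Theorem~\ref{thm:TopLieGp}) it is diffeomorphic to $K\times\R^m$ with $K$ its maximal compact subgroup, and comparing with $\R^n$ gives $K$ a point.

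I would then finish as follows: with $K$ trivial, the Levi factor $S$ of $H$ is a connected semisimple Lie group with trivial maximal compact subgroup. But a nontrivial connected semisimple Lie group always has a nontrivial maximal compact subgroup — e.g. because its Lie algebra has a Cartan decomposition $\s=\k\oplus\p$ with $\k\neq 0$ (the compact real form direction is never zero for $\s\neq0$), and the analytic subgroup with algebra $\k$ is compact with positive dimension after passing to the adjoint group, or more simply because a semisimple Lie algebra carries a nondegenerate Killing form which is not negative definite unless... in any case $\k\neq0$. Hence $S$ is trivial and $\h$ coincides with its radical, i.e. $\h$ is solvable and therefore so is the connected group $H$ (if $H$ is not assumed connected, replace it by its identity component, which is still solvable, and note solvability of a Lie group is equivalent to solvability of its identity component when the group is linear, since $\pi_0$ is then finite — actually the statement should be read for the identity component, or one simply observes a linear group with solvable identity component and the given transitive action is automatically connected, being a diffeomorphic image of $\R^n$).

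The main obstacle, and the only real content, is the step ``$H$ diffeomorphic to $\R^n$ $\Rightarrow$ maximal compact subgroup trivial $\Rightarrow$ no semisimple part.'' Everything else is bookkeeping with the Levi decomposition. I would present the argument in the order: (1) simple transitivity $\Rightarrow$ $H\cong_{\mathrm{diff}}\R^n$; (2) Iwasawa--Malcev $\Rightarrow$ maximal compact subgroup of $H$ is trivial; (3) Levi decomposition and the fact that a nontrivial semisimple Lie group has nontrivial maximal compact $\Rightarrow$ Levi factor trivial; (4) conclude $H$ solvable.
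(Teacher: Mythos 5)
Your steps (1) and (2) coincide with the start of the paper's proof: simple transitivity makes the orbit map a diffeomorphism $H\to\R^n$, and Iwasawa--Malcev then forces the maximal compact subgroup of $H$, hence every compact subgroup, to be trivial. The genuine gap is step (3): it is \emph{not} true that a non-trivial connected semisimple Lie group must contain a non-trivial compact subgroup. The universal cover $\widetilde{SL(2,\R)}$ (equivalently $\widetilde{SU(1,1)}$ or $\widetilde{SO(2,1)}$) is diffeomorphic to $\R^3$ and has no non-trivial compact subgroup at all: the analytic subgroup corresponding to $\k=\so(2)$ in the Cartan decomposition is a line, not a circle, because of the infinite covering. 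Your hedge ``after passing to the adjoint group'' does not repair this, since the compact subgroup of the adjoint group does not lift to a compact subgroup of the Levi factor sitting inside $H$. In fact, without the linearity hypothesis the lemma itself is false: $\widetilde{SL(2,\R)}$, being diffeomorphic to $\R^3$, acts simply transitively on $\R^3$ by transport of its left translations, yet it is not solvable. The fact that your argument never uses the hypothesis that $H$ is linear is the warning sign.

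This is precisely where the paper's proof does the extra work you are missing. After the same two observations, it writes the simply connected group $H$ as a semidirect product of a simply connected solvable group by a simply connected semisimple group $L$, notes that $L$ has no non-trivial compact subgroup and is therefore contractible, and identifies the contractible simply connected semisimple groups as products of factors of type $\widetilde{SO(2,1)}$, $\widetilde{SU(1,1)}$, $\widetilde{SO(2,2)}$. Only then does linearity enter: such groups have infinite centre, whereas $L$, being a connected semisimple subgroup of the linear group $H$, must have finite centre; hence $L$ is trivial and $H$ is solvable. To repair your write-up, replace your step (3) by this centre argument (or by any equivalent statement that a \emph{linear} connected semisimple group has a genuinely compact, non-trivial maximal compact subgroup, e.g.\ the fixed-point set of a Cartan involution); the purely topological claim you relied on is false for infinite covers of Hermitian-type groups.
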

\begin{proof} $H$ is connected, simply-connected and has no non-trivial 
compact subgroup. Hence $H$ is the semi-direct product of a
simply-connected solvable group by a simply-connected semi-simple
Lie group $L$. This group $L$ is linear and has no non-trivial
compact subgroup. Hence $L$ is contractible and thus isomorphic to
$\widetilde{SO(2,1)}$, $\widetilde{SO(2,2)}$ or $\widetilde{SU(1,1)}$
($\,\,\widetilde{ }$~~denotes universal cover) or a product of these.
But such a group can not be linear as its centre is not finite. Hence
$L$ is trivial and $H$ is solvable.
\end{proof}
\begin{remark} In the situation $M=SU(1,n)/U(n)$, we have
\begin{eqnarray*}
  \su(1,n) &=& \{\left(
                  \begin{array}{cc}
                    il & \T\ol{b} \\
                    b & D \\
                  \end{array}
                \right)\suchthat l\in\R,b\in\C^n,D+\T\ol{D}=0,\,il+\Tr D=0\};
   \\
  P &=& \{\left(
            \begin{array}{cc}
              0 & \T\ol{b} \\
              b & 0 \\
            \end{array}
          \right)\suchthat b\in\C^n\};
   \\
  K &=& \{\left(
            \begin{array}{cc}
              -trD & 0 \\
              0 & D \\
            \end{array}
          \right)\suchthat D+\T\ol{D}=0\}.
\end{eqnarray*}
Up to conjugation, one can choose
\[
    \a=\R\left(
           \begin{array}{cc}
             0 & \T e_1 \\
             e_1 & 0 \\
           \end{array}
         \right)=:\R a 
\]
where $e_1$ is the first basis vector of $\C^n$. Then
\[
    \m=\left.\left\{\left(
           \begin{array}{ccc}
             -\frac{1}{2} tr E & 0 & 0 \\
             0 & -\frac{1}{2} tr E & 0 \\
             0 & 0 & E \\
           \end{array}
         \right)\right| E\in \u(n-1)\right\}.
\]
The homomorphism $\varphi$ is determined by the element $\varphi(a)$
of $\m$; up to conjugation, we may assume that $\varphi(a)$ belongs
to the standard maximal torus of $\u(n-1)$.
\end{remark}

%%%%%%%%%%%%%%%%
%%%%%%%%%%%%%%%%%%%

\section{Simply-transitive subgroups  of  the group of trans\-vections 
$G(M_A)$ for $M_A=\Sigma_A/\exp tA$, when $A^2=0$} \label{section:mueg0}

We have seen that for $A\ne 0$, $M_A=\Sigma_A/\exp tA$ is diffeomorphic to
$T(S^{q-1}\times\R^{p-q})\times\R^{2(n+1-p)}$ where $p$ and $q$ are
integers such that $1\leq p\leq n+1,1\leq q\leq p$. If $q=1$, the
manifold has two connected components, each diffeomorphic to
${\R}^{2n}$. A reasoning completely analogous to the one used in Theorem
\ref{thm 3.1} gives

\begin{proposition}
Let $M_A=\Sigma_A/\exp tA$, when $A^2=0$, be characterized
by the two integers $p,q$ where $1\leq p\leq n+1$ and $1\leq q\leq p$. If
$p=1$, $M_A$ is diffeomorphic to two copies of $\R^{2n}$; $M_A$ has the
flat symplectic connection and the translation group (which is the transvection group) acts 
simply-transitively on each component. If  $q\neq1,2,4$, $M_A$
does not admit a simply-transitive subgroup.
\end{proposition}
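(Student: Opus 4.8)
The plan is to transcribe the argument of Theorem~\ref{thm 3.1}, replacing the model manifold $TS^n$ there by the diffeomorphism type of $M_A$ furnished by Lemma~\ref{lem 2.4} and again invoking the topological dichotomy of Theorem~\ref{thm:TopLieGp}. Because the values $q=2$ and $q=4$ are excluded from the statement, we never meet the delicate situations (the analogues of $n=1,3$ in Theorem~\ref{thm 3.1}) in which $M_A$ is itself topologically a Lie group, so only the short half of that pattern is needed here.

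First I would settle the positive claim for $p=1$, where $q=1$ necessarily and Lemma~\ref{lem 2.4} already gives that $M_A$ is two copies of $\R^{2n}$; what remains is to identify the canonical connection and the transvection group. For the flatness I would show that, for every $x\in\Sigma_A$, the operator $A$ annihilates $H_x=\Span\{x,Ax\}^\bot$: from $A^2=0$ one gets $A(H_x)\subseteq H_x$ (since $\Omega(A\ol{X},x)=-\Omega(\ol{X},Ax)=0$ and $\Omega(A\ol{X},Ax)=-\Omega(\ol{X},A^2x)=0$ for $\ol{X}\in H_x$), while $A(H_x)\subseteq\Im A=\R e_1$ and $e_1\notin H_x$ because $\Omega(e_1,x)=\pm1\ne0$ for $x\in\Sigma_A$ (this is just the defining equation of $\Sigma_A$ when $p=1$); hence $\Im A\cap H_x=\{0\}$ and $A|_{H_x}=0$. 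Therefore $\widetilde{A}=0$, the Ricci endomorphism $\rho=-2(n+1)\widetilde{A}$ vanishes, and every term of the curvature formula (\ref{2.20}) is killed, so $\nabla$ is flat. By Lemma~\ref{lem5}(i) the transvection algebra is then the abelian $2n$-dimensional one, so the transvection group is the translation group of $\R^{2n}$, and its simply-transitive action on each component is immediate.

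For the negative claim I would argue by contradiction: assume $q\ge2$, $q\notin\{2,4\}$, and that a subgroup $H$ of $G(M_A)$ acts simply-transitively on $M_A$. Since $q>1$, $M_A$ is connected (Lemma~\ref{lem 2.4}), so the orbit map is a diffeomorphism and $H$ is a Lie group diffeomorphic to $M_A$; and by Lemma~\ref{lem 2.4} together with $T(X\times Y)\cong TX\times TY$ and $T\R^m\cong\R^{2m}$, this manifold is diffeomorphic to $TS^{q-1}\times\R^{2(n+1-q)}$. Applying Theorem~\ref{thm:TopLieGp} with $a=q-1$ then forces $q-1\in\{1,3\}$, contradicting $q\notin\{2,4\}$. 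This rules out a simply-transitive subgroup whenever $q\ne1,2,4$. I do not foresee a genuine obstacle; the single point that needs some care is the flatness verification in the $p=1$ case, i.e.\ checking that $A$ really does kill all of $H_x$.
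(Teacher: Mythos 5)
Your proposal is correct and takes essentially the same route as the paper, whose own ``proof'' is just the remark that the reasoning of Theorem~\ref{thm 3.1} applies: a simply-transitive subgroup would be a Lie group diffeomorphic to $TS^{q-1}\times\R^{2(n+1-q)}$, which Theorem~\ref{thm:TopLieGp} forbids unless $q-1\in\{1,3\}$, exactly as you argue, the cases $q=2,4$ being excluded from the claim. Your explicit check that $A$ annihilates $H_x$ when $p=1$ (so $\widetilde{A}=0$, $\rho=0$, and the curvature \ref{2.20} vanishes) correctly fills in a detail the paper leaves implicit via Lemma~\ref{lem5}(i) and the constructions of Section~\ref{section:two}.
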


\noindent Of the remaining cases we look at the case where $p=2$ in detail.

\begin{theorem} 
Assume $A^2=0$ and $p=2$. Let $M_{Ao}$ be a connected component of $M_A=\Sigma_A/\exp tA$.
 Then $M_{Ao}$ admits a simply-transitive subgroup if and only
if $q=1$.
\end{theorem}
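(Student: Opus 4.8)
The plan is to treat the ``if'' and the ``only if'' separately, using throughout the explicit description of the transvection algebra $\g$ of $M_A$ obtained in Section~\ref{section:two}. One may assume $H$ connected: if a subgroup of $G(M_A)$ acts simply-transitively on a component $M_{Ao}$, its identity component does too, since its orbits are open and closed in the connected set $M_{Ao}$ and the point stabiliser is already trivial. Thus the question becomes whether there is a $2n$-dimensional subalgebra $\h\subset\g$ with $\h\oplus\k=\g$ (here $\k$ is the isotropy subalgebra) integrating to a closed subgroup with trivial point stabiliser; note that simple-transitivity of $H$ forces $\h\oplus\k=\g$, which is what I shall exploit for the ``only if''. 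By Lemma~\ref{lem5}(ii), $\g=\n\rtimes\R\xi$ with $\n$ the codimension-one nilpotent ideal and $\xi$ mapping onto a generator of $\so(q,p-q)$; the matrices of Section~\ref{section:two} show $\k\subset\n$, $\dim\k=2n-1$ and $\dim\n=4n-2$. Consequently any such $\h$ meets $\n$ in a subalgebra $\l:=\h\cap\n$ with $\dim\l=2n-1$ and $\n=\l\oplus\k$.

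The key input is the spectrum of $\ad\xi$ on $\n$. A direct calculation from the bracket formulae of Section~\ref{section:two} gives that $\ad\xi|_\n$ is invertible, with eigenvalues $\pm1$ (multiplicity $2(n-1)$) and $\pm2$ when $q=1$, but eigenvalues $\pm i$ (multiplicity $2(n-1)$) and $\pm2i$ when $q=2$: a semisimple real ``dilation'' coming from $\so(1,1)$ in the first case, a ``rotation'' with purely imaginary nonzero eigenvalues coming from $\so(2)$ in the second. This dichotomy is the whole point of the theorem, and carrying the eigenvalue computation through cleanly for general $n$ is the main technical step.

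For $q=1$ one has $M_{Ao}\cong\R^{2n}$, and I build $\h$ explicitly. Write $\n=\n_{+}\oplus\n_{-}$ for the sums of the positive, resp.\ negative, eigenspaces of $\ad\xi|_\n$; then $\dim\n_{\pm}=2n-1$ and $\n_{+}$ is a subalgebra, since a bracket of eigenvectors lies in the eigenspace for the sum of the eigenvalues and among $1+1,\,1+2,\,2+2$ only $1+1=2$ is an eigenvalue. From the explicit form of $\k$ one sees that $\k$ is a graph transverse to both $\n_{+}$ and $\n_{-}$, so $\n=\n_{+}\oplus\k$; set $\h:=\n_{+}\oplus\R\xi$. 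Then $\h$ is a subalgebra, $\h\cap\k=0$, $\h\oplus\k=\g$, and $\h$ is a one-dimensional extension of the Heisenberg algebra $\h_{2n-1}$ by the dilation $D$ of the Remark, that is, $\h\cong\mathcal{K}_{2n}$. The corresponding subgroup $H$ is closed (of ``$AN$'' type) and simply-connected, and the orbit map $H\to M_{Ao}$ is a local diffeomorphism by $\h\oplus\k=\g$ together with $\Ad$-invariance of $\h$ along the orbit; a standard argument using closedness of $H$ and simple-connectedness of $\R^{2n}$ then shows it is a diffeomorphism, so $H$ acts simply-transitively. (Verifying this last topological point, and identifying $\h$ with $\mathcal{K}_{2n}$, are the routine-but-not-entirely-trivial steps here.)

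For $q=2$ one has $M_A\cong TS^1\times\R^{2(n-1)}$, and I argue by contradiction. If a connected $H$ acts simply-transitively, then $\h\oplus\k=\g$ and $\l:=\h\cap\n$ has the odd dimension $2n-1$; write $\h=\l\oplus\R w$ with $w=\xi+n_0$, $n_0\in\n$. Since $\ad\xi|_\n$ is invertible one can solve $\Ad(\exp m)\,w=\xi$ for some $m\in\n$ (the equation $n_0-(\ad\xi)m+\cdots=0$ is solved by iteration and terminates because $\n$ is nilpotent); replacing $H$ by $(\exp m)H(\exp m)^{-1}$, which is still simply-transitive and has point stabiliser inside the ideal $\n$, we may assume $\xi\in\h$. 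Then $\l=\h\cap\n$ is $\ad\xi$-invariant, being an ideal of $\h$ preserved by $\xi\in\h$. But $\ad\xi|_\n$ has no real eigenvalue, so every $\ad\xi|_\n$-invariant subspace has even dimension, since a real endomorphism of an odd-dimensional real vector space has a real eigenvalue. This contradicts $\dim\l=2n-1$, and hence no simply-transitive subgroup exists when $q=2$.
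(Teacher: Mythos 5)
Your ``only if'' half is correct in substance and genuinely different from the paper's: the paper parametrises \emph{every} subspace of $\g$ supplementary to the isotropy $\k=[\p_1,\p_1]/\R A$ as $\h_{B,\tilde a,\tilde b,\tilde c,a,c}$ and shows that closure under the bracket forces $\epsilon=-1$, i.e.\ $q=1$, whereas you use the semidirect structure $\g=\n\rtimes\R\xi$ of Lemma \ref{lem5}(ii) and the spectrum of $\ad\xi$ on $\n$. I checked that spectrum: on the $X_6$-block $\ad\xi$ acts by right multiplication by $-X_1$ (eigenvalues $\pm1$, resp.\ $\pm i$, each of multiplicity $2(n-1)$), and on the $X_3$-block its kernel is exactly $\R A$, so that on $\n=(\hbox{nilradical})/\R A$ it is invertible with the remaining eigenvalues $\pm2$ (resp.\ $\pm2i$); so your dichotomy, the conjugation of $\xi+n_0$ to $\xi$, and the odd-dimensional-invariant-subspace contradiction for $q=2$ all go through --- but you defer precisely this eigenvalue computation, which you yourself identify as the main technical step, and it must be written out.

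The genuine gap is in the ``if'' direction, at the passage from the algebraic statement to global simple transitivity. From $\h\oplus\k=\g$ and equivariance of the orbit map you do get that $H\to M_{Ao}$ is a local diffeomorphism, hence that the orbit of the base point is open; but ``closedness of $H$ together with simple-connectedness of $\R^{2n}$'' is not a standard argument that upgrades this to a diffeomorphism. An open orbit of a closed subgroup whose algebra is complementary to the isotropy algebra \emph{at one point} can be a proper subset of the homogeneous space, and a local diffeomorphism of $\R^{2n}$ into $\R^{2n}$ need be neither injective nor surjective (think of $\exp:\C\to\C$). Here the isotropy group is non-compact (its algebra lies in the nilradical), so the usual ``$HK$ closed'' argument is unavailable, and simple connectedness only kills a discrete stabiliser \emph{after} transitivity is known. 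What is really needed is transversality of $\h$ to the isotropy algebra at \emph{every} point of $M_{Ao}$, and this is exactly what the paper supplies: it introduces global Darboux coordinates $(y^0,Y,\gamma)$ on $M_{Ao}$, computes the fundamental vector fields of $\h_{B,c}$, and observes that they are everywhere linearly independent because $\ch\gamma\,\Id+\sh\gamma\,B$ is invertible whenever $B^2=\Id$; openness of all orbits plus connectedness of $M_{Ao}$ then gives transitivity, and freeness follows from the discrete stabiliser and simple connectedness as you say. Your construction $\h=\n_+\oplus\R\xi$ is fine (it is necessarily one of the paper's algebras $\h_{B,\tilde a,a,c}$, and your check that $\n=\n_+\oplus\k$ is correct), but to close the proof you must add this everywhere-transversality computation or an equivalent properness argument for the orbit map.
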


The proof of this theorem is split into two lemmas. We  first describe
all the subalgebras $\h$ of $\g=\p_1+[\p_1,\p_1]/\R A$ which are
supplementary to  $[\p_1,\p_1]/\R A$ in $\g$; the condition $q=1$ is necessary and sufficient
to have such algebras.. We then show that the connected subgroup of the
transvection group with algebra $\h$ acts simply-transitively on $M_{Ao}$.

%%%%%%%%%%%%
 We have shown in section \ref{section:two} that one can choose a basis 
 $\{e_1,e_2;f_a,a\leq2(n-1);e^*_1,e^*_2\}$ of $\R^{2n+2}$ in which
 $A=\left(
          \begin{array}{ccc}
            0 & 0 & I_2 \\
            0 & 0 & 0 \\
            0 & 0 & 0 \\
          \end{array}
        \right)$ and 
 \[
    \Omega=\left(
             \begin{array}{ccc}
               0 & 0 & \left(\begin{array}{cc}-1&0\\0&-\epsilon\end{array}\right) \\
               0 & \Omega^0 & 0 \\
            \left(\begin{array}{cc}1&0\\0&\epsilon\end{array}\right)& 0 & 0 
             \end{array}
           \right)
 \  \mathrm{with} \  \epsilon=\left\{\begin{array}{rl} 1& \mbox{ if } q=2\\
-1& \mbox{ if } q=1\end{array}\right.
\]
and a base point
$x_0=e^*_1\in\Sigma_A=\{(x,X,x_*)\,\vert\,(x_*^1)^2+\epsilon
(x_*^2)^2=1\}$ so that
\[
\p_1=\left\{
~\left(\begin{array}{ccc}
          \left( \begin{array}{cc}
              0 & -\epsilon p\\
              p & 0 \\
            \end{array}
          \right)
       &\left(\begin{array}{c}-\underline{P}\\0\end{array}\right)
      & \left( \begin{array}{cc}
              0 & \epsilon p'\\
              p' & 0 \\
            \end{array}
  \right)\\
             0 &0& \left(
            \begin{array}{cc}
             P &0
            \end{array}
          \right)
   \\
  0&0& \left( \begin{array}{cc}
              0 & -\epsilon p\\
              p & 0 \\
            \end{array}
          \right)
 \end{array}\right)
  \right\}
\]
with $p,p' \in \R,~ P\in\R^{2(n-1)}$ and $\underline{P}=\iota (P)\Omega^0$.

A subspace which is supplementary to  $[\p_1,\p_1]/\R A$ in  $\g=\p_1+[\p_1,\p_1]/\R A$ is of the form 
\[
\h_{B,\tilde{a},\tilde{b},\tilde{c},a,c}=\left\{
~K_{B,\tilde{a},\tilde{b},\tilde{c},a,c}(p,P,p')+\R A\,\vert \, 
p,p' \in \R, P \in \R^{2n-2} \right\}
\]
for a matrix $B\in \gl(\R^{2(n-1)})$, vectors
$\tilde{a},\tilde{b},\tilde{c} \in \R^{2(n-1)}$ and real numbers $a,c\in
\R$, with the matrix $K_{B,\tilde{a},\tilde{b},\tilde{c},a,c}(p,P,p')$
defined by
\[
\left(\begin{array}{ccc}
		\left( \begin{array}{cc}
				0 & -\epsilon p\\
				p & 0 \\
			\end{array}\right)
	 &\left(\begin{array}{c}-\underline{P}\\
	 -\epsilon(\underline{p\tilde{a}+BP+p'\tilde{c}})
	 \end{array}\right)
      & \left( \begin{array}{cc}
              -p'' & \epsilon p'\\
              p' & p''\\
\end{array}\right)\\
  ~&~&~\\
             0 &0& \left(
            \begin{array}{cc}
            ~&~\\
             P & ~~{p\tilde{a}+BP+p'\tilde{c}}\\
             ~&~\\
            \end{array}
          \right)
   \\
   ~&~&~\\
  0&0& \left( \begin{array}{cc}
              0 & -\epsilon p\\
              p & 0 \\
            \end{array}
          \right)
 \end{array}\right)
\]
where $p'':=ap+\Omega^0(\tilde{b},P)+cp'.$
Observe that the bracket is given by 
\[
[K_{B,\tilde{a},\tilde{b},\tilde{c},a,c}(p,P,p'),
K_{B,\tilde{a},\tilde{b},\tilde{c},a,c}(q,Q,q')]
=\left(\begin{array}{ccc} \left( \begin{array}{cc}
              0 & 0\\
              0 & 0 \\
            \end{array}
          \right)
       &\left(\begin{array}{c}-\underline{R}\\-\epsilon\underline{S}
       \end{array}\right)
      & \left( \begin{array}{cc}
             -r_1 & \epsilon r'\\
             r'& r_2\\
            \end{array}
  \right)\\
  ~&~&~\\
             0 &0& \left(
            \begin{array}{cc}
            ~&~\\
             R &  S\\
             ~&~\\
            \end{array}
          \right)
   \\
   ~&~&~\\
  0&0& \left( \begin{array}{cc}
              0 & 0\\
              0 & 0 \\
            \end{array}
          \right)
 \end{array}\right)
\]
with $\left\{ \begin{array}{l}R:=qBP-pBQ +(qp'-pq')\tilde{c},\\
S= \epsilon(-qP+pQ),\\
r'=-2p\Omega^0(\tilde{b},Q)+2q\Omega^0(\tilde{b},P) 
-2c(pq'-p'q)-\epsilon\Omega^0(p\tilde{a}+BP+p'\tilde{c},Q)\\
\qquad \qquad \qquad\mbox{}+\epsilon\Omega^0(q\tilde{a}+BQ+q'\tilde{c},P),\\
r_1= 2\epsilon(pq'-p'q)+2\Omega_0(P,Q), \\
r_2=  2\epsilon(pq'-p'q)-2\epsilon\Omega^0(p\tilde{a}
+BP+p'\tilde{c},q\tilde{a}+BQ+q'\tilde{c}).
\end{array}\right.$.\medskip

The subspace $\h_{B,\tilde{a},\tilde{b},\tilde{c},a,c}$ is a Lie
subalgebra of $\g$ if and only if $S=BR+r'\tilde{c} $ and
$\half(r_2+r_1)=\Omega^0(\tilde{b},R)+cr'$, i.e.{} if and only if for any
$p,p',q,q' \in \R$ and any $P,Q \in \R^{2n-2}$ one has
\begin{eqnarray}
\epsilon(-qP+pQ)&=&qB^2P-pB^2Q
 +(qp'-pq')B\tilde{c}+\left(-2p\Omega^0(\tilde{b},Q)
 +2q\Omega^0(\tilde{b},P)\right)\tilde{c}\label{*}\\
&&\mbox{}+\left( -2c(pq'-p'q)-\epsilon\Omega^0(p\tilde{a}
 +BP+p'\tilde{c},Q)+\epsilon\Omega^0(q\tilde{a}
 +BQ+q'\tilde{c},P)\right)\tilde{c}\nonumber\\
2\epsilon(pq'-p'q)&+&\Omega^0(P,Q)-\epsilon\Omega_0(p\tilde{a}
 +BP+p'\tilde{c},q\tilde{a}+BQ+q'\tilde{c})\nonumber\\
&=&\Omega^0(\tilde{b},qBP-pBQ +(qp'-pq')\tilde{c})
 +c\left(-2p\Omega^0(\tilde{b},Q)+2q\Omega^0(\tilde{b},P)\right)\label{**}\\
&&+c\left( -2c(pq'-p'q)-\epsilon\Omega^0(p\tilde{a}
 +BP+p'\tilde{c},Q)+\epsilon\Omega^0(q\tilde{a}+BQ+q'\tilde{c},P)\right)\nonumber. 
\end{eqnarray}
The terms in $q'P$ in equation (\ref{*}), $\epsilon
\Omega^0(q'\tilde{c},P)\tilde{c}$, imply that
\[
\tilde{c}=0
\]
and  equation (\ref{*}) is fulfilled provided we also have
\[
B^2=-\epsilon\Id.
\]
The terms in $pq-p'q'$ in equation (\ref{**}) then lead to
\begin{equation}\label{q1}
\epsilon=-1 \ (\mbox{ hence  } q=1),\ \ \mbox{ and }\  c^2=1.
\end{equation}
Terms in  $qP$ in equation (\ref{**}), $q\Omega^0(BP,\tilde{a})=\Omega^0(\tilde{b},
qBP)+c\left(2q\Omega^0(\tilde{b},P)-\Omega^0(q\tilde{a},P)\right)$ yield
\begin{equation}\label{tildeb}
\Omega^0(\tilde{b}, (B+2c)\,\cdot\,)=\Omega^0(\tilde{a},(c\Id-B)\,\cdot\,)
\end{equation}
and terms in $P,Q$ in equation (\ref{**}) yield
\begin{equation}\label{rel*}
\Omega^0((B-c\id)X,(B-c\Id)Y)=0 \qquad \forall X,Y \in \R^{2(n-1)}.
\end{equation}
Since $(B+2c\id)(B-2c\Id)=-3\Id$ and $(c\Id-B)(B-2c\Id)=3(cB-\Id)$,
$\tilde{b}$ is defined by equation (\ref{tildeb}) through
\begin{equation}\label{reltildeb}
\Omega^0(\tilde{b}, \,\cdot\,)=\Omega^0(\tilde{a},(\Id-cB)\,\cdot\,).
\end{equation}
Hence 

\begin{lemma} 
A symmetric space of dimension $2n$, $M_A=\Sigma_A/\exp tA$, $A^2=0$, with 
non-abelian solvable transvection group (i.e. p=2)  admitting a locally 
simply-transitive subgroup of the transvection group corresponds to the 
value $q=1$.
\end{lemma}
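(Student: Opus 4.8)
The plan is to turn the statement into a purely Lie-algebraic question and to settle it by a bracket computation. By Lemma~\ref{lem5}(ii) and the description of $\g$ in Section~\ref{section:two}, when $A^2=0$ and $p=2$ the transvection algebra is $\g=(\p_1+[\p_1,\p_1])/\R A$ with $A\in[\p_1,\p_1]$, and the isotropy subalgebra is $\k=[\p_1,\p_1]/\R A$. A connected subgroup of the transvection group acts locally simply-transitively on a component $M_{Ao}$ exactly when its Lie algebra $\h$ is simultaneously a Lie subalgebra of $\g$ and a vector-space complement of $\k$; in that case $\dim\h=\dim\p_1=2n$. So everything reduces to deciding for which $q$ (equivalently, for which sign $\epsilon$) such a complementary subalgebra exists.

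First I would list \emph{all} vector-space complements of $\k$. Since $\p_1$ itself is one, every complement is the graph of a linear map $\p_1\to\k$; writing that map in the block form of Section~\ref{section:two} yields precisely the six-parameter family $\h_{B,\tilde a,\tilde b,\tilde c,a,c}=\{K_{B,\tilde a,\tilde b,\tilde c,a,c}(p,P,p')+\R A\}$ recorded above. The computational heart is then to evaluate $[K(p,P,p'),K(q,Q,q')]$ in $\p_1+[\p_1,\p_1]$ and to require it to lie in $\h$ modulo $\R A$. Block by block: the top-left and bottom-right blocks force the ``$p$''-argument of the image element to vanish; the middle-right block gives $S=BR+r'\tilde c$; and, because adding a multiple of $A$ shifts the two diagonal entries of the top-right block by the same amount, that block contributes only the single averaged scalar identity $\frac12(r_1+r_2)=\Omega^0(\tilde b,R)+cr'$. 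These are the relations (\ref{*})--(\ref{**}), demanded for all $p,p',q,q'\in\R$ and all $P,Q\in\R^{2(n-1)}$.

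The rest is reading off constraints by comparing coefficients of monomials in $(p,P,p',q,Q,q')$. The coefficient of $q'P$ in (\ref{*}) is $\epsilon\,\Omega^0(q'\tilde c,P)\tilde c$, which forces $\tilde c=0$; with $\tilde c=0$, (\ref{*}) reduces to $B^2=-\epsilon\,\Id$. Substituting these into (\ref{**}) and comparing the coefficient of $pq'-p'q$ gives $2\epsilon=-2c^2$, hence $\epsilon=-1$ --- that is, $q=1$ --- and $c^2=1$ (relation (\ref{q1})); the coefficients of $qP$ and of the pure $P,Q$ terms then pin $\tilde b$ down via (\ref{reltildeb}) and impose (\ref{rel*}) on $B$. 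Conversely, for $q=1$ (so $\epsilon=-1$) these relations are solvable: taking $B=c\,\Id$ with $c=\pm1$ makes $B^2=\Id$, makes (\ref{rel*}) trivial, and forces $\tilde b=0$ through (\ref{reltildeb}), while $\tilde a\in\R^{2(n-1)}$ and $a\in\R$ remain free. Thus a complementary subalgebra exists precisely when $q=1$, which is the assertion of the Lemma (and, along the way, this classifies all candidate $\h$'s as promised before the theorem).

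The only real difficulty is the bookkeeping in the bracket formula, which has many terms; the argument turns on correctly isolating the two decisive monomials --- the $q'P$ term that kills $\tilde c$, and the $pq'-p'q$ term that rules out $\epsilon=+1$. One must also keep in mind that closure is to be imposed in the quotient $\g=(\p_1+[\p_1,\p_1])/\R A$: since $A$ meets the block decomposition only along the diagonal of the top-right $2\times2$ block, modding out by $\R A$ replaces the two a priori conditions on that block by their average, which is exactly what turns the outcome into the clean dichotomy $q=1$ versus $q=2$. Finally, the word ``locally'' in the statement matters here: that the corresponding connected subgroup actually acts simply-transitively --- and not merely with an open orbit and discrete stabiliser --- on the component $M_{Ao}$ is the content of the companion lemma and is proved separately.
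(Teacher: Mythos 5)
Your proposal is correct and follows essentially the same route as the paper: parametrize the complements of $[\p_1,\p_1]/\R A$ as the family $\h_{B,\tilde a,\tilde b,\tilde c,a,c}$, impose closure of the bracket modulo $\R A$ to obtain relations (\ref{*})--(\ref{**}), and extract $\tilde c=0$, $B^2=-\epsilon\Id$, and then $\epsilon=-1$ (i.e.\ $q=1$) with $c^2=1$ from the $q'P$ and $pq'-p'q$ coefficients, with the converse supplied by an explicit solution such as $B=c\Id$. This is exactly the computation the paper carries out immediately before stating the Lemma, including the correct deferral of the global simple transitivity to the companion lemma.
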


\begin{lemma} 
A symmetric space $M_A=\Sigma_A/\exp tA$, with $A^2=0$ and with a
non-abelian solvable transvection group and $q=1$ admits a family of
subgroups of the transvection group acting locally simply-transitively
on $M_A$. The algebra of such a subgroup is determined by an endomorphism
$B$ of $\R^{2(n-2)}$ satisfying
\begin{equation} \label{relB}
B^2=\Id \qquad \mbox{ and }\quad \Omega^0((B-c\id)X,(B-c\Id)Y)
 =0 \qquad \forall X,Y \in \R^{2(n-1)},
\end{equation} a vector $\tilde{a} \in\R^{2(n-2)}$, a real
number $a$ and a sign $c$, (i.e.~$c^2=1$);  it  is of the form
\[
\h_{B,\tilde{a},a,c}=\left\{
~K_{B,\tilde{a},a,c}(p,P,p')+\R A\,\vert \, p,p' \in \R, P \in \R^{2n-2} \right\}
\]
where 
\[
K_{B,\tilde{a},a,c}(p,P,p')=\left(\begin{array}{ccc}
          \left( \begin{array}{cc}
              0 &  p\\
              p & 0 \\
            \end{array}
          \right)
       &\left(\begin{array}{c}-\underline{P}\\(\underline{p\tilde{a}+BP})
       \end{array}\right)
      & \left( \begin{array}{cc}
              -p'' & -p'\\
              p' & p''
            \end{array}
  \right)\\
  ~&~&~\\
             0 &0& \left(
            \begin{array}{cc}
            ~&~\\
             P & ~~{p\tilde{a}+BP}\\
             ~&~\\
            \end{array}
          \right)
   \\
   ~&~&~\\
  0&0& \left( \begin{array}{cc}
              0 &  p\\
              p & 0 \\
            \end{array}
          \right)
 \end{array}\right)
\]
 with $p''=ap+\Omega^0(\tilde{a},(\Id-cB)P)+cp'$. 
\end{lemma}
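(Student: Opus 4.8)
The plan is to read off the conclusion from the bracket formula computed above together with the general structure of $M_A$ as a symmetric space. First I would collect the constraints already extracted from (\ref{*}) and (\ref{**}): $\tilde c=0$ (the $q'P$ term of (\ref{*})), $B^2=-\epsilon\Id$ (the remainder of (\ref{*})), the relation (\ref{q1}) giving $\epsilon=-1$ (hence $q=1$) and $c^2=1$, the identity (\ref{tildeb}) (the $qP$ terms of (\ref{**})) and (\ref{rel*}) (the $P,Q$ terms of (\ref{**})); using that $B+2c\Id$ is invertible and $(c\Id-B)(B-2c\Id)=3(cB-\Id)$, (\ref{tildeb}) becomes (\ref{reltildeb}), which expresses $\tilde b$ through $\tilde a$ and $B$. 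I would then check blockwise that plugging $\tilde c=0$, $\epsilon=-1$ and $\Omega^0(\tilde b,\,\cdot\,)=\Omega^0(\tilde a,(\Id-cB)\,\cdot\,)$ into $K_{B,\tilde a,\tilde b,\tilde c,a,c}(p,P,p')$ collapses it exactly to the displayed matrix $K_{B,\tilde a,a,c}(p,P,p')$, with $p''=ap+\Omega^0(\tilde a,(\Id-cB)P)+cp'$. Hence the subalgebras of $\g$ supplementary to $\k=[\p_1,\p_1]/\R A$ are precisely the $\h_{B,\tilde a,a,c}$ with $(B,\tilde a,a,c)$ subject to (\ref{relB}) and $c^2=1$.

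Next I would check that this is genuinely a family. Since $B^2=\Id$, write $\R^{2(n-1)}=V_+\oplus V_-$ for its $\pm1$-eigenspace decomposition; then $B-c\Id$ is zero on $V_c$ and invertible on $V_{-c}$, so (\ref{rel*}) says precisely that $V_{-c}$ is $\Omega^0$-isotropic. Thus admissible $B$'s are obtained by letting $V_{-c}$ run over the isotropic subspaces of $(\R^{2(n-1)},\Omega^0)$ (with $B=c\Id$ always allowed), and together with $\tilde a\in\R^{2(n-1)}$, $a\in\R$ and the sign $c$ this gives the asserted family.

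Finally, for the ``acting locally simply-transitively'' claim I would argue by homogeneity. Since $A\in[\p_1,\p_1]$ in this case, the isotropy subalgebra of $M_A$ at the base point $o$ is $\k=[\p_1,\p_1]/\R A$ and $T_oM_A\cong\g/\k$. Let $H\subset G(M_A)$ be the connected subgroup with Lie algebra $\h_{B,\tilde a,a,c}$; as $\dim\h_{B,\tilde a,a,c}=2n=\dim M_A$ and $\h_{B,\tilde a,a,c}$ is complementary to $\k$, the differential at the identity of the orbit map $\phi\colon H\to M_A$, $h\mapsto h\cdot o$, equals the composite $\h_{B,\tilde a,a,c}\hookrightarrow\g\twoheadrightarrow\g/\k\cong T_oM_A$, which is an isomorphism. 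From the equivariance $\phi\circ L_g=\tau_g\circ\phi$, where $L_g$ is left translation on $H$ and $\tau_g$ the action of $g$ on $M_A$, and the fact that $L_g$ and $\tau_g$ are diffeomorphisms, it follows that $d\phi_g$ is an isomorphism for every $g\in H$; hence $\phi$ is a local diffeomorphism, with image an open subset of the connected component $M_{Ao}$ (as $H$ is connected). This is the assertion of the lemma.

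The step I expect to need the most care is the blockwise substitution into $K_{B,\tilde a,\tilde b,\tilde c,a,c}$: one must keep the signs straight once $\epsilon=-1$ is imposed and confirm that (\ref{reltildeb}) is the correct rearrangement of (\ref{tildeb}) via $(B+2c\Id)(B-2c\Id)=-3\Id$ and $(c\Id-B)(B-2c\Id)=3(cB-\Id)$. The homogeneity argument, although it is the conceptual point of the statement, is entirely standard once the isotropy algebra has been identified with $[\p_1,\p_1]/\R A$.
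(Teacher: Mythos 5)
Your proposal is correct and follows essentially the same route as the paper: the lemma is precisely the outcome of the preceding bracket computation, with $\tilde c=0$, $\epsilon=-1$, $c^2=1$, $B^2=\Id$ and $\tilde b$ eliminated through (\ref{reltildeb}), which is exactly how the paper arrives at the stated form of $\h_{B,\tilde a,a,c}$. The only difference is that you spell out the standard argument for local simple transitivity (complementarity of $\h_{B,\tilde a,a,c}$ to the isotropy algebra $[\p_1,\p_1]/\R A$ together with equivariance of the orbit map), a step the paper leaves implicit at this point and later strengthens to global simple transitivity on $M_{Ao}$ by exhibiting the fundamental vector fields explicitly.
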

Observe that 
\[
\Ad \left( \begin{array}{ccc}\Id&0&0\\ 0&S&0\\0&0&\Id
\end{array}\right)K_{B,\tilde{a},a,c}(p,P,p')=
 K_{SBS^{-1},S\tilde{a},a,c}\left(p,SP,p'\right)
\]
for any $S\in\Sp(\R^{2(n-1)},\Omega^0)$ so that, up to isomorphism given
by conjugation in the group of affine symplectomorphisms of $M_A$, $B$ can
be defined up to conjugation in $\Sp(\R^{2(n-1)},\Omega^0)$. Similarly
\[
\Ad \left( \begin{array}{ccc}\Id
&\left(\begin{array}{c}-\underline{u}\\0\end{array}\right)&0\\
0&\Id&\left(\begin{array}{cc}~&~\\ u&0\\ ~&~\end{array}\right) \\0&0&\Id
\end{array}\right)K_{B,\tilde{a},a,c}(p,P,p')=
 K_{B,\tilde{a}+u,a-c\Omega^0(u,\tilde{a}),c}\left(p,P,p'''\right)
\]
up to the addition of a multiple of $A$, with $p'''=p'+\Omega^0(u,p\tilde{a}+BP)$, so that one can assume, up to
conjugation in the group of affine symplectomorphisms of $M_A$, that
$\tilde{a}=0$, and
\[
\Ad \left( \begin{array}{ccc}\Id&0&\left(
\begin{array}{cc} r&0\\0&-r\end{array}\right)\\ 0&\Id&0\\0&0&\Id
\end{array}\right)K_{B,0,,a,c}(p,P,p')=
 K_{B,0,,a+2rc,c}\left(p,P,p'-2rp\right)
\]
so that one can assume  $a=0$.

We now proceed to prove that the connected subgroup of the transvection
group with algebra $\h_{B,\tilde{a},a,c}$ acts simply-transitively on a
connected component $M_{Ao}$ of $M_A$.

In view of the remarks above, it is enough to consider the case
$\h_{B,c}:=\h_{B,0,0,c}$ where $\tilde{a}=0$ and $a=0$.

As above, let $\Sigma_A=\{(x^1,x^2,X^1,\ldots,X^{2(n-1)},x_*^1,x_*^2)\,
\vert \,{x_{*}^1}^2-{x_{*}^2}^2=1\, \}$ and choose the connected
component $\Sigma_A^o=\{ (x^1,x^2,X^1,\ldots,X^{2(n-1)}, x_*^{1}=\ch
\alpha,x_*^{2}=\sh\alpha)\}$. Observe that $\partial_{\alpha}=\sh
\alpha\, \partial_{x_*^1}+\ch \alpha\, \partial_{x_*^2}$. Consider
$M_{Ao}=\Sigma_A^o/\exp tA$ and the canonical projection $\pi:\Sigma_A^o\to
M_{Ao}=\Sigma_A^o/\exp tA$.  We endow $M_{Ao}$ with coordinates
$(y^0,y^1,\ldots, y^{2(n-1)},\gamma)$ defined by
 \begin{eqnarray*}
 y^0\left( \pi(x^1,x^2,X^1,\ldots,X^{2(n-1)},\ch\alpha,\sh\alpha)\right )
   &=& -x^1\sh\alpha+x^2\ch\alpha\\
 y^a\left( \pi(x^1,x^2,X^1,\ldots,X^{2(n-1)},\ch\alpha,\sh\alpha)\right )
   &=&X^a, \quad 1\le a\le 2(n-1)\\
  \gamma\left( \pi(x^1,x^2,X^1,\ldots,X^{2(n-1)},\ch\alpha,\sh\alpha)\right )
    &=& \alpha.
\end{eqnarray*}
This shows in particular that $M_{Ao}$ is diffeomorphic to $\R^{2n}$.
Furthermore,
\[
\begin{array}{rcl@{\qquad}rcl}
\pi_* \partial_{x^1} &=& -\sh\alpha\, \partial_{y^0}&
\pi_* \partial_{x^2} &=& \ch\alpha \, \partial_{y^0}\\
\pi_* \partial_{X^a} &=& \partial_{y^a}& \pi_* \partial_\alpha &=&
\partial_{ \gamma} + (x^2 \sh \alpha
- x^1 \ch\alpha)\partial_{y^0}.\\
\end{array}
\]
If a bar denotes the horizontal lift on $\Sigma_A^o$ of vectors on
$M_{Ao}$, then one has
\[
\begin{array}{rcl}
\ol{\partial_{y^0}}
&=& \sh\alpha\,\partial_{x^1} + \ch\alpha\,\partial_{x^2}\\
\ol{\partial_{y^a}} &=& \partial_{X^a} + \sum_b
\Omega^0_{ab}X^b(\ch \alpha\,\partial_{x^1}
  + \sh\alpha\,\partial_{x^2})\\
\ol{\partial_\gamma} &=& \partial_\alpha +
\left(2x^1\sh\alpha\,\ch\alpha-x^2(\ch^2\alpha
+ \sh^2\alpha)\right)\partial_{x^1}\\
&&\qquad  \mbox{}+ \left(x^1(\ch^2\alpha + \sh^2\alpha)
-2 x^2\sh\alpha\,\ch\alpha\right)\partial_{x^2}\\
\end{array}
\]
Hence the symplectic form on $M_{Ao}$ has the form
\[
\omega = dy^0\wedge d\gamma+ \frac12 \sum_{1\le a,b \le 2(n-1)} \Omega^0_{ab}\,
dy^a \wedge dy^b
\]
showing in particular that these are global Darboux coordinates.

Since the projection $\pi:\Sigma_A\to\Sigma^o_A/\exp tA=M_{Ao}$ is
equivariant with respect to the group of linear symplectic transformations
of $\R^{2(n+2)}$ commuting with $A$, the fundamental vector fields on
$M_{Ao}$ associated to the elements of $\h_{B,c}$ are the projections of
the corresponding vector fields on $\Sigma_A^o$.

The fundamental vector field on $\Sigma_A^o$ associated to 
\[
K_{B,c}(p,P,p')=\left(\begin{array}{ccc}
          \left( \begin{array}{cc}
              0 &  p\\
              p & 0 \\
            \end{array}
          \right)
       &\left(\begin{array}{c}-\underline{P}\\(\underline{BP})
       \end{array}\right)
      & \left( \begin{array}{cc}
              -cp' & -p'\\
              p' & cp'
            \end{array}
  \right)\\
  ~&~&~\\
             0 &0& \left(
            \begin{array}{cc}
            ~&~\\
             P & ~BP\\
             ~&~\\
            \end{array}
          \right)
   \\
   ~&~&~\\
  0&0& \left( \begin{array}{cc}
              0 &  p\\
              p & 0 \\
            \end{array}
          \right)
 \end{array}\right)
\]
is given by 
\begin{eqnarray*}
\left(K_{B,c}(p,P,p')\right)^{*\Sigma_A^o}_{(x,X,\alpha)}
&=&\left( -px^2+\Omega^0(P,X)+p'(c\ch \alpha+\sh\alpha)\right) \partial_{x^1}\\
&& +\left( -px^1-\Omega^0(BP,X)-p'(\ch\alpha+c\sh\alpha) \right) \partial_{x^2}\\
&& -\sum _{a=1}^{2(n-1)}(\ch\alpha\, P^a + \sh\alpha (BP)^a)\partial_{X^a}\\
&&-p\,\partial_\alpha
\end{eqnarray*}
and the fundamental vector field on $M_{Ao}$ is
$\left(K_{B,c}(p,P,p')\right)^{*M_{Ao}}_{\pi(x,X,\alpha)}=\pi_*\left(K_{B,c}
(p,P,p')\right)^{*\Sigma_A^o}_{(x,X,\alpha)}$  so that
\begin{eqnarray*}
\left(K_{B,c}(p,P,p')\right)^{*M_{Ao}}_{(y^0,Y=(y^1,\ldots,y^{2(n-1)}),\gamma)}
&=& -p\partial_{\gamma}-\sum_{a=1}^{2(n-1)} \left(\ch\gamma\, P^a+\sh\gamma\, 
(BP)^a\right)\partial_{y^a}\\
&&\kern-2cm\mbox{}-\left( \Omega^0(P,Y)\sh\gamma+\Omega^0(BP,Y)\ch\gamma+p'
(\sh\gamma +c\ch\gamma)^2\right)\partial_{y^0}.
\end{eqnarray*}

Recall that $B^2=\Id$  so that the matrix $(\ch\gamma \Id+\sh\gamma\,
B)$ is invertible for all $\gamma$. Hence the fundamental vector fields
are linearly independent at each point of $M_{Ao}$. Thus any orbit of the
connected subgroup with algebra $\h_{B,c}$ is open. The connectedness of
$M_{Ao}$ implies that the action is transitive. Hence

\begin{theorem} 
Each connected subgroup $H_{B,\tilde{a},a,c}$ of the transvection group
of $M_{Ao}$, with algebra $\h_{B,\tilde{a},a,c}$ acts globally
simply-transitively on $M_{Ao}$. In particular, the groups
$H_{B,\tilde{a},a,c}$ are symplectic groups and symmetric spaces.
\end{theorem}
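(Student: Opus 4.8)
The plan is to deduce \emph{global} simple transitivity from the transitivity already established, and then to read off the ``in particular'' clause by transport of structure. By the three $\Ad$-conjugation identities displayed above, every $\h_{B,\tilde{a},a,c}$ is conjugate, inside the Lie algebra of the group of affine symplectomorphisms of $M_{Ao}$, to an algebra of the special form $\h_{B,c}:=\h_{B,0,0,c}$; and conjugate subgroups act simply-transitively on a connected component of $M_A$ if and only if one of them does. So it suffices to treat the connected subgroup $H_{B,c}$ with algebra $\h_{B,c}$ and then transport the conclusion back.

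First I would record that $\h_{B,c}$ has dimension $2n$, the three parameters being $(p,P,p')\in\R\times\R^{2(n-1)}\times\R$, matching $\dim M_{Ao}=2n$. We have already shown, using $B^2=\Id$ (so that $\ch\gamma\,\Id+\sh\gamma\,B$ is invertible for every $\gamma$) together with the nondegeneracy of $\Omega^0$, that the fundamental vector fields $\bigl(K_{B,c}(p,P,p')\bigr)^{*M_{Ao}}$ are linearly independent at every point of $M_{Ao}$; hence every orbit is open and, $M_{Ao}$ being connected, $H_{B,c}$ acts transitively.

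To upgrade transitivity to simple transitivity I would argue as follows. Fix the base point $o=\pi(x_0)$ and let $H_o\subset H_{B,c}$ be its stabiliser. Transitivity realises $H_{B,c}\to M_{Ao}$, $h\mapsto h\cdot o$, as a locally trivial fibre bundle with fibre $H_o$ and identifies $M_{Ao}$ with $H_{B,c}/H_o$. The Lie algebra of $H_o$ is the kernel of the evaluation map $\h_{B,c}\to T_oM_{Ao}$, $X\mapsto X^{*M_{Ao}}_o$, which is injective precisely because the fundamental vector fields are pointwise independent; thus $H_o$ is discrete and $H_{B,c}\to M_{Ao}$ is a covering map. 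But $M_{Ao}$ is diffeomorphic to $\R^{2n}$ — this is exactly what the construction of the global Darboux coordinates $(y^0,\dots,y^{2(n-1)},\gamma)$ showed — hence simply connected, so the covering is trivial; as $H_{B,c}$ is connected this forces $H_o=\{e\}$. Therefore $H_{B,c}$ acts freely, i.e. globally simply-transitively, and the same then holds for every $H_{B,\tilde{a},a,c}$ by conjugation.

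Finally, the ``in particular'' clause is transport of structure along the diffeomorphism $\Phi_o\colon H_{B,\tilde{a},a,c}\to M_{Ao}$, $h\mapsto h\cdot o$, which intertwines left translation on the group with the given action on $M_{Ao}$. Since $H_{B,\tilde{a},a,c}$ is a subgroup of the transvection group, it acts by symplectomorphisms that are affine for the canonical connection $\nabla$ and that satisfy $g\circ s_x\circ g^{-1}=s_{gx}$; consequently $\Phi_o^*\omega$, $\Phi_o^*\nabla$ and the transported symmetries $h'\mapsto\Phi_o^{-1}\!\bigl(s_{\Phi_o(h)}\Phi_o(h')\bigr)$ are all invariant under left translation, so $H_{B,\tilde{a},a,c}$, identified with $M_{Ao}$, is simultaneously a symplectic Lie group and a symplectic symmetric space whose canonical connection is the transported $\nabla$. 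The only step that demands any care is the passage from transitivity to freeness, and even there the crux — that the orbit covering is trivial — rests entirely on the fact, already in hand, that $M_{Ao}\cong\R^{2n}$; the rest is the bookkeeping of the explicit $\Ad$-conjugations and a routine transport of structure.
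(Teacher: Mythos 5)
Your proposal is correct and follows essentially the same route as the paper: reduction to $\h_{B,c}=\h_{B,0,0,c}$ via the displayed $\Ad$-conjugations, pointwise linear independence of the fundamental vector fields from $B^2=\Id$ (invertibility of $\ch\gamma\,\Id+\sh\gamma\,B$), hence open orbits and, by connectedness of $M_{Ao}$, transitivity. The only difference is that you make explicit the final upgrade to global simple transitivity (discrete stabiliser plus simple connectedness of $M_{Ao}\cong\R^{2n}$ forces the stabiliser to be trivial via the covering argument) and the transport of structure giving the ``in particular'' clause, both of which the paper leaves implicit in its concluding ``Hence''.
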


To conclude this paragraph, we investigate the question of the strongly
Hamiltonian character of the action of the group $H_{B,c}$ on $M_{Ao}$.

\begin{proposition} The action of the group $H_{B,c}$ on $M_{Ao}$ is 
strongly Hamiltonian if and only if $B=c\Id$. The group
$H_{c\Id,c}$ is the $1$-dimensional extension of the Heisenberg group
of dimension $2n-1$ by the dilation automorphism.
\end{proposition}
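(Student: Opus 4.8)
The plan is to realise the action as Hamiltonian, write down an explicit momentum map, and then locate the obstruction to equivariance as a class in $H^2(\h_{B,c};\R)$. Recall from the theorem above that $H_{B,c}=H_{B,0,0,c}$ acts simply transitively --- in particular by symplectomorphisms --- on $M_{Ao}$, and that $M_{Ao}$ is diffeomorphic to $\R^{2n}$, hence simply connected. So every fundamental vector field $\bigl(K_{B,c}(p,P,p')\bigr)^{*M_{Ao}}$ is globally Hamiltonian, and, fixing a basis of $\h_{B,c}$, there is a momentum map $X\mapsto f_X$ depending linearly on $X\in\h_{B,c}$; thus the action is automatically Hamiltonian, and only the \emph{strong} part --- equivariance of the momentum map --- is at issue. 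First I would produce $f_X$ explicitly: contracting $\omega=dy^0\wedge d\gamma+\frac12\sum_{a,b}\Omega^0_{ab}\,dy^a\wedge dy^b$ with the fundamental vector field computed above and integrating (the additive constant chosen linear in $X$) yields, up to an overall sign,
\[
 f_{K_{B,c}(p,P,p')}=p\,y^0-\bigl(\ch\gamma\,\Omega^0(P,Y)+\sh\gamma\,\Omega^0(BP,Y)\bigr)-\frac{p'}{2}\bigl(\sh(2\gamma)+c\,\ch(2\gamma)\bigr),
\]
with $Y=(y^1,\dots,y^{2(n-1)})$; the identity $(\sh\gamma+c\,\ch\gamma)^2=\ch(2\gamma)+c\,\sh(2\gamma)$, which holds because $c^2=1$, is what makes the $p'$-term integrable.

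Next I would compute the constant $2$-cocycle $\beta$ on $\h_{B,c}$ measuring the failure of the linear map $X\mapsto f_X$ to be a morphism into the Poisson algebra $(C^{\infty}(M_{Ao}),\{\,,\,\})$; it is constant because its Hamiltonian vector field vanishes. Using the block form of $\omega$ for the Poisson bracket, the bracket formula for $\h_{B,c}$ displayed above, the relation $\ch(2\gamma)+c\,\sh(2\gamma)=2c\cdot\frac12\bigl(\sh(2\gamma)+c\,\ch(2\gamma)\bigr)$, together with $B^2=\Id$ and the identity (\ref{rel*}), all the $\gamma$- and $Y$-dependence cancels (it must, a priori) and one is left with
\[
 \beta\bigl(K_{B,c}(p,P,p'),K_{B,c}(q,Q,q')\bigr)=\Omega^0(P,Q)-\frac{c}{2}\bigl(\Omega^0(BP,Q)+\Omega^0(P,BQ)\bigr).
\]
By the standard obstruction for momentum maps, the action is strongly Hamiltonian if and only if $\beta$ is a Chevalley--Eilenberg coboundary, i.e.\ $\beta=\delta\lambda$ for some $\lambda\in\h_{B,c}^*$.

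Now the particular shape of $\h_{B,c}$ enters. From the bracket formula every bracket $[K_{B,c}(p,P,p'),K_{B,c}(q,Q,q')]$ has vanishing first ($p$-)slot, so $[\h_{B,c},\h_{B,c}]$ lies in the hyperplane $\{p=0\}$; and its $P$-slot is $B(qP-pQ)$, which genuinely involves $p$ and $q$. Since $\beta$ depends only on $P$ and $Q$, differentiating $\lambda([X,Y])$ in $p$ and using the invertibility of $B$ forces $\lambda$ to vanish on every $K_{B,c}(0,\,\cdot\,,\,\cdot\,)$, hence on $[\h_{B,c},\h_{B,c}]$, hence $\delta\lambda=0$. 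Therefore the action is strongly Hamiltonian exactly when $\beta\equiv 0$, i.e.\ when $\Omega^0(P,Q)=\frac c2\bigl(\Omega^0(BP,Q)+\Omega^0(P,BQ)\bigr)$ for all $P,Q$, i.e.\ when the $\Omega^0$-adjoint $B^{\dagger}$ of $B$ satisfies $B^{\dagger}=2c\,\Id-B$. Combining this with $B^2=\Id$ and with the fact --- which is exactly (\ref{rel*}) --- that the $(-c)$-eigenspace $V_-=\Ker(B+c\,\Id)$ of $B$ is $\Omega^0$-isotropic: if $0\ne u\in V_-$ then $B^{\dagger}u=2cu-Bu=3cu$, so $\Omega^0(u,Bv)=3c\,\Omega^0(u,v)$ for all $v$; taking $v$ in the $c$-eigenspace $V_+=\Ker(B-c\,\Id)$ gives $c\,\Omega^0(u,v)=3c\,\Omega^0(u,v)$, hence $\Omega^0(u,v)=0$ for all $v\in V_+$, while $\Omega^0(u,v)=0$ for all $v\in V_-$ by isotropy; since $V_+\oplus V_-=\R^{2(n-1)}$ and $\Omega^0$ is nondegenerate, $u=0$ --- a contradiction. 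Hence $V_-=0$, that is $B=c\,\Id$; and conversely $B=c\,\Id$ visibly annihilates $\beta$. The genuinely laborious step here is pushing the momentum-map and Poisson-bracket computation through to the clean constant above; once that is done the reduction to $B=c\,\Id$ is short.

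For the last assertion I would set $B=c\,\Id$ in the bracket formula. Writing $D$ for a suitable nonzero multiple of the element with $p$-slot $1$, $\xi(P)$ for the element with $P$-slot $P$ and the other slots zero, and $Z$ for the element with $p'$-slot $1$, one reads off $[D,\xi(P)]=\xi(P)$, $[D,Z]=2Z$, $[\xi(P),\xi(Q)]=2c\,\Omega^0(P,Q)\,Z$ and $[\xi(P),Z]=0$. Thus the span of the $\xi(P)$ and of $Z$ is a copy of the Heisenberg algebra $\h_{2n-1}$ with centre $\R Z$, and $D$ acts on it with eigenvalue $1$ on the $2(n-1)$ translation directions and eigenvalue $2$ on $\R Z$ --- i.e.\ as the dilation automorphism. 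Hence $\h_{c\Id,c}=\mathcal{K}_{2n}$, and since the groups are simply connected $H_{c\Id,c}$ is the one-dimensional dilation extension $K_{2n}$ of the $(2n-1)$-dimensional Heisenberg group, as claimed.
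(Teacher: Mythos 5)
Your proposal is correct, and its core is the same as the paper's: you write down the same explicit momentum map (your $-\frac{p'}{2}(\sh(2\gamma)+c\ch(2\gamma))$ term is exactly the paper's $-\frac{1}{2c}p'e^{2c\gamma}$), compute the constant obstruction cocycle, use $B^2=\Id$ together with the isotropy relation (\ref{rel*}) to force $B=c\Id$, and then read off the Heisenberg-plus-dilation structure from the bracket at $B=c\Id$. Your cocycle $\Omega^0(P,Q)-\frac{c}{2}\bigl(\Omega^0(BP,Q)+\Omega^0(P,BQ)\bigr)$ agrees, up to an overall sign and up to rewriting via (\ref{rel*}), with the paper's discrepancy $\frac12\bigl(\Omega^0(BP,BQ)-\Omega^0(P,Q)\bigr)$, so the vanishing condition is the same. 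Two points where you genuinely differ: first, you treat the obstruction properly as a class in $H^2(\h_{B,c};\R)$ and prove, by differentiating $\lambda([X,Y])$ in the $p$-variable and using invertibility of $B$, that any coboundary depending only on $(P,Q)$ must vanish; the paper simply equates ``strongly Hamiltonian'' with the vanishing of its particular cocycle, leaving implicit that one cannot repair the momentum map by adding constants, so your step closes that small gap. Second, your passage from the vanishing of the cocycle to $B=c\Id$ goes through the $\Omega^0$-adjoint and the eigenspace decomposition $V_+\oplus V_-$, whereas the paper argues more directly: the condition $\Omega^0(BP,BQ)=\Omega^0(P,Q)$ plus $B^2=\Id$ gives $\Omega^0(BP,Q)=\Omega^0(P,BQ)$, and then (\ref{rel*}) collapses to $\Omega^0\bigl((\Id-cB)P,Q\bigr)=0$, hence $B=c\Id$ by nondegeneracy; your route is longer but valid, the paper's is shorter. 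The final identification of $\h_{c\Id,c}$ with $\h_{2n-1}\oplus\R D$, $D$ acting with weights $1$ and $2$, matches the paper's $\mathcal{K}_{2n}$, and your appeal to simple connectedness (both $H_{c\Id,c}\cong M_{Ao}\cong\R^{2n}$ and $K_{2n}$ are simply connected) correctly upgrades the Lie algebra isomorphism to the group statement.
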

\begin{proof} 
The contraction of the  $2$-form $\omega = dy^0\wedge d\gamma+ \frac12
\sum_{1\le a,b \le 2(n-1)} \Omega^0_{ab}\, dy^a \wedge dy^b$ on $M_{Ao}$ by
the fundamental vector field $\left( K_{B,c}(p,P,p')\right)^{*M_{Ao}}$ is
the differential of a function $f_{B,c}^{(p,P,p')}$
\begin{eqnarray*}
\iota\left(\left( K_{B,c}(p,P,p')\right)^{*M_{Ao}}\right)\omega
&=&pd y^0-p'e^{2c\gamma}d\gamma\\
&&-\left(\Omega^0(P,Y)\sh \gamma+\Omega^0(BP,Y)\ch \gamma \right) d \gamma\\
&&-\sum_{a,b=1}^{2(n-1)}\left( P^a \ch \gamma + (BP)^a 
\sh \gamma \right)\Omega^0_{ab} dy^b\\
&=& d f_{B,c}^{(p,P,p')}
\end{eqnarray*}
with
\[
f_{B,c}^{(p,P,p')}(y^0,Y,\gamma)
:=p y^0-\frac{1}{2c}p'e^{2c\gamma}-\Omega^0(P,Y)\ch \gamma-\Omega^0(BP,Y)\sh \gamma.
\]
The Lie algebra structure on $\h_{B,c}$ is given by
\[
\left[K_{B,c}(p,P,p'),K_{B,c}(q,Q,q')\right]
=K_{B,c}(0,qBP-pBQ,-2c(pq'-qp')+\Omega^0(BP,Q)+\Omega^0(P,BQ))
\]
  and 
\begin{eqnarray*}
\left(K_{B,c}(p,P,p')\right)^{*M_{Ao}}\left( f_{B,c}^{(q,Q,q')}\right)
&=&f_{B,c}^{(0,qBP-pBQ,-2c(pq'-qp')+\Omega^0(BP,Q)+\Omega^0(P,BQ))}\\
&&\qquad +\frac{1}{2}\left( \Omega^0(BP,BQ)-\Omega^0(P,Q)\right)
\end{eqnarray*}
so the action is strongly Hamiltonian if and only if
\begin{equation}\label{Bsympl}
\Omega^0(BP,BQ)=\Omega^0(P,Q) \qquad \forall P,Q;
\end{equation}
since $B^2=\Id$, equation(\ref{Bsympl}) also implies
$\Omega^0(BP,Q)=\Omega^0(P,BQ)$; the relation (\ref{rel*})
$\Omega^0((B-c\Id)P,(B-c\Id)Q)=0$, becomes
$2\Omega^0(P,Q)-2c\Omega^0(BP,Q)=0$ and this is equivalent to
\[
B=c\Id.
\]
Note that the Lie algebra structure of $\h_{c\Id,c}$ with $c^2=1$ is
\[
\left[K_{c\Id,c}(p,P,p'),K_{c\Id,c}(q,Q,q')\right]
=K_{c\Id,c}\left(0,c\left(qP-pQ\right),2c\left(qp'-q'p+\Omega^0(P,Q)\right)\right).
\]
Hence the derived algebra $\h'_{c\Id,c}=[\h_{c\Id,c},\h_{c\Id,c}]$ is
isomorphic to the Heisenberg algebra $\h_{2n-1}$ in dimension $2n-1$
\[
\h'_{c\Id,c}\simeq\left\{ (P,p')\,\vert\, P\in \R^{2(n-1)},p'\in\R\right\} 
\ \mathrm{with}\  [(P,p'),(Q,q')]=\left(0,2c\Omega^0(P,Q)\right)\simeq\h_{2n-1},
\]
and 
\[
\h_{c\Id,c}=\h'_{c\Id,c}\oplus \R D \ \ \mathrm{with}\ \ D \left( (P,p')\right)
:=(-cP,-2cp')\simeq \mathcal{K}_{2n}.
\]
\end{proof}
%***********************************************************************%

%%%
%%%%%
%%%%% To add a Table of Contents line for references uncomment next line
%%%%%
%%%%\addcontentsline{toc}{section}{\protect\numberline{}References}
%%%
\providecommand{\bysame}{\leavevmode\hbox
to3em{\hrulefill}\thinspace}
\providecommand{\MR}{\relax\ifhmode\unskip\space\fi Maths Reviews: }
% \MRhref is called by the amsart/book/proc definition of \MR.
\providecommand{\MRhref}[2]{%
  \href{http://www.ams.org/mathscinet-getitem?mr=#1}{#2}
} \providecommand{\href}[2]{#2}

\end{document}